\documentclass[onefignum,onetabnum]{siamart190516}



\usepackage{lipsum}
\usepackage{amsfonts}
\usepackage{graphicx}
\usepackage{epstopdf}
\usepackage{algorithmic}
\usepackage{booktabs,subfigure} 
\usepackage{enumitem}
\ifpdf
  \DeclareGraphicsExtensions{.eps,.pdf,.png,.jpg}
\else
  \DeclareGraphicsExtensions{.eps}
\fi


\newsiamremark{remark}{Remark}
\newsiamremark{hypothesis}{Hypothesis}
\crefname{hypothesis}{Hypothesis}{Hypotheses}
\newsiamthm{claim}{Claim}

\headers{An Entropic Method for Discrete Systems with Gibbs Entropy}{Z. Cai, J. Hu, Y. Kuang, and B. Lin}

\title{An Entropic Method for Discrete Systems with Gibbs Entropy\thanks{Submitted to the editors DATE.
\funding{ZC's work was funded by the Academic Research Fund of the Ministry of Education of Singapore under grant Nos.~R-146-000-305-114 and R-146-000-326-112. JH's research is partially supported by NSF CAREER grant DMS-1654152.}}}

\author{Zhenning Cai\thanks{Department of Mathematics, National University of Singapore, Singapore 119076 
  (\email{matcz@nus.edu.sg}).}
\and Jingwei Hu\thanks{Department of Mathematics, Purdue University, West Lafayette, IN 47907, USA
  (\email{jingweihu@purdue.edu}).}
\and Yang Kuang\thanks{Department of Mathematics, National University of Singapore, Singapore 119076 
  (\email{matkuan@nus.edu.sg}).}
\and Bo Lin\thanks{Department of Mathematics, National University of Singapore, Singapore 119076 
	(\email{matbl@nus.edu.sg}).}}

\usepackage{amsopn}


\ifpdf
\hypersetup{
  pdftitle={An Entropic Method for Discrete Systems with Gibbs Entropy},
  pdfauthor={Z. Cai, J. Hu, Y. Kuang, and B. Lin}
}
\fi

\makeatletter
\newcommand*{\addFileDependency}[1]{
  \typeout{(#1)}
  \@addtofilelist{#1}
  \IfFileExists{#1}{}{\typeout{No file #1.}}
}
\makeatother



\newcommand\boldf{\boldsymbol{f}}

\newcommand\boldC{\boldsymbol{C}}
\newcommand\boldone{\boldsymbol{1}}
\newcommand\boldg{\boldsymbol{g}}
\DeclareMathOperator*{\argmin}{argmin}
\newtheorem{example}{Example}

\begin{document}

\maketitle

\begin{abstract}
  We consider general systems of ordinary differential equations with monotonic Gibbs entropy, and introduce an entropic scheme that simply imposes an entropy fix after every time step of any existing time integrator. It is proved that in the general case, our entropy fix has only infinitesimal influence on the numerical order of the original scheme, and in many circumstances, it can be shown that the scheme does not affect the numerical order. Numerical experiments on the linear Fokker-Planck equation and nonlinear Boltzmann equation are carried out to support our numerical analysis.
\end{abstract}

\begin{keywords}
  Gibbs entropy, entropic schemes, numerical accuracy
\end{keywords}

\begin{AMS}
  65L05
\end{AMS}

\section{Introduction}
The second law of thermodynamics, discovered more than 170 years ago, states that the direction of the thermodynamic processes is driven by a physical quantity called entropy. The importance of this law cannot be overstated, and nearly every thermodynamic model has to respect such a property. Mathematically, there are a number of formulas to represent the entropy, among which the Gibbs entropy, formulated as the integral of $f \log f$ with $f$ being the distribution function of the states, is widely used in a variety of models such as the heat equation, the Boltzmann equation, and the Fokker-Planck equation. In our discussion, we assume a finite number of states, so that the Gibbs entropy is defined by
\begin{displaymath}
	\eta(\boldf) = \sum_{i=1}^N f_i \log f_i \Delta v_i,
\end{displaymath}
where $\boldf = (f_1, \ldots, f_N)^T \in \mathbb{R}_+^N$ describes the distribution of the $N$ states and $\Delta v_i$ represents the weight of the $i$th state. The vector $\boldf$ is a vector function of time $t$, and we assume that it satisfies the initial value problem
\begin{equation} \label{eq:assumesystem}
    \begin{aligned}
    &\frac{\mathrm{d}f_i(t)}{\mathrm{d}t} = Q_i(\boldf(t)), &i = 1,\dots, N, \\
    &f_i(0) = f_i^0, &
    \end{aligned}
\end{equation}
with the following properties:
\begin{enumerate}[label=(P\arabic*)]
    \item \label{po1} conservation of mass: $\frac{\mathrm{d}}{\mathrm{d}t} \sum_{i=1}^N f_i(t) \Delta v_i = 0$;
    \item \label{po2} nonnegativity: $f_i(t) \geq 0, \ \forall 1 \leq i \leq N, t \geq 0$;
    \item \label{po3} monotonicity of entropy: $\frac{\mathrm{d}}{\mathrm{d}t} \sum_{i=1}^N f_i(t) \log f_i(t) \Delta v_i \leq 0$.
\end{enumerate}
The ODE system of the form (\ref{eq:assumesystem}) appears frequently after discretizing the thermodynamic equations in space. For example, it may arise from the finite difference discretization of the heat equation and the Fokker-Planck type equation \cite{degond1994entropy, Buet2006numerical,Pareschi2018structure, Chow2019entropy}. It may also result from the discrete velocity method and the entropic Fourier method for the Boltzmann equation \cite{Goldstein1989investigations, Cai2018entropic}. 

Although the semi-discrete scheme (\ref{eq:assumesystem}) decays entropy, there is no guarantee that this property will carry over when time is discretized. In some special cases, the entropy decay can be proved for the fully discrete scheme, see for instance \cite{BCH20}, yet it often comes at a price of using implicit schemes and is highly problem and scheme dependent. Given the importance of entropy in thermodynamic processes, it would be desirable to have a fully discrete entropic scheme that is generic (e.g., does not require a specific type of time discretization) as well as easily implementable (e.g., does not require expensive nonlinear iterations). 


To bridge the above gap, we introduce an entropic scheme in this paper to achieve the following: one can apply any time discretization to the system \eqref{eq:assumesystem} as long as it maintains the mass conservation and nonnegativity of the solution. After each time step, if the entropy goes in the wrong direction, we provide a simple fix to make it decay monotonically. Such a fix is done by a weighted average of the current solution and the solution with maximum entropy. Via numerical analysis, we show that such a fix has only a tiny effect on the order of accuracy, and in various cases, it can be proven that the order of accuracy is not affected at all. Numerical experiments on the linear Fokker-Planck equation and nonlinear Boltzmann equation will also be carried out to support our findings.

The paper is organized as follows. In \cref{sec:main}, we first outline the procedure of our entropic method and summarize the main theorems of the method. The detailed proof of the theorems with some deeper understandings is illustrated in \cref{sec:entropy}. \cref{sec:experiments} provides the numerical experiments, and the conclusion follows in \cref{sec:conclusions}.


\section{Main results}
\label{sec:main}

This section outlines the overall procedure of our entropic method and lists the main results of our numerical analysis. Before stating our theorems, we 
introduce the notations and review some basic properties of the Gibbs entropy.

\subsection{Brief review of Gibbs entropy}


Due to the conservation hypothesis \ref{po1}, below we focus on the entropy functional defined by
\begin{displaymath}
   H(\boldf) = \sum_{i=1}^N (f_i \log f_i - f_i)\Delta v_i:= \sum_{i=1}^N h(f_i) \Delta v_i ,
\end{displaymath}
with $h(x) = x \log x -x $. Note that $H(\boldf)$ differs from $\eta(\boldf)$ only by a constant.

Let $\boldC = (C, \dots, C)^T \in \mathbb{R}_+^N$ with
\begin{equation} \label{eq:mean}
	C = \frac{\sum_{i=1}^{N} f_i \Delta v_i}{\sum_{i=1}^{N} \Delta v_i}.
\end{equation}
We denote by $\tilde{\boldf} = {\boldf}/{C} =  (\tilde{f}_1, \dots, \tilde{f}_N)^T$ the normalized $\boldf$, then it can be checked that
\begin{equation} \label{eq:relative}
	C \eta (\tilde{\boldf}) = H(\boldf) - H(\boldC).
\end{equation}
Furthermore, we define the $L^p$ ($p=1,2$) norm and $L^{\infty}$ norm of any $\boldf$ as
\begin{displaymath}
	\| \boldf \|_p  = \left( \sum_{i=1}^N  f_i^p \Delta v_i \right)^{1/p}, \qquad \| \boldf \|_{\infty} = \max_{i} |f_i|.
\end{displaymath}

\begin{lemma} \label{lm:cminf}
	$\boldC$ is the unique global minimum point of $H(\boldf)$ for all $\boldf \in \mathbb{R}_+^N$ satisfying \cref{eq:mean} with fixed $C$. 
\end{lemma}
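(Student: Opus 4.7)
The plan is to exploit the strict convexity of the single-variable function $h(x) = x\log x - x$ on $\mathbb{R}_+$, together with the fact that the constraint $\sum_i f_i \Delta v_i = C \sum_i \Delta v_i$ is linear. I would offer two equivalent routes and pick whichever is cleaner to state.

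\textbf{Route 1 (Jensen's inequality).} First I would observe that $h''(x) = 1/x > 0$ on $\mathbb{R}_+$, so $h$ is strictly convex. Interpreting $\Delta v_i / \sum_j \Delta v_j$ as a probability mass function on $\{1,\dots,N\}$, Jensen's inequality gives
\begin{displaymath}
  h\!\left(\frac{\sum_{i=1}^N f_i\, \Delta v_i}{\sum_{j=1}^N \Delta v_j}\right) \;\leq\; \frac{\sum_{i=1}^N h(f_i)\, \Delta v_i}{\sum_{j=1}^N \Delta v_j},
\end{displaymath}
with equality if and only if all $f_i$ are equal (by strict convexity). The left-hand argument is precisely $C$ from \cref{eq:mean}, so after multiplying through by $\sum_j \Delta v_j$ and recognizing that the left-hand side becomes $\sum_i h(C)\Delta v_i = H(\boldC)$, we obtain $H(\boldC) \leq H(\boldf)$, with equality iff $\boldf = \boldC$. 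This gives both existence and uniqueness of the global minimum in one shot.

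\textbf{Route 2 (Lagrange multipliers / direct computation).} Alternatively, I would set up the constrained optimization $\min H(\boldf)$ subject to $\sum_i f_i \Delta v_i = C\sum_i \Delta v_i$ with $f_i>0$, form the Lagrangian $L(\boldf,\lambda) = \sum_i h(f_i)\Delta v_i - \lambda(\sum_i f_i \Delta v_i - C\sum_i \Delta v_i)$, and compute $\partial L/\partial f_i = (h'(f_i) - \lambda)\Delta v_i = (\log f_i - \lambda)\Delta v_i = 0$, yielding $f_i = e^\lambda$ independent of $i$. Plugging this constant into the constraint forces $e^\lambda = C$, so $\boldf = \boldC$ is the unique critical point in the positive orthant. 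Global minimality and uniqueness then follow from strict convexity of $H$ on the affine constraint set, and boundary behavior ($h(x) \to 0$ as $x\to 0^+$ and $h(x)\to \infty$ as $x\to\infty$) rules out escape to the closure of $\mathbb{R}_+^N$.

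I would favor Route 1, since it is a two-line argument once Jensen is invoked and makes the equality case transparent. There is no real obstacle: the only point one must be slightly careful about is that strict convexity of $h$ gives equality in Jensen precisely when $f_i$ is the same for every index $i$ with $\Delta v_i>0$, which (assuming all weights $\Delta v_i$ are positive, as is standard in the quadrature interpretation) is exactly the condition $\boldf = \boldC$.
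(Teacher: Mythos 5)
Your Route 1 is correct and is exactly the argument the paper has in mind: the paper proves \cref{lm:cminf} by Jensen's inequality applied to the (strictly) convex integrand, which is the same computation you carry out, including the equality case that gives uniqueness. Route 2 is a valid alternative but unnecessary here.
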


%

The proof of \cref{lm:cminf} can be done by the concavity of $\log(x)$ and Jensen's inequality. Furthermore, a straightforward corollary of \cref{lm:cminf} is that, $\boldsymbol{1}=(1, \dots, 1)^T \in \mathbb{R}_+^N$ is the unique global minimum point of $\eta(\tilde{\boldf})$ for all $\tilde{\boldf} \in \mathbb{R}_+^N$ satisfying $\| \tilde{\boldf} \|_1= \| \boldsymbol{1} \|_1$. To ease the notation, we use $ \| \boldone \|_{1} = \sum_{i=1}^N \Delta v_i = V$ to denote the volume. 

The notations hereafter will be focused on the relative entropy $\eta(\tilde{\boldf})$ and the normalized $\tilde{\boldf}$ for fixed $C$. One could find its relationship to entropy function $H(\cdot)$ from \cref{eq:relative}. For simplicity, we would like to omit the tilde symbol in $\tilde{\boldf}$, and thus the average of the components of $\boldf$ will be $1$ hereafter. 

\subsection{Main results}
We assume after temporal discretization of \cref{eq:assumesystem}, the properties \ref{po1} and \ref{po2} can be preserved. Specifically, if we let $\boldf^n\geq 0$ be the numerical solution at the $n$th time step, then we have
\begin{enumerate}[label=(H\arabic*)]
    \item \label{hypo1} conservation: $\sum_{i=1}^N f_i^{n+1} \Delta v_i = \sum_{i=1}^N f_i^n \Delta v_i$,
    \item \label{hypo2} nonnegativity: $f_i^{n+1} \geq 0, \ \forall 1 \leq i \leq N$.
\end{enumerate}
We would like to
design an entropic method such that it can fulfill a discrete version of \ref{po3} while keeping \ref{hypo1} and \ref{hypo2}. 

Our numerical scheme is based on imposing a simple entropy fix after computing the numerical solution at every time step. Suppose that $\boldf^{n+1}$ is computed through evolving $\boldf^n$ by one time step. If $\eta(\boldf^{n+1}) \leq \eta(\boldf^n)$, nothing needs to be done.  Otherwise, we revise the solution at the $(n+1)$th time step as
\begin{equation} \label{eq:fixedsol}
	\hat{\boldf}^{n+1} = \boldf^{n+1} + \beta_p ( \boldone - \boldf^{n+1}),
\end{equation}
where $\beta_p \in (0,1]$ is chosen to satisfy
\begin{equation} \label{eq:enfixpratic}
	\eta(\boldf^{n+1} + \beta_p( \boldone - \boldf^{n+1})) = \eta (\boldf^n).
\end{equation}
This guarantees that the entropy is always non-increasing.

In most cases, such a method stabilizes the solution since it reduces both the Gibbs entropy and the $2$-norm of vectors. Therefore we are mainly concerned about the magnitude of the fixing term $\beta_p (\boldone - \boldf^{n+1})$, and we hope that this term does not affect the numerical convergence order of the original scheme. Generally, the error estimation of this scheme can be analyzed in the following manner
\begin{equation} \label{eq:error}
    \begin{aligned}
        \| \hat{\boldf}^{n+1} - \boldf(t_{n+1})\| & \leq \| \hat{\boldf}^{n+1} - \boldf^{n+1} \| + \| \boldf^{n+1} - \boldf(t_{n+1}) \| \\
        & \leq \| \hat{\boldf}^{n+1} - \boldf^{n+1} \| + \| \boldf^{n+1} - \tilde{\boldf}(t_{n+1}) \| + \| \tilde{\boldf}(t_{n+1}) - \boldf(t_{n+1}) \|,
    \end{aligned}
\end{equation}
where $\tilde{\boldf}(t)$ is the solution of the problem
\begin{equation} \label{eq:local_ode}
    \begin{aligned}
    &\frac{\mathrm{d}\tilde{f}_i(t)}{\mathrm{d}t} = Q_i(\tilde{\boldf}(t)), &i = 1,\dots, N, \\
    &\tilde{f}_i(t_n) = f_i^n, &i = 1,\dots,N,
    \end{aligned}
\end{equation}
and hence $\|\boldf^{n+1} - \tilde{\boldf}(t_{n+1})\|$ is the ``one-step error'' of the scheme. The last term in \cref{eq:error} is usually controlled by the stability of the ODE problem with respect to the initial condition.
If we assume that the scheme satisfies the following consistency condition:
\begin{displaymath}
\| \boldf(t_{n+1}) - \boldf^{n+1} \| \leq O(\Delta t ^{s+1}),
\end{displaymath}
then the original scheme (before our entropy fix) is a scheme of order $s$. Here our purpose is to demonstrate that the first term in the second line of \cref{eq:error}, i.e., $\|\beta_p (\boldone - \boldf^{n+1})\|$, can be controlled by the second term $\|\tilde{\boldf}(t_{n+1}) - \boldf^{n+1}\|$. In the ideal case, we may find a constant $C$ such that 
\begin{displaymath}
    \|\beta_p (\boldone - \boldf^{n+1})]\| \leq C \|\tilde{\boldf}(t_{n+1}) - \boldf^{n+1}\|,
\end{displaymath}
then the numerical convergence order is not affected. Hereafter, for simplicity, we would like to omit the tilde and use $\boldf(t_{n+1})$ to denote the solution of \cref{eq:local_ode} at time $t_{n+1}$. In other words, we assume that the solution at the $n$th time step $\boldf^n$ is exact ($\boldf(t_n) = \boldf^n$), so that $\boldf(t_{n+1})$ becomes identical to $\tilde{\boldf}(t_{n+1})$.



In the following theorems, we will study a stronger result
\begin{equation} \label{eq:enfix}
	\eta(\boldf^{n+1} + \beta( \boldone - \boldf^{n+1})) = \eta(\boldf(t_{n+1})),
\end{equation}
where $\beta_p$ in \cref{eq:enfixpratic} is replaced by $\beta$ and the solution at $(n+1)$th time step is revised to possess the same entropy as $\boldf(t_{n+1})$.
Due to $\eta(\boldf(t_{n+1})) \leq \eta(\boldf^n)$ and the monotonicity of $\eta(\boldf^{n+1} + \omega ( \boldone - \boldf^{n+1}))$ with respect to $\omega$,  we see that $\beta_p \leq \beta$.
Therefore, it suffices to show that $\| \beta  ( \boldone - \boldf^{n+1})\|$ can be controlled by the difference between $\boldf(t_{n+1})$ and $\boldf^{n+1}$.
Based on the commonly-used $2$-norm of vectors, we are going to prove this type of results in four different scenarios, which will be stated in the four theorems listed below.

In the first case, we have no assumptions on the structure of the solution, which may lead to a slight reduction of the numerical convergence order:

\begin{theorem} \label{thmlog}
	Given a positive and conservative numerical scheme, i.e., $\boldf^{n+1} \in \mathbb{R}_+^N$ and $\| \boldf^{n+1}\|_1 = \| \boldf(t_{n+1})\|_1$. When $\eta(\boldf^{n+1}) > \eta(\boldf^n)$ and \cref{eq:enfix} are satisfied, if $\| \boldf(t_{n+1}) - \boldf^{n+1} \|_2 \leq 1$, then
	
	\begin{displaymath}
		\| \beta  ( \boldone - \boldf^{n+1})\|_2 \leq M \| \boldf(t_{n+1}) - \boldf^{n+1} \|_2 \left( 1 + \left| \log \left( \| \boldf(t_{n+1}) - \boldf^{n+1} \|_2 \right) \right| \right),
	\end{displaymath}
	where $M>0$ is a constant which depends on $V$, $\| \boldf^{n+1} \|_{\infty}$ and $\| \boldf(t_{n+1})\|_{\infty}$.
\end{theorem}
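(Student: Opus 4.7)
The plan is to parametrize the segment from $\boldf^{n+1}$ to $\boldone$ by $\phi(\omega) := \eta(\boldf^{n+1} + \omega(\boldone - \boldf^{n+1}))$, $\omega \in [0,1]$, so that with $\rho := \|\boldone - \boldf^{n+1}\|_2$ the quantity to be bounded is precisely $\beta\rho$. Since $\phi$ is smooth and strictly convex on $[0,1]$, with $\phi(1) = 0$ and $\phi'(1) = 0$ (the latter because $\sum_i(1 - f_i^{n+1})\Delta v_i = 0$ by mass conservation), and is therefore strictly decreasing on $[0,1]$, the entropy-fix equation $\phi(\beta) = \eta(\boldf(t_{n+1}))$ has a unique root $\beta \in (0,1]$. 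I would then sandwich the entropy drop $D := \phi(0) - \phi(\beta) = \eta(\boldf^{n+1}) - \eta(\boldf(t_{n+1}))$: from above by a quantity of order $\epsilon(1 + |\log \epsilon|)$ using the modulus of continuity of $x \mapsto x\log x$, and from below by a quantity involving $\beta\rho$ using convexity and a Taylor expansion of $\eta$ about $\boldone$.

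For the upper bound on $D$, first I would prove the elementary pointwise estimate $|a\log a - b\log b| \leq C(M)|a-b|(1 + |\log|a-b||)$ for $a, b \in [0,M]$ with $|a-b| \leq 1$, obtained by the mean value theorem when $\min(a,b) \geq |a-b|$ and by direct estimation of $|x\log x|$ on $[0, 2|a-b|]$ otherwise. Applying this termwise to $D = \sum_i(f_i^{n+1}\log f_i^{n+1} - f_i(t_{n+1})\log f_i(t_{n+1}))\Delta v_i$ reduces the task to bounding the $\Delta v_i$-weighted sums of $\epsilon_i$ and of $\epsilon_i|\log\epsilon_i|$, where $\epsilon_i := |f_i^{n+1} - f_i(t_{n+1})|$. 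The linear part is handled by Cauchy-Schwarz, giving $\sum_i \epsilon_i \Delta v_i \leq V^{1/2} \epsilon$. The logarithmic part is handled by Jensen's inequality applied to the concave map $t \mapsto -t\log t$ on $(0,1]$, converting $\sum_i \epsilon_i|\log\epsilon_i|\Delta v_i$ into an expression controlled by $\bar\epsilon |\log \bar\epsilon|$ with $\bar\epsilon \leq V^{-1/2}\epsilon$. Together these yield $D \leq C_1 \epsilon(1 + |\log \epsilon|)$ with $C_1$ depending only on $V$, $\|\boldf^{n+1}\|_\infty$, and $\|\boldf(t_{n+1})\|_\infty$.

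For the lower bound, convexity of $\eta$ applied to $\hat\boldf = (1-\beta)\boldf^{n+1} + \beta\boldone$, together with $\eta(\boldone) = 0$, gives $\phi(\beta) \leq (1-\beta)\phi(0)$, so $D \geq \beta \eta(\boldf^{n+1})$. A second-order Taylor expansion of $\eta$ around $\boldone$, whose linear term vanishes by mass conservation, then produces $\eta(\boldf^{n+1}) \geq \rho^2/(2M')$ with $M' = \max(1, \|\boldf^{n+1}\|_\infty)$; combining, $\beta \rho^2 \leq 2 M' C_1 \epsilon(1 + |\log \epsilon|)$. The main obstacle lies in the final step: taken naively, this estimate together with the trivial $\beta\rho \leq \rho$ delivers only $\beta\rho \lesssim \sqrt{\epsilon(1 + |\log\epsilon|)}$, which is weaker than the linear bound claimed. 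I expect the resolution to require sharpening the lower bound on $D$ through the componentwise inequality $(x-1)\log x \geq c(M)(x-1)^2$ applied to $-\phi'(0) = \sum_i (f_i^{n+1} - 1)\log f_i^{n+1}\Delta v_i$: components of $\boldf^{n+1}$ that are close to $0$ contribute an extra factor of order $|\log f_i^{n+1}|$ to $-\phi'(0)$, and a careful case split on whether $\rho$ is small (in which case $\beta\rho \leq \rho$ suffices) or large (in which case one exploits the boosted $-\phi'(0)$) should balance the $|\log \epsilon|$ on the right-hand side and upgrade the square-root estimate to the linear bound of the theorem.
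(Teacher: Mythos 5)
There is a genuine gap, and it sits exactly where you flagged it — but your proposed repair points in the wrong direction. The square-root obstruction arises because your upper bound on the entropy drop, $D \leq C_1\,\epsilon(1+|\log\epsilon|)$, discards the fact that $h'$ is small near $1$. The estimate that actually closes the argument is an upper bound of the form
\begin{displaymath}
D \;\leq\; C\,(1+|\log \epsilon|)\,\|\boldf^{n+1}-\boldf(t_{n+1})\|_2\,\bigl(\|\boldf^{n+1}-\boldone\|_2+\|\boldf(t_{n+1})-\boldone\|_2\bigr),
\end{displaymath}
i.e.\ it must carry an extra factor of $\rho=\|\boldf^{n+1}-\boldone\|_2$ (the second factor is comparable to $\rho$ by \cref{lm:horder}). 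This is what the paper's \cref{lm:smallhdiff,lm:hdiff} provide via the pointwise inequality $|h(x)-h(y)|\leq M|x-y|(|x-1|+|y-1|)$ followed by Cauchy--Schwarz. Combined with the lower bound $D\geq \beta\,\eta(\boldf^{n+1})\gtrsim \beta\rho^2$ — which you have correctly — one power of $\rho$ cancels and $\beta\rho\lesssim \epsilon(1+|\log\epsilon|)$ follows directly, with no case split. Your termwise bound $|a\log a-b\log b|\leq C|a-b|(1+|\log|a-b||)$ is true but strictly weaker, and no amount of sharpening the \emph{lower} bound can compensate: $\eta(\boldf^{n+1})\gtrsim\rho^2$ is already tight (take $\boldf^{n+1}$ close to $\boldone$, where $\eta(\boldf^{n+1})\approx\rho^2/2$ and the components are nowhere near $0$, so the "boost from $|\log f_i^{n+1}|$" you invoke is absent). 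Your case split also fails quantitatively: in the regime $\rho\gg\epsilon(1+|\log\epsilon|)$ your two bounds give only $\beta\rho\leq \min(\rho,\,C\epsilon(1+|\log\epsilon|)/\rho)\leq C'$, a constant rather than something of order $\epsilon|\log\epsilon|$.

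For comparison, the paper obtains the $|\log\epsilon|$ factor by a different mechanism than your modulus-of-continuity argument: it first regularizes, setting $\boldf^{n+1,1}=\boldf^{n+1}+\epsilon(\boldone-\boldf^{n+1})$ so that all components are bounded below by $\epsilon$ at an $L^2$ cost of $O(\epsilon)$, then applies the machinery of \cref{thm1} with $C_0=\epsilon$, whose constant is $\max(2,2|\log C_0|)=O(|\log\epsilon|)$, and finally assembles the pieces by the triangle inequality. If you want to salvage your outline, replace your pointwise estimate by one of the form $|h(x)-h(y)|\leq C(1+|\log C_0|)|x-y|(|x-1|+|y-1|)$ valid when $x\geq C_0$ (or when $h(x)>h(y)$ and $x\geq C_0$), apply it after the regularization step, and keep the rest of your sandwich; the convexity lower bound $D\geq\beta\eta(\boldf^{n+1})$ and the Taylor bound $\eta(\boldf^{n+1})\geq\rho^2/(2\|\boldf^{n+1}\|_\infty)$ are exactly the paper's \cref{thm1:ratio} and \cref{lm:h=twonorm}.
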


In this case, the right-hand side of the inequality contains a logarithmic term, which tends to infinity when $\|\boldf(t_{n+1}) - \boldf^{n+1}\|_2$ approaches zero. However, for any $\epsilon > 0$, we have
\begin{displaymath}
    1 + \left| \log \left( \| \boldf(t_{n+1}) - \boldf^{n+1} \|_2 \right) \right| < \|\boldf(t_{n+1}) - \boldf^{n+1}\|_2^{-\epsilon}   
\end{displaymath}
when $\|\boldf(t_{n+1}) - \boldf^{n+1}\|_2$ is sufficiently small, meaning that the numerical convergence order is reduced only by an arbitrary small positive number. Nevertheless, we would still like to explore the conditions under which such a logarithmic term does not exist. The remaining three cases are related to this type of results.

Intuitively, the reason of the logarithmic term in \cref{thmlog} is the unboundedness of the function $h'(x)$ when $x$ is close to zero. In the following result, we assume that the components of the numerical solution $\boldf^{n+1}$ have a lower bound $C_0$, such that $h'(x)$ becomes bounded:

\begin{theorem} \label{thm1}
	Given a positive and conservative numerical scheme, i.e., $\boldf^{n+1} \in \mathbb{R}_+^N$ and $\| \boldf^{n+1}\|_1 = \| \boldf(t_{n+1})\|_1$. When $\eta(\boldf^{n+1}) > \eta(\boldf^n)$ and \cref{eq:enfix} are satisfied, if $f^{n+1}_i \geq C_0>0$ holds for all $1 \leq i \leq N$, then
	
	\begin{displaymath}
		\| \beta  ( \boldone - \boldf^{n+1})\|_2 \leq M \| \boldf(t_{n+1}) - \boldf^{n+1} \|_2,
	\end{displaymath}
	where $M>0$ is a constant which depends on $C_0$, $\| \boldf^{n+1} \|_{\infty}$ and $\| \boldf(t_{n+1})\|_{\infty}$.
\end{theorem}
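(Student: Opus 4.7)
The plan is to sandwich the entropy defect $E_1 := \eta(\boldf^{n+1}) - \eta(\boldf(t_{n+1}))$, which is strictly positive by hypothesis (since $\eta(\boldf(t_{n+1})) \leq \eta(\boldf^n) < \eta(\boldf^{n+1})$), between an \emph{upper} bound bilinear in $\|\boldone - \boldf^{n+1}\|_2$ and $\|\boldf(t_{n+1}) - \boldf^{n+1}\|_2$, and a \emph{lower} bound quadratic in $\|\boldone - \boldf^{n+1}\|_2$. One factor of $\|\boldone - \boldf^{n+1}\|_2$ then cancels, leaving exactly the claimed estimate.

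For the upper bound, I would first rewrite $\eta(\boldf) = \sum_i \psi(f_i)\Delta v_i$ with $\psi(f) := f\log f - f + 1$, which is valid because conservation gives $\sum_i (f_i - 1)\Delta v_i = 0$. The convexity of $\psi$ yields the componentwise tangent estimate $\psi(f_i^{n+1}) - \psi(f_i(t_{n+1})) \leq \log(f_i^{n+1})(f_i^{n+1} - f_i(t_{n+1}))$; summing with weights and invoking Cauchy--Schwarz gives $E_1 \leq \|\log \boldf^{n+1}\|_2 \|\boldf^{n+1} - \boldf(t_{n+1})\|_2$. The hypothesis $f_i^{n+1} \geq C_0$ is essential here: the mean value theorem applied to $\log$ on $[C_0, \|\boldf^{n+1}\|_\infty]$ gives $|\log f_i^{n+1}| \leq K_0 |f_i^{n+1}-1|$ with $K_0 := 1/\min(C_0,1)$, hence $\|\log \boldf^{n+1}\|_2 \leq K_0 \|\boldone - \boldf^{n+1}\|_2$, producing
\begin{equation*}
    E_1 \leq K_0 \|\boldone - \boldf^{n+1}\|_2 \|\boldf(t_{n+1}) - \boldf^{n+1}\|_2.
\end{equation*}

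For the lower bound, I would exploit convexity of $\eta$ along the segment from $\boldf^{n+1}$ to $\boldone$: since the fixed state is $(1-\beta)\boldf^{n+1} + \beta\boldone$ and $\eta(\boldone) = 0$, convexity yields $\eta(\boldf(t_{n+1})) \leq (1-\beta)\eta(\boldf^{n+1})$, hence $E_1 \geq \beta \eta(\boldf^{n+1})$. The ratio $\psi(f)/(f-1)^2$ is continuous and strictly positive on $[C_0, \|\boldf^{n+1}\|_\infty]$ (with removable limit $1/2$ at $f = 1$), so it attains a positive minimum $c_1$ depending only on $C_0$ and $\|\boldf^{n+1}\|_\infty$, giving $\eta(\boldf^{n+1}) \geq c_1 \|\boldone - \boldf^{n+1}\|_2^2$ and therefore $E_1 \geq c_1 \beta \|\boldone - \boldf^{n+1}\|_2^2$.

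Combining the two bounds and dividing by $\|\boldone - \boldf^{n+1}\|_2 > 0$ (nonzero because $\boldf^{n+1} = \boldone$ would force $\eta(\boldf^{n+1}) = 0 \leq \eta(\boldf(t_{n+1}))$, contradicting the trigger condition) gives $\beta \|\boldone - \boldf^{n+1}\|_2 \leq (K_0/c_1)\|\boldf(t_{n+1}) - \boldf^{n+1}\|_2$, so $M := K_0/c_1$ works and depends only on $C_0$ and $\|\boldf^{n+1}\|_\infty$. I expect the main obstacle to be identifying the correct asymmetric sandwich: bounding $\|\log \boldf^{n+1}\|_2$ by a constant (as would be natural from uniform bounds on $f_i^{n+1}$ alone) yields only a linear upper bound $E_1 \leq C_1 \|\boldf^{n+1}-\boldf(t_{n+1})\|_2$, which combined with the quadratic lower bound would produce $\beta \|\boldone - \boldf^{n+1}\|_2^2 \leq C_2 \|\boldf^{n+1}-\boldf(t_{n+1})\|_2$---a square-root loss in the order. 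The refined Lipschitz estimate $|\log f_i^{n+1}| \leq K_0 |f_i^{n+1}-1|$ supplies the additional factor of $\|\boldone - \boldf^{n+1}\|_2$ needed to cancel the analogous factor in the lower bound.
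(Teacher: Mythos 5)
Your proof is correct, and while it shares the paper's overall skeleton --- convexity of $\eta$ along the segment to $\boldone$ gives $\beta\,\eta(\boldf^{n+1}) \leq \eta(\boldf^{n+1}) - \eta(\boldf(t_{n+1}))$, the denominator is bounded below by $c\,\|\boldone-\boldf^{n+1}\|_2^2$ (your compactness argument for $\psi(f)/(f-1)^2$ is the content of \cref{lm:h=twonorm}, which even gives the explicit constant $1/(2\|\boldf^{n+1}\|_\infty)$ with no dependence on $C_0$), and the numerator is bounded above by something bilinear in $\|\boldone-\boldf^{n+1}\|_2$ and $\|\boldf^{n+1}-\boldf(t_{n+1})\|_2$ --- your treatment of the numerator is genuinely different and cleaner. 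The paper goes through the two-sided mean-value estimate of \cref{lm:smallhdiff}/\cref{lm:hdiff}, which requires a case analysis on the sign of $\log\xi$, and then needs \cref{lm:horder} to convert the resulting factor $\|\boldf(t_{n+1})-\boldone\|_2$ into $\|\boldf^{n+1}-\boldone\|_2$; this is why the paper's constant carries a dependence on $\|\boldf(t_{n+1})\|_\infty$. You instead use only the one-sided tangent inequality $\psi(x)-\psi(y)\leq \psi'(x)(x-y)$ at $x=f_i^{n+1}$ (which is all that is needed, since only an upper bound on the entropy defect is required), followed by Cauchy--Schwarz and the Lipschitz bound $|\log f_i^{n+1}|\leq |f_i^{n+1}-1|/\min(C_0,1)$ --- correctly identified as the step where the floor $C_0$ enters and where the extra factor of $\|\boldone-\boldf^{n+1}\|_2$ is harvested. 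This yields a constant independent of $\|\boldf(t_{n+1})\|_\infty$ and bypasses two of the paper's lemmas. The trade-off is purely organizational: the paper's \cref{lm:hdiff} and its corollary are reused in the proofs of \cref{thmlog,thm2,thm3} (where the two-sidedness and the restriction to components with $h(f_i^{(1)})>h(f_i^{(2)})$ matter), so the heavier machinery pays off later, whereas your argument is self-contained and optimal for this theorem alone.
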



The condition in this theorem disallows the numerical solution to be zero anywhere in the domain. In such a situation, if the scheme can guarantee the numerical convergence order for the $L^{\infty}$-error, we can still show that the $L^2$-norm of the entropy fix is small. This corresponds to our third case:
\begin{theorem} \label{thm2}
	Given a positive and conservative numerical scheme, i.e., $\boldf^{n+1} \in \mathbb{R}_+^N$ and $\| \boldf^{n+1}\|_1 = \| \boldf(t_{n+1})\|_1$. When $\eta(\boldf^{n+1}) > \eta(\boldf^n)$ and \cref{eq:enfix} are satisfied, if $\| \boldf(t_{n+1}) - \boldf^{n+1} \|_{\infty} \leq 1/3$, it holds that
	
	\begin{displaymath}
		\| \beta  ( \boldone - \boldf^{n+1})\|_2 \leq M \| \boldf(t_{n+1}) - \boldf^{n+1} \|_{\infty},
	\end{displaymath}
	where $M>0$ is a constant which depends on $V$, $\| \boldf^{n+1} \|_{\infty}$ and $\| \boldf(t_{n+1})\|_{\infty}$.
\end{theorem}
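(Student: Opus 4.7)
The plan is to establish $\beta \leq M_0\delta$, where $\delta := \|\boldf(t_{n+1}) - \boldf^{n+1}\|_\infty$, so that the conclusion follows by multiplying with $\|\boldone - \boldf^{n+1}\|_2 \leq \sqrt{V}(1+\|\boldf^{n+1}\|_\infty)$. Applying the mean value theorem to $G(s) := H((1-s)\boldf^{n+1} + s\boldone)$ on $[0,\beta]$, together with the entropy condition $H(\hat{\boldf}^{n+1}) = H(\boldf(t_{n+1}))$ implied by \cref{eq:relative} and \cref{eq:enfix}, yields
\begin{equation*}
  D := H(\boldf^{n+1}) - H(\boldf(t_{n+1})) = A(s^*)\,\beta, \qquad s^* \in (0,\beta),
\end{equation*}
with $A(s) = \sum_i (f_i^{n+1} - 1)\log \phi_i(s)\,\Delta v_i \geq 0$ and $\phi_i(s) = (1-s)f_i^{n+1} + s$. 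Thus $\beta = D/A(s^*)$, and the task reduces to bounding this ratio by $M_0\delta$.

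I would partition the indices into $I_L = \{i : f_i^{n+1} > 1/2\}$ and $I_S = \{i : f_i^{n+1} \leq 1/2\}$, and estimate $D$ and $A(s^*)$ along this partition. For $i \in I_L$, the $\delta \leq 1/3$ assumption forces $f_i(t_{n+1}) \geq 1/6$, so the relevant $\log$ values are bounded by constants depending only on $\|\boldf^{n+1}\|_\infty$ and $\|\boldf(t_{n+1})\|_\infty$; this produces an $O(\delta)$ contribution to $|D|$ per index, and the inequality $(x-1)\log x \geq (x-1)^2/\max(1,x)$ (as used in the proof of \cref{thm1}) yields a matching $\|\cdot\|_2^2$-type contribution to $A(s^*)$. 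For $i \in I_S$, both $f_i^{n+1}$ and $f_i(t_{n+1})$ are at most $5/6$; writing $h(x) - h(y) = \int_y^x \log s\,ds$ and controlling the integrand in terms of $\phi_i(s^*)$ yields $|h(f_i^{n+1}) - h(f_i(t_{n+1}))| \leq C\delta(1 + |\log\phi_i(s^*)|)$. Simultaneously, since $1 - f_i^{n+1} \geq 1/2$ and $\phi_i(s^*) \leq 3/4$ for such $i$ (assuming $s^* \leq 1/2$), the $I_S$-contribution to $A(s^*)$ is at least $\tfrac12 \sum_{i \in I_S} |\log \phi_i(s^*)|\Delta v_i$, so that the logarithmic factors in numerator and denominator cancel in the ratio $\beta = D/A(s^*)$.

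The main obstacle is to make this cancellation of logarithmic contributions rigorous. Since $\phi_i(s^*) \geq s^*$, if $\beta$ (and therefore $s^*$) were not small, $|\log \phi_i(s^*)|$ could be substantially smaller than the $|\log|$ factor in the upper bound on $D$, breaking the cancellation. A self-consistency step — first assuming $\beta \leq 1/2$ to control $\phi_i(s^*)$, then deriving $\beta \leq M_0\delta$ and checking that $M_0 \delta \leq 1/2$ is consistent with $\delta \leq 1/3$ — should close the estimate. The residual regime $\beta > 1/2$ can be treated directly: the identity $H(\hat{\boldf}^{n+1}) = H(\boldf(t_{n+1}))$ with $\hat{\boldf}^{n+1}$ close to $\boldone$ forces $\eta(\boldf(t_{n+1}))$ close to its minimum value, so by the strict convexity of $H$ and \cref{lm:cminf}, $\boldf(t_{n+1})$ (and via $\delta \leq 1/3$ also $\boldf^{n+1}$) is close to $\boldone$, giving $\|\boldone - \boldf^{n+1}\|_2 = O(\delta)$ directly so that the trivial bound $\beta \leq 1$ is already sufficient.
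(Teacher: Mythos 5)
Your reduction of the problem to bounding $\beta = D/A(s^\ast)$ via the mean value theorem is sound, and your treatment of the large components $I_L$ is fine, but the argument for the small components $I_S$ has a genuine gap that the proposed self-consistency step does not close. Write $\delta=\|\boldf(t_{n+1})-\boldf^{n+1}\|_\infty$. For $i\in I_S$ with $f_i^{n+1}$ very small, the contribution to $D$ is of size $\Delta v_i\,\delta\,(1+|\log \max(f_i^{n+1},\delta)|)$, and $|\log f_i^{n+1}|$ can be arbitrarily large, whereas the constant $M$ in the theorem may depend only on $V$ and the two sup norms. Your matching denominator term is $(1-f_i^{n+1})\,|\log\phi_i(s^\ast)|\,\Delta v_i$ with $\phi_i(s^\ast)\le f_i^{n+1}+s^\ast$, so the logarithms cancel only if $s^\ast\lesssim\max(f_i^{n+1},\delta)$ --- i.e., only if $\beta\lesssim\delta$ is already known. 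The a priori assumption $\beta\le 1/2$ yields merely $\phi_i(s^\ast)\le 3/4$ and hence the constant lower bound $|\log\phi_i(s^\ast)|\ge\log(4/3)$, which cannot absorb an unbounded $|\log f_i^{n+1}|$; and since $\beta$ is a ratio of sums rather than a sum of ratios (with some per-index denominators vanishing as $f_i^{n+1}\to 1$), one cannot pass to the worst per-index quotient either. The closing check ``$M_0\delta\le 1/2$ is consistent with $\delta\le 1/3$'' would moreover force $M_0\le 3/2$, which is not available. A secondary issue: in the residual regime $\beta>1/2$, your closeness-to-$\boldone$ argument needs $(1-\beta)\sqrt{2\|\boldf(t_{n+1})\|_\infty}<1$ to absorb $\|\boldf^{n+1}-\boldone\|_2$ after the triangle inequality, which fails for $\beta$ slightly above $1/2$ when $\|\boldf(t_{n+1})\|_\infty>2$.

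The paper sidesteps the logarithmic bookkeeping entirely via \cref{lm:infperturb}: the shift $\boldf^{n+1,1}=\boldf^{n+1}+3\delta(\boldone-\boldf^{n+1})$ pushes every component with $f_k^{n+1}<2/3$ past $f_k(t_{n+1})$ while keeping it in $[0,1]$, where $h$ is decreasing, so $h(f_k^{n+1,1})\le h(f_k(t_{n+1}))$ for all small components; these then contribute with the favorable sign and are discarded, and the residual entropy excess is carried entirely by components bounded below by $2/3$, to which \cref{lm5col1} and the argument of \cref{thm1} apply with $C_0=2/3$. To salvage your route you would need an analogous a priori shift of size $O(\delta)$ that dominates the small components before invoking the mean value theorem --- at which point the two proofs essentially coincide.
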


The last case we consider can be regarded as a generalization of \cref{thm1}. We allow the numerical solution to be small on some part of the domain, but require that the solution increases slowly. This will lead to a result similar to the conclusion of \cref{thm1}, where the $L^2$-magnitude of the entropy fix can be directly bounded by the $L^2$-error:

\begin{theorem} \label{thm3}
	Given a positive and conservative numerical scheme, i.e., $\boldf^{n+1} \in \mathbb{R}_+^N$ and $\| \boldf^{n+1}\|_1 = \| \boldf(t_{n+1})\|_1$, we denote the components of $\boldf^{n+1}$ as $f_1^{n+1} \leq f_2^{n+1} \leq \cdots \leq f_N^{n+1}$. For any $C_1, C_f \in (0, 1]$, there exists two positive constants $\delta$ and $M$, such that
	\begin{displaymath}
		\| \beta  ( \boldone - \boldf^{n+1})\|_2 \leq M \| \boldf^{n+1} - \boldf(t_{n+1}) \|_{2}
	\end{displaymath}
	if all the following conditions hold:
	\begin{itemize}
	\item $\eta(\boldf^{n+1}) > \eta(\boldf^n)$ and $\eta(\boldf^{n+1} + \beta(\boldone - \boldf^{n+1})) = \eta(\boldf(t_{n+1}))$;
	\item $\| \boldf^{n+1} - \boldf(t_{n+1}) \|_2 < \delta$;
	\item The index $I_1 = \min \{ I \mid \sum_{i=1}^I \Delta v_i \geq C_1 V \}$ satisfies
	\begin{equation} \label{eq:maxlogratio}
		\frac{1}{| \log \left( f_1^{n+1} \right) |} \geq \frac{C_f}{|\log \left( f_{I_1}^{n+1} \right)|}.
	\end{equation}
	\end{itemize}
	Here $\delta$ depends on $C_1$, $C_f$ and $V$, and $M$ depends on
	$C_1$, $C_f$, $V$, $\| \boldf^{n+1} \|_{\infty}$ and $\| \boldf(t_{n+1})\|_{\infty}$. 
\end{theorem}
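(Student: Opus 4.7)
The plan is to parallel \cref{thm1} while using condition~\eqref{eq:maxlogratio} to absorb the logarithmic singularity of $\log f_i^{n+1}$ where $\boldf^{n+1}$ is small. Let $g(\omega):=\eta(\boldf^{n+1}+\omega(\boldone-\boldf^{n+1}))$; by convexity of $x\log x$ and \cref{lm:cminf}, $g$ is convex on $[0,1]$ with $g'(1)=0$, hence non-increasing. The fix equation~\eqref{eq:enfix} reads $g(\beta)=\eta(\boldf(t_{n+1}))$, so setting $A:=g(0)-g(\beta)=\eta(\boldf^{n+1})-\eta(\boldf(t_{n+1}))>0$, the task reduces to bounding $\beta$ by a constant multiple of $\|\boldf^{n+1}-\boldf(t_{n+1})\|_2$.

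For the upper bound on $A$, convexity of $\eta$ combined with the conservation identity $\sum_i(f_i^{n+1}-f_i(t_{n+1}))\Delta v_i=0$ gives $A\le\sum_i\log(f_i^{n+1})(f_i^{n+1}-f_i(t_{n+1}))\Delta v_i$, and Cauchy--Schwarz then yields
\[
    A\le\|\log\boldf^{n+1}\|_2\,\|\boldf^{n+1}-\boldf(t_{n+1})\|_2.
\]
Using the sorted ordering, the piecewise monotonicity of $|\log\cdot|$, and~\eqref{eq:maxlogratio} to bound $|\log f_1^{n+1}|\le C_f^{-1}|\log f_{I_1}^{n+1}|$, I obtain $\|\log\boldf^{n+1}\|_2\le K_1(|\log f_{I_1}^{n+1}|+1)$ for a $K_1$ depending on $C_f$, $V$, and $\|\boldf^{n+1}\|_\infty$. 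For the matching lower bound, convexity of $g$ gives $A\ge-\beta g'(\beta)$, and every summand of $-g'(\beta)=\sum_i(-\log F_i(\beta))(1-f_i^{n+1})\Delta v_i$, where $F_i(\beta):=f_i^{n+1}+\beta(1-f_i^{n+1})$, is non-negative because $-\log F_i(\beta)$ and $1-f_i^{n+1}$ share signs. In the delicate case $f_{I_1}^{n+1}\le 1/4$ (the opposite case forces a uniform positive lower bound on every $f_i^{n+1}$ through~\eqref{eq:maxlogratio} and reduces to \cref{thm1}), the estimate $F_i(\beta)\le 2\max(f_{I_1}^{n+1},\beta)$ for $i\le I_1$ together with $\sum_{i\le I_1}\Delta v_i\ge C_1V$ yields $-g'(\beta)\ge K_2\,|\log(2\max(f_{I_1}^{n+1},\beta))|$ for a $K_2=K_2(C_1,V)$.

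The main obstacle is turning the combined inequality $\beta\,|\log(2\max(f_{I_1}^{n+1},\beta))|\lesssim(|\log f_{I_1}^{n+1}|+1)\|\boldf^{n+1}-\boldf(t_{n+1})\|_2$ into the clean statement $\beta\lesssim\|\boldf^{n+1}-\boldf(t_{n+1})\|_2$. When $\beta\le f_{I_1}^{n+1}$ the $|\log f_{I_1}^{n+1}|$ factors on both sides cancel directly. When $\beta>f_{I_1}^{n+1}$, restricting the integral representation $A=\int_0^\beta(-g'(s))\,ds$ to $[0,f_{I_1}^{n+1}]$ forces $f_{I_1}^{n+1}\lesssim\|\boldf^{n+1}-\boldf(t_{n+1})\|_2$, placing $f_{I_1}^{n+1}$ and $\beta$ on comparable scales with the error. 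A refined analysis---combining the alternative upper bound $A\le-\beta g'(0)\le K_3\beta|\log f_{I_1}^{n+1}|$, which follows from the same log-ratio argument applied at $\omega=0$, with the lower bound above---then closes the estimate. The threshold $\delta$ is chosen small enough, depending on $C_1,C_f,V,\|\boldf^{n+1}\|_\infty,\|\boldf(t_{n+1})\|_\infty$, to keep all quantities in this asymptotic regime. The conclusion then follows from $\|\beta(\boldone-\boldf^{n+1})\|_2\le\beta\sqrt{V}(1+\|\boldf^{n+1}\|_\infty)$.
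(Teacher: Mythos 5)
Your overall strategy---matching an upper bound and a lower bound for the entropy drop $A=\eta(\boldf^{n+1})-\eta(\boldf(t_{n+1}))$ via the convex scalar function $g(\omega)$---is genuinely different from the paper's route (which goes through the KKT characterization of the worst-case $L^2$-perturbation in \cref{lm: opt}, the decomposition $\eta=H_1+H_2+H_3$, and the compensation estimate of \cref{lemma:h2h1}). The lower bound $A\ge-\beta g'(\beta)\gtrsim C_1V\,\beta\,|\log(2\max(f_{I_1}^{n+1},\beta))|$ is sound and is in the right spirit. However, there is a genuine gap on the upper-bound side. The first-order convexity/Cauchy--Schwarz estimate $A\le\|\log\boldf^{n+1}\|_2\,\varepsilon$ with $\varepsilon=\|\boldf^{n+1}-\boldf(t_{n+1})\|_2$ is correct but far too lossy precisely in the regime the theorem is designed for: if $f_1^{n+1}\approx\dots\approx f_{I_1}^{n+1}\approx e^{-1/\varepsilon}$ (which satisfies \cref{eq:maxlogratio} with $C_f=1$), your upper bound reads $A\lesssim|\log f_{I_1}^{n+1}|\,\varepsilon\approx 1$, whereas the true worst case is only $A\lesssim\varepsilon(|\log\varepsilon|+1)$---the gradient $\log f_i^{n+1}$ wildly overestimates the secant slope of $h$ over a perturbation of size $\varepsilon$. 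Combined with your lower bound $A\gtrsim\beta|\log(2\beta)|$, this yields only $\beta\lesssim 1/|\log\varepsilon|$, not $\beta\lesssim\varepsilon$. The factor $|\log f_{I_1}^{n+1}|/|\log\varepsilon|$ is unbounded under the hypotheses, so the advertised ``cancellation of the logarithms'' fails exactly in the sub-case $\beta>f_{I_1}^{n+1}$ that you flag as delicate; the proposed refinement $A\le-\beta g'(0)\le K_3\beta|\log f_{I_1}^{n+1}|$ cannot close it, since an upper bound on $A$ proportional to $\beta$ gives no upper bound on $\beta$. Worse, the hypotheses explicitly allow $f_1^{n+1}=\dots=f_{I_1}^{n+1}=0$ (see \cref{thmex:2}), in which case $\|\log\boldf^{n+1}\|_2=\infty$ and the upper bound is vacuous from the start.

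What is missing is the sharp statement that an $L^2$-perturbation of size $\varepsilon$ can decrease $\sum_{i\le I_1}h(f_i^{n+1}+g_i)\Delta v_i$ by at most $O(\varepsilon(|\log\varepsilon|+1))$, and---more importantly for removing even that log---that the optimal such perturbation spreads over the whole measure-$C_1V$ set with components comparable up to the factor $C_f$, so that it is dominated by a single shift $\beta_1(\boldone-\boldf^{n+1})$ with $\beta_1=O(\varepsilon)$. This is exactly the content of \cref{lm: opt} and \cref{col: H1} in the paper, and it is where conditions $C_1$ and \cref{eq:maxlogratio} actually enter; your sketch uses \cref{eq:maxlogratio} only to compare $|\log f_1^{n+1}|$ with $|\log f_{I_1}^{n+1}|$ inside the lossy Cauchy--Schwarz bound, which is not enough. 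Without a replacement for this optimization step (and for the $H_2$-compensation of \cref{lemma:h2h1}, which handles the components between $f_{I_1}^{n+1}$ and $1/2$ that your two-case split does not address), the argument does not establish the theorem; at best it recovers a bound with a logarithmic loss, i.e., essentially \cref{thmlog}.
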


In \cref{eq:maxlogratio}, the function $1/|\log x|$ is regarded as zero when $x$ takes the value zero. The condition \cref{eq:maxlogratio}  allows the existence of small components in the solution. To better demonstrate the nature of this condition, two examples are presented below.

\begin{example}\label{thmex:1}
This example assumes that $\boldf^{n+1}$ is the uniform discretization of a one-dimensional Gaussian, i.e.,
\begin{displaymath}
\Delta v_i = \Delta v, \ f_i^{n+1} = \frac{1}{C\sqrt{\pi}}\exp(- v_i^2), \qquad i = 1,\dots,N+1,
\end{displaymath}
where $v_i$ are uniformly distributed in $[-L, L]$, $\Delta v = 2L / (N+1)$ and $L > 0$ is set to be sufficiently large such that $\exp(-L^2)$ is sufficiently small. The constant $C$ is chosen such that $\|\boldf^{n+1}\|_1 = \|\boldone\|_1 $. Furthermore, fox fixed $L$, we assume that $N$ is an even number and large enough such that $C \geq {1}/({4L})$. According to the assumption of \cref{thm3}, we set $v_i$ to be
\begin{displaymath}
v_i = (-1)^i \lceil (N+1-i)/2 \rceil \frac{2 L}{N}, \qquad i = 1,\cdots,N+1
\end{displaymath}
such that $f_i^{n+1}$ increases with respect to $i$.
For illustration, we plot the normalized Gaussian and its sorted version in \cref{fig:ex1}, where parameters are set as $L = 6$ and $N = 20$. In this example, we take $I_1 = \lceil (N + 1)/ 2 \rceil = N/2 + 1$, then
\begin{displaymath}
    \frac{\log (f_{I_1}^{n+1})}{\log (f_1^{n+1})} = \frac{\log\frac{1}{C\sqrt{\pi}} - v_{I_1}^2}{\log\frac{1}{C\sqrt{\pi}} - L^2} \geq \frac{v_{I_1}^2 - \log(\frac{4L}{\sqrt{\pi}})}{L^2} \geq \frac{v_{I_1}^2}{2 L^2} \geq \frac{1}{8},
\end{displaymath}
which satisfies \cref{eq:maxlogratio} with $C_1 = 1/2$ and $C_f = 1/8$. This example shows a case where the values of $f_i^{n+1}$ are nonzero but can be arbitrarily small.

\begin{figure}[htbp]
  \centering
  \includegraphics[width=0.48 \textwidth]{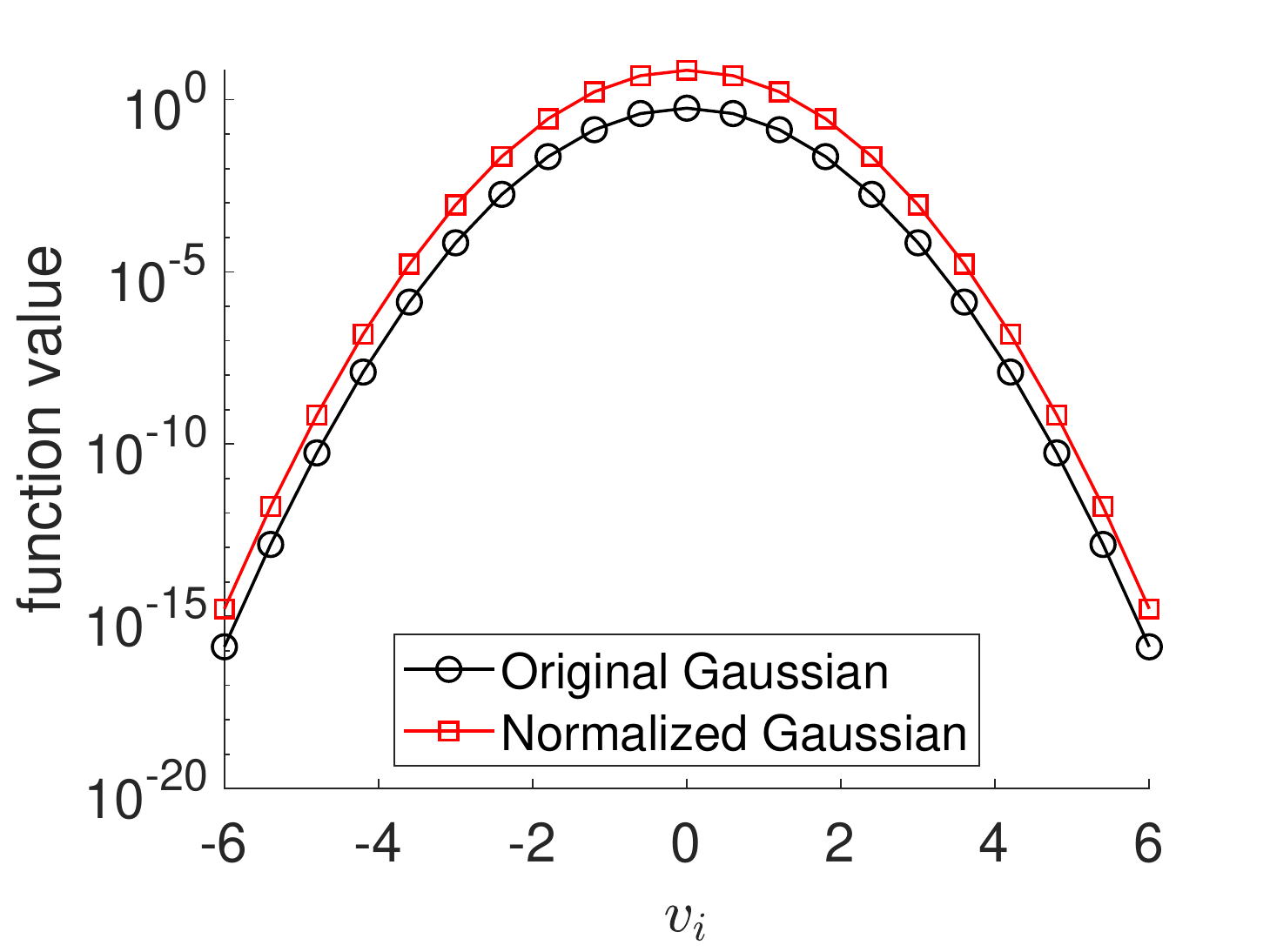}
  \includegraphics[width=0.48 \textwidth]{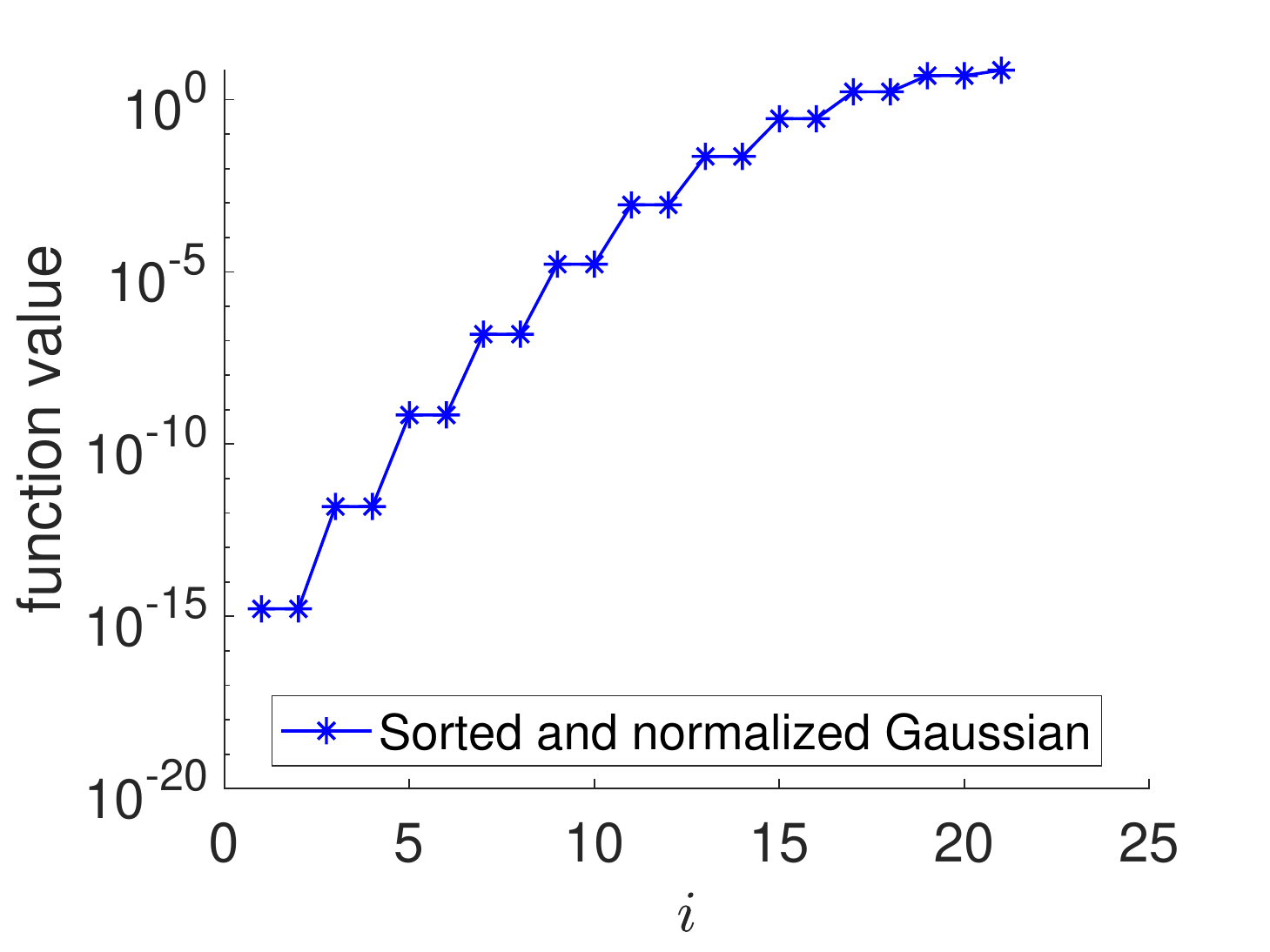}
  \caption{Discretized Gaussian, its normalization and sorted notation in \cref{thmex:1}.}
  \label{fig:ex1}
\end{figure}
\end{example}

\begin{example}\label{thmex:2}
The second example is for the case where some components of $\boldf^{n+1}$ are zero. We assume a uniform discretization on $[0,1]$ with $\Delta v_i = 1 / N$ for $i = 1, \dots, N$ and choose $\boldf^{n+1}$ to be
\begin{displaymath}
    f_i^{n+1} = \left\{ \begin{array}{ll}
        0, & i = 1,\dots,I_1, \\
        1, & i = I_1+1, \dots, N-I_1, \\
        2, & i = N-I_1+1, \dots, N.
    \end{array} \right.
\end{displaymath}
If $I_1/N$ is a constant, the vector $\boldf^{n+1}$ approximates a piecewise constant function. In this case, \cref{thm3} holds by choosing $C_1 = I_1/N$ and $C_f$ to be any positive number in $(0,1]$. The blue lines in \cref{fig:ex2} show the situation where $C_1 = 1/3$. However, if $I_1/N$ decreases to zero as $N$ increases, e.g. $I_1 \equiv 1$ for all $N$, such a constant $C_1$ cannot be found. This situation violates the condition of \cref{thm3}, which is illustrated as the red lines in \cref{fig:ex2}.


\begin{figure}[htbp]
  \centering
  \includegraphics[width=0.48 \textwidth]{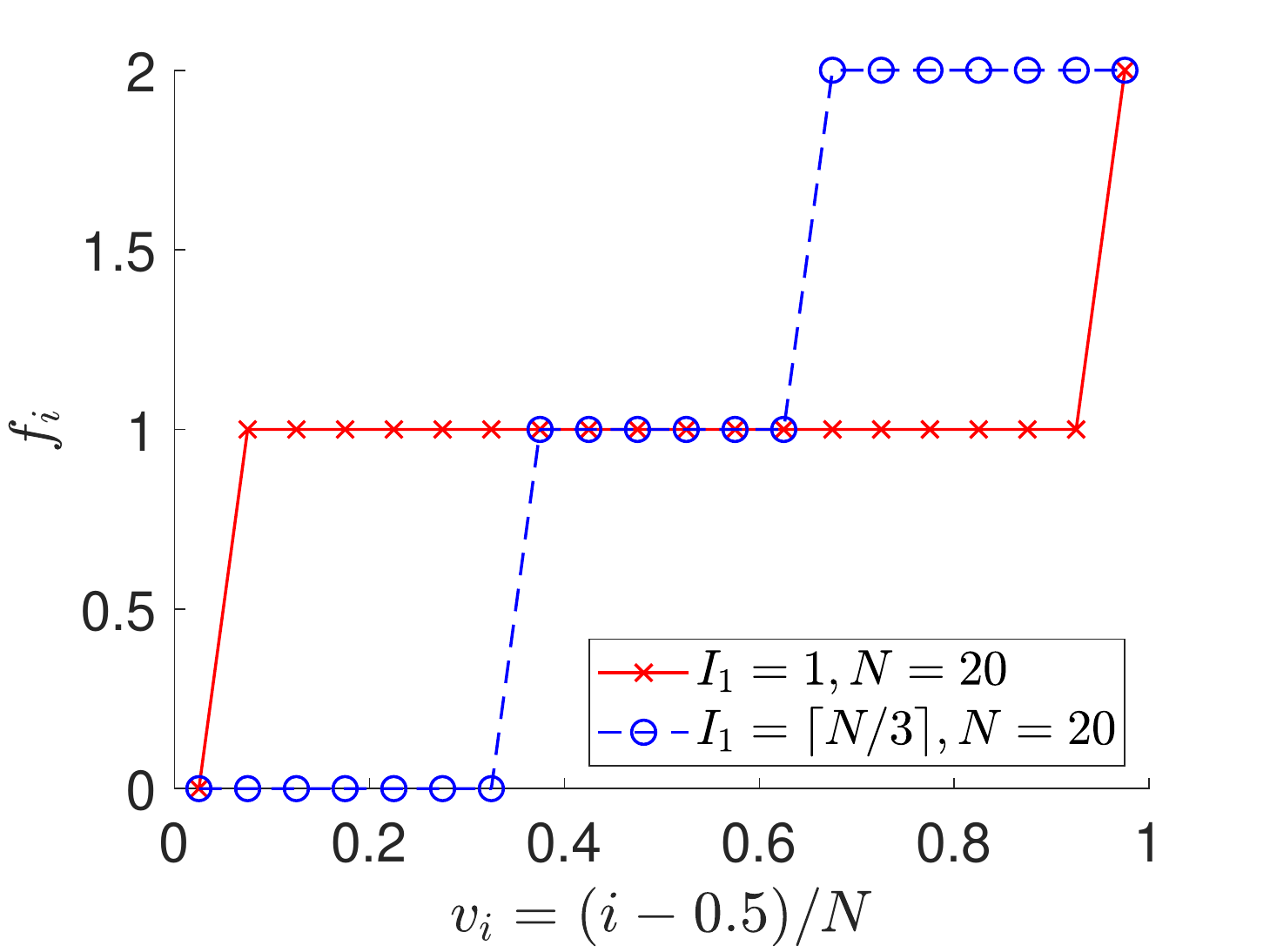}
  \includegraphics[width=0.48 \textwidth]{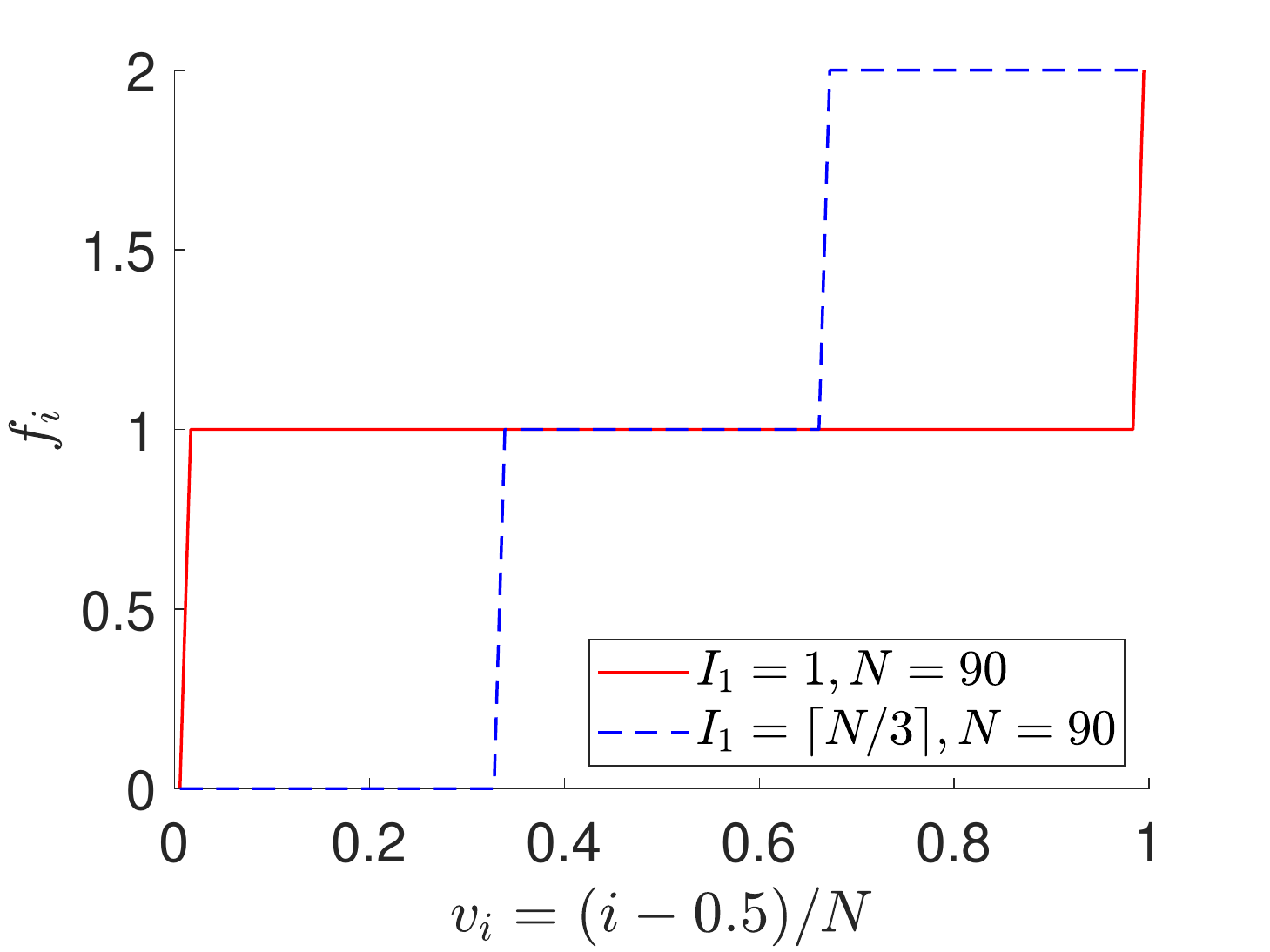}
  \caption{Illustration of \cref{thmex:2} where $f_i^{n+1}$ can only be chosen as $0$, $1$ or $2$.}
  \label{fig:ex2}
\end{figure}
\end{example}

In general, the above theorems suggest that such entropy fix can be safely used without sacrificing the numerical accuracy. Moreover, for a numerical scheme with sufficient accuracy, the violation of the entropy inequality will not always happen, meaning that the entropy fix may be needed only at a few time steps, resulting in even less significant impact on the numerical accuracy.

\begin{remark*}
The above results can be easily generalized to the cases where the equilibrium is not a constant. Assume that $\boldsymbol{\mathcal{M}} = (\mathcal{M}_1, \cdots, \mathcal{M}_N)^T \in \mathbb{R}_N^+$ is the equilibrium state of \cref{eq:assumesystem}, and the entropy functional (in this case, it is the relative entropy) is defined by
\begin{displaymath}
   \eta[\boldf] = \sum_{i=1}^N f_i \log \frac{f_i}{\mathcal{M}_i} \Delta v_i.
\end{displaymath}
We can let $g_i = f_i / \mathcal{M}_i$ and $\Delta w_i = \mathcal{M}_i \Delta v_i$, so that $\eta[\boldf]$ can be rewritten as
\begin{displaymath}
   \eta[\boldf] = \sum_{i=1}^N g_i \log g_i \Delta w_i,
\end{displaymath}
which fits the entropy formulas in the theorems again. In this case, the entropy fix \cref{eq:fixedsol} applied to $\boldg^{n+1}$ is equivalent to the following fix applied to $\boldf^{n+1}$:
\begin{equation}
	\hat{\boldf}^{n+1} = \boldf^{n+1} + \beta_p ( \boldsymbol{\mathcal{M}} - \boldf^{n+1}).
\end{equation}
By this transformation, our approach can also be applied to the linear Fokker-Planck equation. Please see the numerical section for more details.
\end{remark*}

\section{Theoretical proofs of the error estimates}
\label{sec:entropy}
This section provides all the details of the proofs of the four theorems. Instead of proving these theorems in the order they are presented, below we will first provide the proof of \cref{thm1}, which can provide necessary tools needed in the proof of \cref{thmlog}.

\subsection{Proof of \cref{thm1}}
Before proving the theorem, the relationship between entropy function and $L^2$ norm will be demonstrated by several lemmas. Among them, we will first estimate the entropy function $\eta(\boldf)$ and its $L^2$ norm $\| \boldf \|_2$ in the following lemma.

\begin{lemma} \label{lm:h=twonorm}
	For $\boldf \in \mathbb{R}_+^N$ and $\| \boldf \|_1 = V$,
	\begin{displaymath}
		\frac{1}{2 \| \boldf \|_{\infty} } \| \boldf - \boldone \|_2^2 \leq \eta(\boldf) \leq \| \boldf - \boldone \|_2^2.
	\end{displaymath}
	
	\begin{proof}
        On one hand, for $x \geq 0$,
        \begin{displaymath}
           x \log x - (x - 1) \leq x (x-1) - (x-1) = (x-1)^2,
        \end{displaymath}
        where the inequality above uses $\log x \leq x-1$. On the other hand, by Taylor's theorem, 
        \begin{displaymath}
    		x \log x = (x - 1) + \int_1^x \frac{1}{t}(x - t) \mathrm{d}t.
    	\end{displaymath}
    	For $0 \leq x \leq \| \boldf \|_{\infty}$, the integral satisfies 
		\begin{displaymath}
			\int_1^x \frac{1}{t}(x - t) \mathrm{d}t \geq \int_1^x \frac{1}{\max (x, 1)}(x - t) \mathrm{d}t \geq \int_1^x \frac{1}{\| \boldf \|_{\infty}}(x - t) \mathrm{d}t = \frac{(x-1)^2}{2 \| \boldf \|_{\infty}}.
		\end{displaymath}
		Therefore,
		\begin{displaymath}
			(x-1) + \frac{(x-1)^2}{2 \| \boldf \|_{\infty}} \leq x \log x \leq (x-1) + (x-1)^2.
		\end{displaymath}
		The lemma can be proved by taking $x=f_i$ in the above inequality and summing up all $1 \leq i \leq N$.
	\end{proof}
\end{lemma}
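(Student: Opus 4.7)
The plan is to prove both inequalities from pointwise bounds on $\phi(x) := x\log x - (x-1)$ for $x \in [0, \|\boldf\|_\infty]$, then sum against the weights $\Delta v_i$. The key preliminary observation is that $\|\boldf\|_1 = V = \|\boldone\|_1$ yields $\sum_i (f_i - 1)\,\Delta v_i = 0$, so $\eta(\boldf) = \sum_i \phi(f_i)\,\Delta v_i$. This reduces the target inequality to comparing $\sum_i \phi(f_i)\,\Delta v_i$ with $\sum_i (f_i - 1)^2 \Delta v_i = \|\boldf - \boldone\|_2^2$, so a pointwise sandwich for $\phi$ will suffice.

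For the upper bound I would use $\log x \leq x - 1$ for all $x > 0$, multiply by $x \geq 0$ to get $x\log x \leq x(x-1) = (x-1) + (x-1)^2$, hence $\phi(x) \leq (x-1)^2$. Summing with weights $\Delta v_i$ and applying the cancellation yields $\eta(\boldf) \leq \|\boldf - \boldone\|_2^2$. For the lower bound I would apply Taylor's formula with integral remainder to $x\log x$ about the base point $1$, using $(x\log x)'' = 1/x$, to obtain $x\log x = (x-1) + \int_1^x \tfrac{1}{t}(x - t)\,\mathrm{d}t$, so $\phi(x) = \int_1^x \tfrac{1}{t}(x - t)\,\mathrm{d}t$. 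On the interval of integration, $t$ lies between $\min(x,1)$ and $\max(x,1)$, so $t \leq \max(x,1) \leq \|\boldf\|_\infty$; here I need the sub-observation that $\|\boldf\|_\infty \geq 1$, which follows because the weighted average of $\boldf$ equals $1$. Replacing $1/t$ by the smaller quantity $1/\|\boldf\|_\infty$ and integrating gives $\phi(x) \geq (x-1)^2/(2\|\boldf\|_\infty)$. Summing with weights $\Delta v_i$ and invoking the cancellation yields the left-hand inequality.

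The one place that requires care is the sign handling in the integral remainder when $x < 1$: the factor $x - t$ changes sign and the orientation of the integral flips, so one must verify that the integrand is actually nonnegative on the relevant interval (rewriting as $\int_x^1 \tfrac{1}{t}(t - x)\,\mathrm{d}t$ makes this transparent) and that the bound $1/t \geq 1/\|\boldf\|_\infty$ is applied consistently. Aside from this bookkeeping, the argument is essentially mechanical once the reduction $\eta(\boldf) = \sum_i \phi(f_i)\,\Delta v_i$ is in hand.
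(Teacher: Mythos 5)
Your proposal is correct and follows essentially the same route as the paper: a pointwise sandwich of $x\log x - (x-1)$ between $(x-1)^2/(2\|\boldf\|_\infty)$ and $(x-1)^2$ via $\log x \le x-1$ and the Taylor integral remainder, followed by summation against $\Delta v_i$ with the cancellation $\sum_i (f_i-1)\Delta v_i = 0$. Your explicit attention to the sign of the remainder integral when $x<1$ and to the fact $\|\boldf\|_\infty \ge 1$ fills in details the paper leaves implicit.
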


A straightforward corollary of the above lemma is given as follows.

\begin{lemma} \label{lm:horder}
	For $\boldf^{(1)} \in \mathbb{R}_+^N$ and $\boldf^{(2)} \in \mathbb{R}_+^N$ with $\| \boldf^{(1)} \|_1 = \| \boldf^{(2)} \|_1=V$, if $\eta(\boldf^{(1)}) \leq \eta(\boldf^{(2)})$, then it holds that
	
	\begin{displaymath}
		\| \boldf^{(1)} - \boldone \|_2^2 \leq 2 \| \boldf^{(1)} \|_{\infty} \| \boldf^{(2)} -\boldone \|_2^2.
	\end{displaymath}	
\end{lemma}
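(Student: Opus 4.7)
The plan is to apply Lemma \ref{lm:h=twonorm} twice, once in each direction, and then chain the inequalities using the hypothesis $\eta(\boldf^{(1)}) \leq \eta(\boldf^{(2)})$. Both vectors satisfy the hypothesis of Lemma \ref{lm:h=twonorm}, since $\|\boldf^{(1)}\|_1 = \|\boldf^{(2)}\|_1 = V$ and both live in $\mathbb{R}_+^N$.

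First I would invoke the lower bound of Lemma \ref{lm:h=twonorm} applied to $\boldf^{(1)}$, which gives
\begin{displaymath}
    \frac{1}{2 \|\boldf^{(1)}\|_\infty} \, \|\boldf^{(1)} - \boldone\|_2^2 \;\leq\; \eta(\boldf^{(1)}).
\end{displaymath}
Next I would invoke the upper bound of the same lemma applied to $\boldf^{(2)}$, which gives
\begin{displaymath}
    \eta(\boldf^{(2)}) \;\leq\; \|\boldf^{(2)} - \boldone\|_2^2.
\end{displaymath}
Inserting the hypothesis $\eta(\boldf^{(1)}) \leq \eta(\boldf^{(2)})$ between the two estimates produces a single chain, and multiplying through by $2\|\boldf^{(1)}\|_\infty$ yields exactly the stated inequality.

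There is no real obstacle here: the only things to check are that $\boldf^{(1)}$ and $\boldf^{(2)}$ both satisfy the hypotheses of Lemma \ref{lm:h=twonorm} (they do, by assumption) and that the constant $\|\boldf^{(1)}\|_\infty$ is positive (which is automatic unless $\boldf^{(1)} = \boldsymbol{0}$, a case ruled out by $\|\boldf^{(1)}\|_1 = V > 0$, so in particular $\|\boldf^{(1)}\|_\infty > 0$). The lemma is genuinely an immediate corollary, and the proof amounts to a two-line chain of inequalities plus a multiplication.
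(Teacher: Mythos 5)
Your proof is correct and is exactly the argument the paper intends: the paper states \cref{lm:horder} as ``a straightforward corollary'' of \cref{lm:h=twonorm}, obtained by chaining the lower bound for $\boldf^{(1)}$, the hypothesis $\eta(\boldf^{(1)}) \leq \eta(\boldf^{(2)})$, and the upper bound for $\boldf^{(2)}$. Nothing further is needed.
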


After showing the equivalence between entropy function and 2-norm, we will 
proceed to discuss the relationship between $\eta(\boldf^{(1)}) - \eta(\boldf^{(2)})$ and $\| \boldf^{(1)} - \boldf^{(2)} \|_2$ for any two vectors $\boldf^{(1)}$ and $\boldf^{(2)}$. By the definition of $\eta(\cdot)$, we are inspired to study the estimation of $h(x) - h(y)$. The result is presented in the following lemma.

\begin{lemma}\label{lm:smallhdiff}
    Given $0 < C_0 \leq 1$, $y \geq 0$ and $x \geq C_0$, if $y \geq C_0$ or $h(x) > h(y)$, then 
	\begin{equation}
	\label{eq:hdiff}
		\left| h(x) - h(y) \right| \leq \max \left( 2, 2 | \log ( C_0 ) | \right) \left| x - y \right| \left( \left| x - 1 \right| + \left| y - 1 \right| \right).
	\end{equation}
\end{lemma}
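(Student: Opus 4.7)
I would write $h(x)-h(y)=\int_y^x\log t\,dt$ and factor $\log t=(t-1)\,\phi(t)$, where $\phi(t):=\log t/(t-1)$ for $t\neq 1$ and $\phi(1):=1$. A short calculus check---the auxiliary $\psi(t):=1-1/t-\log t$ satisfies $\psi(1)=0$ and $\psi'(t)=(1-t)/t^{2}$, so $\psi\le 0$ and hence $\phi'\le 0$---shows that $\phi$ is strictly positive and strictly decreasing on $(0,\infty)$. Combined with the elementary bound $\int_a^b|t-1|\,dt\le (b-a)(|a-1|+|b-1|)$ for $a\le b$ (verified by a three-subcase computation depending on the position of $1$ relative to $[a,b]$), this yields
\[
|h(x)-h(y)|\;\le\;\phi(\min(x,y))\,|x-y|\,(|x-1|+|y-1|).
\]
It then remains to control $\phi(\min(x,y))$ in each of the two cases covered by the hypothesis.

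\emph{Case 1 ($y\ge C_0$).} Both $x,y\ge C_0$, so $\phi(\min(x,y))\le\phi(C_0)=|\log C_0|/(1-C_0)$. I would verify $\phi(C_0)\le M:=\max(2,2|\log C_0|)$ for every $C_0\in(0,1]$ by splitting at $1/2$: if $C_0\le 1/2$, then $1-C_0\ge 1/2$ gives $\phi(C_0)\le 2|\log C_0|\le M$; if $C_0>1/2$, then monotonicity of $\phi$ gives $\phi(C_0)\le\phi(1/2)=2\log 2<2\le M$. Either way, \eqref{eq:hdiff} follows.

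\emph{Case 2 ($y<C_0$ and $h(x)>h(y)$).} Since $y<C_0\le 1$, if $x\le 1$ held, then $y<x$ would both lie in $[0,1]$ where $h$ is strictly decreasing, contradicting $h(x)>h(y)$. Hence $x>1$, and consequently $|x-1|+|y-1|=(x-1)+(1-y)=x-y$. Splitting at $t=1$,
\[
h(x)-h(y)=\int_1^x\log t\,dt+\int_y^1\log t\,dt,
\]
where the second integral equals $y-y\log y-1\in[-1,0]$. Since $h(x)-h(y)>0$,
\[
|h(x)-h(y)|\;\le\;\int_1^x\log t\,dt\;=\;x\log x-x+1\;\le\;\tfrac{1}{2}(x-1)^{2},
\]
the last inequality being standard (the difference $\tfrac12(x-1)^2-(x\log x-x+1)$ vanishes together with its first derivative at $x=1$ and has nonnegative second derivative $(x-1)/x$ on $[1,\infty)$). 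Using $0\le x-1\le x-y$, I then conclude
\[
|h(x)-h(y)|\;\le\;\tfrac{1}{2}(x-y)^{2}\;=\;\tfrac{1}{2}\,|x-y|\,(|x-1|+|y-1|)\;\le\;M\,|x-y|\,(|x-1|+|y-1|),
\]
since $M\ge 2$.

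\emph{Main obstacle.} The principal subtlety is recognizing that the Case 1 argument breaks down when $y$ is small, because $\phi(y)$ diverges like $|\log y|/(1-y)$; the alternative hypothesis $h(x)>h(y)$ must then be used to force $x>1$, after which the standard estimate $x\log x-x+1\le(x-1)^2/2$ closes the argument. The verification $\phi(C_0)\le M$ also requires splitting near $C_0=1/2$, but is routine.
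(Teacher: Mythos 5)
Your proof is correct, and it takes a somewhat different route from the paper's. The paper applies the mean value theorem, $h(x)-h(y)=\log(\xi)(x-y)$, and then runs a multi-way case analysis on the sign of $\log\xi$ and on the relative positions of $x,y$ to produce the intermediate bound $|h(x)-h(y)|\le|x-y|\min\bigl(|\log C_0|,\tfrac{1}{C_0}(|x-1|+|y-1|)\bigr)$, finally splitting at $x,y\gtrless 1/2$ to absorb everything into the constant $\max(2,2|\log C_0|)$. You instead work with the integral representation and the factorization $\log t=(t-1)\phi(t)$ with $\phi$ positive and decreasing, which packages the whole of the paper's Case 1 (both arguments above $C_0$) into the single clean estimate $\phi(\min(x,y))\le\phi(C_0)=|\log C_0|/(1-C_0)$ — in fact a slightly sharper constant than $\max(2,2|\log C_0|)$, as your split at $C_0=1/2$ shows. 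For the delicate case ($y$ small but $h(x)>h(y)$), the paper observes via MVT that $\log\xi>0$ and bounds $\log\xi\le\xi-1$; you instead deduce $x>1$ from monotonicity of $h$ on $[0,1]$, discard the nonpositive piece $\int_y^1\log t\,\mathrm{d}t$, and invoke $x\log x-x+1\le\tfrac12(x-1)^2$. Both closings are valid; yours trades the one-line MVT observation for a slightly longer but equally elementary quadratic estimate, while your unified treatment of the first case is arguably tidier than the paper's. One cosmetic remark: your sentence ``it remains to control $\phi(\min(x,y))$ in each of the two cases'' slightly misdescribes your own Case 2, which does not bound $\phi(\min(x,y))$ (that quantity can blow up as $y\to 0$) but rather abandons the factorization and argues directly — which is exactly the right thing to do.
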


\begin{proof}
    If $x=y$, it is obvious that the lemma is correct. It remains to prove the lemma when $x \neq y$.
    
    By the mean value theorem,
	\begin{equation} \label{eq:hmean}
		h(x) -  h (y) = \log (\xi) ( x - y ),
	\end{equation}
	where $\xi$ is between $x$ and $y$. If $\log (\xi) \geq 0$, it holds that $\xi \geq  1$ and \begin{displaymath}
	    |  \log (\xi) | = \log (\xi) \leq \xi -1 \leq \max (x -1, y-1) \leq |x-1| + |y-1|.
	\end{displaymath}
	Therefore, if $\log (\xi) \geq 0$, \cref{eq:hmean} becomes
	\begin{equation} \label{eq:coff1}
	    | h(x) -  h (y) | \leq | x - y | \left( |x-1| + |y-1| \right). 
	\end{equation}
	
	Next we assume $h(x) > h(y)$. If $h(x) > h(y)$ and $x>y$, \cref{eq:hmean} implies $\log (\xi) >0$, which gives \cref{eq:coff1}. If $h(x) > h(y)$ and $x<y$, \cref{eq:hmean} implies $\log (\xi) <0$ and $\xi \leq 1$. In this case, $y > x \geq C_0$, which implies $\xi \geq C_0$ and $\log (\xi) \geq \log(C_0)$. Therefore, \cref{eq:hmean} becomes
	\begin{equation} \label{eq:logmean}
	    | h(x) -  h (y) | = - \log (\xi)  | x - y | \leq - \log(C_0) | x - y | = |\log(C_0) | | x - y |.
	\end{equation}
	On the other hand, by the mean value theorem,
	\begin{displaymath}
	    - \log (\xi)  = \log(1) - \log (\xi) = \frac{1}{\xi_2} (1  - \xi) \leq \frac{1}{C_0} \left( |x-1| + |y-1| \right),
	\end{displaymath}
	where $\xi_2 \in [\xi, 1] \subset [C_0, 1]$. 
	The above results can be summarized into the following estimation:
	\begin{equation} \label{eq:logc0}
	    | h(x) -  h (y) | \leq | x - y |  \min \left( | \log (C_0) | , \frac{1}{C_0} \left( |x-1| + |y-1| \right) \right).
	\end{equation}
	If we further assume $x \geq 1/2$ and $y \geq 1/2$, then \cref{eq:logc0} is satisfied with $C_0 = 1/2$, which becomes
	\begin{displaymath}
	    | h(x) -  h (y) | \leq |x-y|\min \left( \log 2 , 2 \left( |x-1| + |y-1| \right) \right) \leq  2 | x - y| \left( |x-1| + |y-1| \right).
	\end{displaymath}
	Otherwise, if $x < 1/2$ or $y < 1/2$, we have $2 ( |x-1| + |y-1| ) \geq 1$. Therefore,
	\begin{displaymath}
	    \min \left( | \log (C_0) | , \frac{1}{C_0} \left( |x-1| + |y-1| \right) \right) \leq | \log (C_0) | \leq  2 | \log (C_0) | \left( |x-1| + |y-1| \right).
	\end{displaymath}
	Combining the two results above yields the inequality \eqref{eq:hdiff} when $h(x) > h(y)$.
	
	It remains only to consider the case $h(x) \leq h(y)$ and $y \geq C_0$. If $x<y$, \cref{eq:hmean} implies $\log (\xi) \geq 0$, which gives the result of \cref{eq:coff1}. Otherwise, $x>y$ implies $\log (\xi) \leq 0$. Since $x \geq C_0$ and $y \geq C_0$, it holds that $\xi \geq C_0$, and therefore $0 \geq \log (\xi) \geq \log(C_0)$, which also yields \cref{eq:logc0}. The rest of the proof is the same as the previous case.
%
\end{proof}

With the help of the above lemma, we could give an upper bound of the difference of entropy functions $\eta(\boldf^{(1)}) - \eta(\boldf^{(2)})$ in the following lemma.

\begin{lemma}\label{lm:hdiff}
	For $\boldf^{(1)} = (f^{(1)}_1, \ldots, f^{(1)}_N) \in \mathbb{R}_+^N$ and $\boldf^{(2)} = (f^{(2)}_1, \ldots, f^{(2)}_N) \in \mathbb{R}_+^N$ with $\| \boldf^{(1)} \|_1 = \| \boldf^{(2)} \|_1=V$, given $0 <C_0 \leq 1$, if $f^{(1)}_i \geq C_0$ for all $1 \leq i \leq N$ and $\boldf^{(2)}$ satisfies either of the following conditions: 
	\begin{enumerate}
	\item $f^{(2)}_i \geq C_0$ for all $1 \leq i \leq N$; \item $\eta(\boldf^{(2)}) < \eta(\boldf^{(1)})$; 
	\end{enumerate}
    then it holds that
	\begin{displaymath}
		|\eta(\boldf^{(1)}) - \eta(\boldf^{(2)})| \leq \max \left(2, 2 | \log (C_0) | \right) \| \boldf^{(1)} - \boldf^{(2)} \|_2 \left( \| \boldf^{(1)} - \boldone \|_2 + \| \boldf^{(2)} - \boldone \|_2\right).
	\end{displaymath}
\end{lemma}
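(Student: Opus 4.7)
The plan is to expand
\[
\eta(\boldf^{(1)})-\eta(\boldf^{(2)})=\sum_{i=1}^N\bigl(h(f^{(1)}_i)-h(f^{(2)}_i)\bigr)\Delta v_i
\]
using the mass conservation $\|\boldf^{(1)}\|_1=\|\boldf^{(2)}\|_1$, which cancels the linear $-f_i$ parts of $h(x)=x\log x-x$ and lets us replace $f_i\log f_i$ by $h(f_i)$ inside the sum. I would then estimate each summand through the pointwise bound supplied by \cref{lm:smallhdiff}, and finally convert the resulting sum $\sum_i|f^{(1)}_i-f^{(2)}_i|(|f^{(1)}_i-1|+|f^{(2)}_i-1|)\Delta v_i$ into the claimed $L^2$ estimate by two applications of the weighted Cauchy--Schwarz inequality.

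In case 1, every $f^{(2)}_i\geq C_0$, so \cref{lm:smallhdiff} applies directly to each pair $(x,y)=(f^{(1)}_i,f^{(2)}_i)$: the branch $y\geq C_0$ of its hypothesis is met on every index. Summing the termwise bound against $\Delta v_i$, applying the triangle inequality, and then Cauchy--Schwarz on each of the two products $\sum_i|f^{(1)}_i-f^{(2)}_i|\,|f^{(1)}_i-1|\Delta v_i$ and $\sum_i|f^{(1)}_i-f^{(2)}_i|\,|f^{(2)}_i-1|\Delta v_i$ yields the claimed inequality with no further work.

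Case 2, where only $\eta(\boldf^{(2)})<\eta(\boldf^{(1)})$ is assumed, is the crux of the proof and the main obstacle. Some components $f^{(2)}_i$ may be arbitrarily small, so neither clause of \cref{lm:smallhdiff} is automatically satisfied on every index. The key observation is that under case 2 we have $|\eta(\boldf^{(1)})-\eta(\boldf^{(2)})|=\eta(\boldf^{(1)})-\eta(\boldf^{(2)})$, which equals the signed sum $\sum_i(h(f^{(1)}_i)-h(f^{(2)}_i))\Delta v_i$. Partitioning the indices into $I_+=\{i:h(f^{(1)}_i)>h(f^{(2)}_i)\}$ and its complement, the complement contributes non-positively and can simply be discarded; on $I_+$ the alternative clause $h(x)>h(y)$ of \cref{lm:smallhdiff} holds by construction (with $x=f^{(1)}_i\geq C_0$), so the componentwise bound still applies. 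Enlarging the sum back to all indices only increases the right-hand side, and Cauchy--Schwarz finishes the argument exactly as in case 1. The delicate point is precisely this sign trick: the global entropy inequality serves as a substitute for a pointwise lower bound on $\boldf^{(2)}$, because the ``bad'' indices where $h(f^{(1)}_i)\leq h(f^{(2)}_i)$ carry the favorable sign in the unsigned quantity being estimated and can therefore be thrown away rather than controlled.
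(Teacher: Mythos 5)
Your proposal is correct and follows essentially the same route as the paper's proof: the same reduction of $\eta(\boldf^{(1)})-\eta(\boldf^{(2)})$ to $\sum_i(h(f^{(1)}_i)-h(f^{(2)}_i))\Delta v_i$ via mass conservation, a direct application of \cref{lm:smallhdiff} plus Cauchy--Schwarz in case 1, and in case 2 the identical sign trick of discarding the indices with $h(f^{(1)}_i)\leq h(f^{(2)}_i)$, applying \cref{lm:smallhdiff} on the remaining indices through its $h(x)>h(y)$ clause, and re-enlarging the sum before Cauchy--Schwarz.
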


\begin{proof}
    For simplicity, we use $M$ to denote the constant $\max \left(2, 2 | \log (C_0) | \right)$ in this proof. In the first case $f^{(2)}_i \geq C_0 >0$ for all $1 \leq i \leq N$, we can plug $x = f^{(1)}_i$ and $y = f^{(2)}_i$ in \cref{lm:smallhdiff} and sum over all $1 \leq i \leq N$. By using $\|\boldf^{(1)}\|_1 = \|\boldf^{(2)}\|_1$, we can obtain that
	\begin{displaymath}
	  |\eta(\boldf^{(1)}) - \eta(\boldf^{(2)})| \leq M \sum_{i=1}^N	\left(  |(f_i^{(1)} - 1)(f_i^{(1)} - f_i^{(2)})|
			+ |(f_i^{(2)} - 1)(f_i^{(1)} - f_i^{(2)})| \right) \Delta v_i.
	\end{displaymath}
	The lemma can be proven by the Cauchy-Schwarz inequality.
	
	In the second case $\eta(\boldf^{(2)}) < \eta(\boldf^{(1)})$, we have
	\begin{align*}
			& \eta(\boldf^{(1)}) - \eta(\boldf^{(2)})  \\
			={} & \sum_{h(f_i^{(1)}) \leq h(f_i^{(2)})} \left( h(f_i^{(1)})  - h(f_i^{(2)}) \right) \Delta v_i + \sum_{h(f_i^{(1)}) > h(f_i^{(2)})} \left( h(f_i^{(1)}) - h(f_i^{(2)}) \right) \Delta v_i \\
			\leq{} & \sum_{h(f_i^{(1)}) > h(f_i^{(2)})} \left(  h(f_i^{(1)}) - h(f_i^{(2)}) \right) \Delta v_i \\
			\leq{} & M \sum_{h(f_i^{(1)}) > h(f_i^{(2)})} \left(  |(f_i^{(1)} - 1)(f_i^{(1)} - f_i^{(2)})| + |(f_i^{(2)} - 1)(f_i^{(1)} - f_i^{(2)})| \right) \Delta v_i,
	\end{align*}
	where the last inequality is again the result of \cref{lm:smallhdiff}. The lemma naturally follows by extending the range of summation of $i$ to $1,\dots,N$ and applying the Cauchy-Schwarz inequality.
\end{proof}

In the proof of case 2, we applied \cref{lm:smallhdiff} only to $f_i^{(1)}$ and $f_i^{(2)}$ with $h(f_i^{(1)}) > h(f_i^{(2)})$. This allows us to relax the condition ``$f^{(1)}_i \geq C_0$ for all $1 \leq i \leq N$'' in the case $\eta(\boldf^{(1)}) > \eta(\boldf^{(2)})$. In fact, we need $f^{(1)}_i > C_0$ only for the components that require \cref{lm:smallhdiff}. We write this result in the following corollary:

\begin{corollary} \label{lm5col1}
 	For $\boldf^{(1)} = (f^{(1)}_1, \dots, f^{(1)}_N) \in \mathbb{R}_+^N$ and $\boldf^{(2)} = (f^{(2)}_1, \dots, f^{(2)}_N) \in \mathbb{R}_+^N$ with $\| \boldf^{(1)} \|_1 = \| \boldf^{(2)} \|_1=V$, we assume $\eta(\boldf^{(1)}) > \eta(\boldf^{(2)})$. If there exists $0 < C_0 <1$ such that for any $i = 1,\dots,N$, either $f^{(1)}_i \geq C_0$ or $h(f_i^{(1)}) \leq h(f_i^{(2)})$ is satisfied, then it holds that 
 	\begin{displaymath}
 		|\eta(\boldf^{(1)}) - \eta(\boldf^{(2)})| \leq \max \left(2, 2 | \log (C_0) | \right) \| \boldf^{(1)} - \boldf^{(2)} \|_2 \left( \| \boldf^{(1)} - \boldone \|_2 + \| \boldf^{(2)} - \boldone \|_2\right).
 	\end{displaymath}
\end{corollary}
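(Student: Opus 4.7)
My plan is to mimic case~2 of the proof of \cref{lm:hdiff}, with the crucial observation that in that argument the hypothesis $f_i^{(1)} \geq C_0$ was actually only used for those indices at which the summand is nonnegative, i.e., at which $h(f_i^{(1)}) > h(f_i^{(2)})$. The relaxed hypothesis in the corollary supplies exactly this, so the same derivation goes through verbatim.

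Concretely, I would begin by splitting
\begin{displaymath}
\eta(\boldf^{(1)}) - \eta(\boldf^{(2)}) = \sum_{h(f_i^{(1)}) \leq h(f_i^{(2)})} \bigl(h(f_i^{(1)}) - h(f_i^{(2)})\bigr)\Delta v_i + \sum_{h(f_i^{(1)}) > h(f_i^{(2)})} \bigl(h(f_i^{(1)}) - h(f_i^{(2)})\bigr)\Delta v_i,
\end{displaymath}
and immediately drop the first sum, which is $\leq 0$, to obtain an upper bound by the second sum only. For every index $i$ appearing in this second sum the hypothesis of the corollary forces $f_i^{(1)} \geq C_0$, while the very definition of the sum gives $h(f_i^{(1)}) > h(f_i^{(2)})$. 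Thus with $x = f_i^{(1)}$ and $y = f_i^{(2)}$, both alternatives required by \cref{lm:smallhdiff} are met ($x \geq C_0$ is explicit, and ``$y \geq C_0$ or $h(x) > h(y)$'' holds via the second alternative), so \cref{lm:smallhdiff} applies with constant $\max(2, 2|\log C_0|)$.

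Summing the resulting pointwise bound over the restricted index set and then enlarging the sum to all $1 \leq i \leq N$ — which is legitimate because each added term $|(f_i^{(1)} - 1)(f_i^{(1)} - f_i^{(2)})| + |(f_i^{(2)} - 1)(f_i^{(1)} - f_i^{(2)})|$ is nonnegative — yields
\begin{displaymath}
\eta(\boldf^{(1)}) - \eta(\boldf^{(2)}) \leq M \sum_{i=1}^N \bigl(|f_i^{(1)} - 1| + |f_i^{(2)} - 1|\bigr)\,|f_i^{(1)} - f_i^{(2)}|\,\Delta v_i,
\end{displaymath}
where $M = \max(2, 2|\log C_0|)$. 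A Cauchy--Schwarz estimate on this weighted sum then produces the claimed bound in terms of $\|\boldf^{(1)} - \boldf^{(2)}\|_2$ and $\|\boldf^{(j)} - \boldone\|_2$. Since $\eta(\boldf^{(1)}) > \eta(\boldf^{(2)})$ by assumption, the absolute value $|\eta(\boldf^{(1)}) - \eta(\boldf^{(2)})|$ coincides with the expression just bounded, completing the argument.

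I do not anticipate a real obstacle: the only nontrivial matter is verifying that \cref{lm:smallhdiff} still applies index by index under the weakened hypothesis, and this is immediate from the way its hypotheses are phrased (``$x \geq C_0$, and $y \geq C_0$ or $h(x) > h(y)$''). The rest is a direct reuse of the case~2 bookkeeping from \cref{lm:hdiff}, with a relabelling of which indices the pointwise inequality is invoked at.
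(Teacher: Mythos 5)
Your proposal is correct and is precisely the argument the paper intends: the corollary is stated as an immediate consequence of case~2 of the proof of \cref{lm:hdiff}, observing that \cref{lm:smallhdiff} is invoked only at indices with $h(f_i^{(1)}) > h(f_i^{(2)})$, where the relaxed hypothesis forces $f_i^{(1)} \geq C_0$. The bookkeeping (dropping the nonpositive sum, extending the index set, Cauchy--Schwarz, and using $\eta(\boldf^{(1)}) > \eta(\boldf^{(2)})$ to remove the absolute value) matches the paper's.
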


We are now ready to prove \cref{thm1}.

\begin{proof}[Proof of \cref{thm1}]
	The convexity of $\eta(\cdot)$ implies
	\begin{displaymath}
		\eta(\boldf^{n+1}+\beta(\boldone - \boldf^{n+1})) \leq \beta \eta(\boldone) + (1 - \beta) \eta(\boldf^{n+1}).
	\end{displaymath}
	By \cref{eq:enfix} with $\eta(\boldone)=0$, the above inequality is equivalent as
	\begin{equation} \label{thm1:ratio}
		\beta \leq \frac{\eta(\boldf^{n+1}) - \eta( \boldf(t_{n+1}) )}{\eta(\boldf^{n+1})}.
	\end{equation}
	The numerator in \cref{thm1:ratio} can be estimated by
	\begin{align*}
			& \eta(\boldf^{n+1}) - \eta( \boldf(t_{n+1}) ) \\
		    \leq{} & M \| \boldf^{n+1} - \boldf(t_{n+1}) \|_2 \left( \| \boldf^{n+1} - \boldone \|_2 + \| \boldf(t_{n+1}) - \boldone \|_2\right) & \text{(\cref{lm:hdiff})}  \\
			\leq{} & M \| \boldf^{n+1} - \boldf(t_{n+1}) \|_2 \left( 1 + \sqrt{ \| \boldf(t_{n+1}) \|_{\infty} } \right) \| \boldf^{n+1} - \boldone \|_2, & \text{(\cref{lm:horder})}
	\end{align*}
	where $M = \max(2, 2|\log(C_0)|)$.
	On the other hand, according to \cref{lm:h=twonorm}, the denominator in \cref{thm1:ratio} satisfies
	\begin{displaymath}
		\eta(\boldf^{n+1}) \geq \frac{1}{2\| \boldf^{n+1} \|_{\infty}} \| \boldf^{n+1} - \boldone \|_2^2.
	\end{displaymath}
	Therefore,
	\begin{displaymath}
		\| \beta (\boldone - \boldf^{n+1}) \|_2 \leq 2M \|\boldf^{n+1}\|_{\infty} (1 + \sqrt{\| \boldf(t_{n+1}) \|_{\infty}}) \| \boldf^{n+1} - \boldf(t_{n+1}) \|_2.
	\end{displaymath}
\end{proof}

In this case, we would like to give a remark on the practical choice of $\beta_p$ in \cref{eq:enfixpratic}. Instead of solving $\eta(\boldf^{n+1} + \beta_p( \boldone - \boldf^{n+1})) = \eta(\boldf^n)$, we can simply take $\hat{\beta}_p = ({\eta(\boldf^{n+1}) - \eta( \boldf^n ) })/{ \eta( \boldf^{n+1} ) }$, which equals the upper bound in \cref{thm1:ratio}. Note that the convexity of function $\eta(\cdot)$ implies $\eta(\boldf^n) = (1 - \hat{\beta}_p)\eta(\boldf^{n+1}) + \hat{\beta}_p \eta(\boldone) \geq \eta(\boldf^{n+1} + \hat{\beta}_p (\boldone - \boldf^{n+1}))$. Therefore, under the condition of \cref{thm1}, if we change the numerical solution at $(n+1)$th step to $\boldf^{n+1} + \hat{\beta}_p (\boldone - \boldf^{n+1})$, it still holds that $\| \hat{\beta}_p  ( \boldone - \boldf^{n+1})\|_2 \leq M \| \boldf(t_{n+1}) - \boldf^{n+1} \|_2$.

\subsection{Proof of \cref{thmlog}}
Different from the previous proof, in \cref{thmlog}, we allow the solution to have components arbitrarily close to zero, so that \cref{lm:hdiff} cannot be directly applied.
To overcome this difficulty, we introduce a regularization term before using \cref{lm:hdiff}. The details are given as follows.

\begin{proof}[Proof of \cref{thmlog}]
    
    For simplicity, we let $\varepsilon = \| \boldf^{n+1} - \boldf(t_{n+1}) \|_{2}$.
    To avoid dealing with zero components, we first regularize the numerical solution $\boldf^{n+1}$ by
    \begin{equation} \label{eq:perturb}
    \boldf^{n+1,1} = \boldf^{n+1} + \varepsilon (\boldone - \boldf^{n+1}), 
    \end{equation}
    after which $f^{n+1,1}_i \geq \varepsilon$ for all $i = 1,\dots,N$. On the other hand, since $\| \boldone - \boldf^{n+1} \|_{\infty} \leq \max( 1, \| \boldf \|_{\infty} -1) \leq \| \boldf \|_{\infty}$, the $L^2$ norm of the perturbation introduced by the regularization satisfies
    \begin{displaymath}
    \|\varepsilon (\boldone - \boldf^{n+1}) \|_2 \leq \sqrt{V} \| \boldf^{n+1} \|_{\infty} \varepsilon.
    \end{displaymath}
    
    After perturbation, if $\eta(\boldf^{n+1,1}) < \eta(\boldf(t_{n+1}))$, then we have $\beta < \varepsilon$ so that the conclusion of the theorem is drawn. If $\eta(\boldf^{n+1,1}) > \eta(\boldf(t_{n+1}))$, we can find $\beta_2 \in (0,1]$ such that
    \begin{displaymath}\eta(\boldf^{n+1,1}+\beta_2(\boldone - \boldf^{n+1,1})) = \eta(\boldf(t_{n+1})),
    \end{displaymath}
    which is identical to \cref{eq:enfix} by replacing $\boldf^{n+1}$ to $\boldf^{n+1,1}$. Therefore, we can set $C_0 = \varepsilon$ in \cref{thm1} to obtain
	\begin{displaymath}
		\| \beta_2 (\boldone - \boldf^{n+1,1}) \|_2 \leq M_1 \| \boldf^{n+1,1} - \boldf(t_{n+1}) \|_2,
	\end{displaymath}
	and by the proof of \cref{thm1}, we know that
	\begin{displaymath}
	\begin{aligned}
	M_1 &= 4 \max \left(1, \left| \log \varepsilon \right| \right) \|\boldf^{n+1,1}\|_{\infty} (1 + \sqrt{\| \boldf(t_{n+1}) \|_{\infty}}) \\
	&\leq 4 ( 1 + \left| \log \varepsilon \right| ) \|\boldf^{n+1}\|_{\infty} (1 + \sqrt{\| \boldf(t_{n+1}) \|_{\infty}}),
	\end{aligned}
	\end{displaymath}
	since $\| \boldf^{n+1,1} \|_{\infty} \leq \|\boldf^{n+1}\|_{\infty}$.
	
	If we define
	\begin{equation} \label{eq:afterperturb}
	    \hat{\boldf}^{n+1} = \boldf^{n+1,1}+\beta_2(\boldone - \boldf^{n+1,1}) =
		\boldf^{n+1} + (\varepsilon + \beta_2 - \varepsilon \beta_2) (\boldone - \boldf^{n+1}),
	\end{equation}
	then by $\eta(\hat{\boldf}^{n+1}) = \eta(\boldf(t_{n+1}))$ we know that $\beta = \varepsilon + \beta_2 - \varepsilon \beta_2$. Thus
	it holds that
	\begin{align*}
	    	\| \beta(\boldone - \boldf^{n+1}) \|_2 &= \| \hat{\boldf}^{n+1} - \boldf^{n+1} \|_2 \leq  \| \hat{\boldf}^{n+1} - \boldf^{n+1,1} \|_2 + \| \boldf^{n+1,1} - \boldf^{n+1} \|_2 \\
			& \leq M_1 \| \boldf^{n+1,1} - \boldf(t_{n+1}) \|_2 + \| \boldf^{n+1,1} - \boldf^{n+1} \|_2 \\
			& \leq M_1 (\| \boldf^{n+1,1} - \boldf^{n+1} \|_2 + \varepsilon) + \| \boldf^{n+1,1} - \boldf^{n+1} \|_2 \\
			& \leq M_1 (\sqrt{V} \| \boldf^{n+1} \|_{\infty} \varepsilon + \varepsilon) + \sqrt{V} \| \boldf^{n+1} \|_{\infty} \varepsilon \leq M_2 \varepsilon (|\log \varepsilon| + 1),
	\end{align*}
	where $M_2 = 8 (\sqrt{V} \| \boldf^{n+1} \|_{\infty} + 1) \|\boldf^{n+1}\|_{\infty} (1 + \sqrt{\| \boldf(t_{n+1}) \|_{\infty}})$.
%
%
\end{proof}

The proof of this theorem follows the two-step procedure, which will also be applied in the proof of \cref{thm2}.

\subsection{Proof of \cref{thm2}}
To prove \cref{thm2}, we deal with the components with $f_i^{n+1} < \frac{2}{3}$ and $f_i^{n+1} > \frac{2}{3}$ separately. The difference between these two cases can be seen from the following lemma:

\begin{lemma} \label{lm:infperturb}
	For $\boldf^{(1)} \in \mathbb{R}_+^N$ and $\boldf^{(2)} \in \mathbb{R}_+^N$ with $\| \boldf^{(1)} -  \boldf^{(2)} \|_{\infty} \leq \frac{1}{3}$, define
	\begin{displaymath}
		\boldf^{(3)} = \boldf^{(1)} + \beta_1 (\boldone - \boldf^{(1)}),
	\end{displaymath}
	where $\beta_1 = 3 \| \boldf^{(1)} - \boldf^{(2)} \|_{\infty}$. If $\| \boldf^{(1)} \|_1 = V$, then $\boldf^{(3)}$ satisfies following properties:
	\begin{enumerate}
		\item For all $k$ such that $f_k^{(1)} < \frac{2}{3}$, it holds that $h(f^{(3)}_k) \leq h(f^{(2)}_k)$;
		\item For all $k$ such that $f_k^{(1)} \geq \frac{2}{3}$, it holds that $f^{(3)}_k \geq \frac{2}{3}$;
		\item $\| \boldf^{(3)} - \boldf^{(1)} \|_{\infty} \leq 3 \| \boldf^{(1)} \|_{\infty} \| \boldf^{(1)} - \boldf^{(2)} \|_{\infty}$.
	\end{enumerate}
\end{lemma}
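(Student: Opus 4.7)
The plan is to handle the three conclusions separately, with parts (2) and (3) being short and part (1) requiring the main work. Throughout, the hypothesis $\|\boldf^{(1)} - \boldf^{(2)}\|_\infty \leq 1/3$ forces $\beta_1 = 3\|\boldf^{(1)} - \boldf^{(2)}\|_\infty \in [0, 1]$, so each $f_k^{(3)} = (1 - \beta_1) f_k^{(1)} + \beta_1$ is a convex combination of $f_k^{(1)}$ and $1$.

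For part (3), I would just expand $\boldf^{(3)} - \boldf^{(1)} = \beta_1(\boldone - \boldf^{(1)})$ and bound its sup-norm by $\beta_1 \|\boldone - \boldf^{(1)}\|_\infty$. Since $\|\boldf^{(1)}\|_1 = V$ with nonnegative entries, averaging forces $\|\boldf^{(1)}\|_\infty \geq 1$, from which $\|\boldone - \boldf^{(1)}\|_\infty \leq \|\boldf^{(1)}\|_\infty$ follows (the same estimate already used in the proof of \cref{thmlog}). For part (2), when $f_k^{(1)} \geq 2/3$, both endpoints of the convex combination defining $f_k^{(3)}$ lie at or above $2/3$, hence $f_k^{(3)} \geq 2/3$ is immediate.

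The core of the lemma is part (1). Writing $a = f_k^{(1)}$, $b = f_k^{(2)}$, and $c = f_k^{(3)} = a + \beta_1(1 - a)$, my strategy is to establish the chain $b \leq c \leq 1$ and then invoke the fact that $h$ is decreasing on $[0, 1]$ to conclude $h(c) \leq h(b)$. The upper bound $c \leq 1$ is immediate because $c$ is a convex combination of $a < 2/3$ and $1$. For $b \leq c$, the key arithmetic step is
\[
c - a = \beta_1(1 - a) > \beta_1 \cdot \tfrac{1}{3} = \|\boldf^{(1)} - \boldf^{(2)}\|_\infty \geq |a - b|,
\]
which is valid precisely because $a < 2/3$ gives $1 - a > 1/3$; rearranging yields $b \leq a + |a-b| < c$.

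I do not expect any individual step to be hard; the real content is in recognizing why the threshold $2/3$ and the factor $3$ in the definition of $\beta_1$ are matched. The factor $3$ is calibrated so that the inward pull $\beta_1(1-a)$ dominates the perturbation $|a-b|$ exactly on the regime $a < 2/3$, which is where monotonicity of $h$ on $[0, 1]$ can turn an ordering of arguments into an ordering of entropies. The complementary regime $a \geq 2/3$ is then carved off by claim (2), so that in subsequent use one can dispatch those indices through the ``bounded away from zero'' machinery already developed in the proof of \cref{thm1}.
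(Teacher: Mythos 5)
Your proposal is correct and follows essentially the same route as the paper's proof: the identical inequality $\beta_1(1-f_k^{(1)}) \geq |f_k^{(1)} - f_k^{(2)}|$ on the regime $f_k^{(1)} < 2/3$, followed by monotonicity of $h$ on $[0,1]$, with parts (2) and (3) handled exactly as in the paper. The only cosmetic difference is your use of strict inequalities in part (1), which degenerates when $\beta_1 = 0$, but that case is trivial since then $\boldf^{(3)} = \boldf^{(1)} = \boldf^{(2)}$.
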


\begin{proof}
	For those $k$ such that $f_k^{(1)} < \frac{2}{3}$, we have $1 - f_k^{(1)} \geq \frac{1}{3}$. Thus 
	\begin{displaymath}
		f_k^{(3)} - f_k^{(1)} = \beta_1 (1 - f_k^{(1)}) \geq 3|f^{(1)}_k - f^{(2)}_k| \cdot (1 - f_k^{(1)}) \geq |f^{(1)}_k - f^{(2)}_k| \geq f^{(2)}_k - f^{(1)}_k,
	\end{displaymath}
	which yields $f_k^{(3)} \geq f_k^{(2)}$. Since $f_k^{(3)}$ is the convex combination of $1$ and $f_k^{(1)}$, we have $0 \leq f_k^{(3)} \leq 1$. Since $h(\cdot)$ is monotonically decreasing on $[0,1]$, we conclude that $h(f^{(3)}_k) \leq h(f^{(2)}_k)$.
	
	The second property is obvious since $f_k^{(3)}$ lies between $f_k^{(1)}$ and $1$.
	
	As for the third property, it should be noted that $\| \boldf^{(1)} \|_1 = V$ implies $\|\boldf^{(1)}\|_{\infty} \geq 1$. Therefore,
	\begin{align*}
			\| \boldf^{(3)} - \boldf^{(1)} \|_{\infty} & = \beta_1 \| \boldone - \boldf^{(1)} \|_{\infty} \\
			& \leq \max(1, \| \boldf^{(1)} \|_{\infty}-1) \beta_1
			\leq 3 \| \boldf^{(1)} \|_{\infty} \| \boldf^{(1)} - \boldf^{(2)} \|_{\infty}.
	\end{align*}
\end{proof}

The first property in \cref{lm:infperturb} shows how we deal with the small components, and this only holds when $\beta_1$ is proportional to the difference between $\boldf^{(1)}$ and $\boldf^{(2)}$ measured by the infinity norm, leading to the form of the right-hand side in the conclusion of \cref{thm2}. For the remaining terms, an $O(1)$ lower bound exists, so that the same technique as \cref{thm1} can be applied. The details of the proof are given below:

\begin{proof}[Proof of \cref{thm2}]
	By \cref{lm:infperturb}, we could pick $\beta_1 = 3 \| \boldf^{n+1} - \boldf(t_{n+1})\|_{\infty}$ and construct
	\begin{equation} \label{eq:thm2f1}
		\boldf^{n+1,1} = \boldf^{n+1} + \beta_1 (\boldone - \boldf^{n+1}),
	\end{equation}
	If $\eta(\boldf^{n+1,1}) \leq \eta(\boldf(t_{n+1}))$, the proof is already completed. If $\eta(\boldf^{n+1,1}) > \eta(\boldf(t_{n+1}))$, we construct $\hat{\boldf}^{n+1}$ as \cref{eq:afterperturb} such that $\eta(\hat{\boldf}^{n+1}) = \eta(\boldf(t_{n+1}))$, and thus $\beta = \beta_1 + \beta_2 - \beta_1 \beta_2$.
	According to \cref{lm:infperturb},
	those components $i$ where $h(f_i^{n+1,1}) > h(f_i(t_{n+1}))$ satisfy $f_i^{n+1,1} \geq \frac{2}{3}$. Therefore, \cref{lm5col1} could be applied with $C_0 = \frac{2}{3}$, and we could mimic the proof of \cref{thm1} with only replacement from \cref{lm:hdiff} to \cref{lm5col1} in the proof. As a result, 
	by the conclusion of \cref{thm1}, it holds that
	\begin{displaymath}
		\| \beta_2  ( \boldone - \boldf^{n+1,1})\|_2 \leq  M_1 \| \boldf(t_{n+1}) - \boldf^{n+1,1} \|_{2},
	\end{displaymath}
	where $M_1 =4 \max \left(1, | \log (C_0) | \right) \|\boldf^{n+1,1}\|_{\infty} (1 + \sqrt{\| \boldf(t_{n+1}) \|_{\infty} } )$ taken from the proof of \cref{thm1}. Moreover, $\boldf^{n+1,1}$ in $M_1$ could be replaced by $\boldf^{n+1}$ since $\| \boldf^{n+1,1}\|_{\infty} \leq \| \boldf^{n+1} \|_{\infty} $. Then, similar to the second step in the proof of \cref{thmlog}, it holds that
	\begin{align*}
			\| \hat{\boldf}^{n+1} - \boldf^{n+1} \|_2 & \leq  \| \hat{\boldf}^{n+1} - \boldf^{n+1,1} \|_2 + \| \boldf^{n+1,1} - \boldf^{n+1} \|_2 \\
			& \leq M_1 \| \boldf(t_{n+1}) - \boldf^{n+1,1} \|_{2} + \| \boldf^{n+1,1} - \boldf^{n+1} \|_2 \\
			& \leq M_1 \sqrt{V} \| \boldf(t_{n+1}) - \boldf^{n+1,1} \|_{\infty} + \sqrt{V} \| \boldf^{n+1,1} - \boldf^{n+1} \|_{\infty} \\
			& \leq M_1 \sqrt{V} \| \boldf(t_{n+1}) - \boldf^{n+1} \|_{\infty} + \sqrt{V} (M_1 + 1) \| \boldf^{n+1,1} - \boldf^{n+1} \|_{\infty} \\
			& \leq \left(  M_1  \sqrt{V} + 3 \sqrt{V} (M_1 + 1)  \| \boldf^{n+1} \|_{\infty}  \right)  \| \boldf(t_{n+1}) - \boldf^{n+1} \|_{\infty},
	\end{align*}
	where the last ``$\leq$'' is the result of \cref{lm:infperturb}.
	This completes the proof since $\| \beta  ( \boldone - \boldf^{n+1})\|_2 = \| \hat{\boldf}^{n+1} - \boldf^{n+1} \|_2$.
\end{proof}

\subsection{Proof of \cref{thm3}}
In this subsection, we will prove \cref{thm3}. Before that, we would like to introduce two lemmas. \cref{lm: opt} comes from optimization, which illustrates the infinity norm of optimal solution could be bounded by the $L^2$ norm of it. Based on \cref{lm: opt}, we make a decomposition of the (relative) entropy function in \cref{eq:decomH} and then introduce \cref{lemma:h2h1} to estimate the difference of decomposed entropy functions.

As assumed in the theorem, we suppose all the components of $\boldf$ are sorted in the ascending order:
\begin{displaymath}
	f_1 \leq f_2 \leq \cdots \leq f_N = \| \boldf \|_{\infty}.
\end{displaymath}
Note that this does not affect the definition of entropy and the numerical scheme for the entropy fix.

\begin{lemma} \label{lm: opt}
	For any $C_1, C_f \in (0,1]$ and positive integer $N$, let $I_1 = \min \{ I \mid \sum_{i=1}^I \Delta v_i \geq C_1 V \}$. If $\boldf \in \mathbb{R}_+^N$ satisfies
	\begin{displaymath}
	    f_i \leq 1/2 \text{ for all } i = 1,\ldots,I_1 \qquad \text{and} \qquad \frac{1}{|\log f_1|} \geq \frac{C_f}{|\log f_{I_1}|},
	\end{displaymath}
	then when $\varepsilon < \frac{1}{2} \sqrt{C_1 V}$, the solution $\boldg^* = (g^*_1, \ldots, g^*_{I_1})^T \in \mathbb{R}^{I_1}$ of the following optimization problem
	\begin{equation} \label{eq:opt}
		\argmin_{g_1, \ldots, g_{I_1}} \sum_{i=1}^{I_1} h(f_i + g_i) \Delta v_i, \qquad \text{s.t. } \sum_{i=1}^{I_1} g_i^2 \Delta v_i \leq \varepsilon^2,
	\end{equation}
	satisfies $0 \le g_{I_1}^* \le \cdots \le g_1^* \le (\sqrt{C_1 V} C_f)^{-1} \varepsilon$ and $C_f \leq g_{I_1}^* / g_1^* \leq 1$.
\end{lemma}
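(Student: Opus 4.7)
The plan is to recognize the problem as a strictly convex program over a closed $L^2$-ball and exploit Lagrange duality. Since $h(x) = x\log x - x$ is strictly convex on $\mathbb{R}_+$, the objective is strictly convex, so a unique minimizer $\boldg^*$ exists. The unconstrained minimizer of each term $h(f_i+g_i)$ is $g_i = 1-f_i$ (because $h'(x) = \log x$ vanishes at $x=1$), and the corresponding squared $L^2$-norm $\sum_{i=1}^{I_1}(1-f_i)^2\Delta v_i$ is at least $(1/2)^2 \sum_{i=1}^{I_1}\Delta v_i \geq C_1 V/4$ since $f_i \leq 1/2$ on $i \leq I_1$. Thus when $\varepsilon < \frac{1}{2}\sqrt{C_1 V}$ the unconstrained optimum lies outside the feasible set, the constraint must be active at $\boldg^*$, and the KKT conditions give $\log(f_i+g_i^*) + 2\lambda g_i^* = 0$ with $\lambda > 0$, or equivalently $f_i + g_i^* = e^{-2\lambda g_i^*}$.

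Next, I would introduce $G(g) := e^{-2\lambda g} - g$, which is strictly decreasing on $\mathbb{R}$ (its derivative is $-2\lambda e^{-2\lambda g} - 1 < 0$) with $G(0)=1$, so that the KKT system reduces to $f_i = G(g_i^*)$. Since $f_i \in (0,1/2] \subset (0,1) = (G(+\infty \text{ side}), G(0))$, we conclude $g_i^* > 0$, and since $f_1 \leq f_2 \leq \cdots \leq f_{I_1}$ the monotonicity $g_1^* \geq g_2^* \geq \cdots \geq g_{I_1}^* > 0$ follows because $G^{-1}$ is decreasing. The active constraint $\sum_{i=1}^{I_1}(g_i^*)^2\Delta v_i = \varepsilon^2$ together with $\sum_{i=1}^{I_1}\Delta v_i \geq C_1 V$ then yields $g_{I_1}^* \leq \varepsilon/\sqrt{C_1 V}$; in particular $g_{I_1}^* < 1/2$, which confirms $f_{I_1}+g_{I_1}^* < 1$ so the logarithms all have a consistent sign.

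The main obstacle is the ratio lower bound $g_{I_1}^*/g_1^* \geq C_f$, after which the upper bound $g_1^* \leq g_{I_1}^*/C_f \leq \varepsilon/(\sqrt{C_1 V}\,C_f)$ follows immediately. Because $G$ is strictly decreasing, the inequality $g_{I_1}^* \geq C_f g_1^*$ is equivalent to $G(C_f g_1^*) \geq f_{I_1}$. Using $e^{-2\lambda g_1^*} = f_1 + g_1^*$, I evaluate
\begin{displaymath}
G(C_f g_1^*) = \bigl(e^{-2\lambda g_1^*}\bigr)^{C_f} - C_f g_1^* = (f_1 + g_1^*)^{C_f} - C_f g_1^*.
\end{displaymath}
The key observation is that $\phi(y) := (f_1+y)^{C_f} - C_f y$ satisfies $\phi'(y) = C_f\bigl[(f_1+y)^{C_f-1} - 1\bigr] \geq 0$ on $[0,\,1-f_1]$ because $C_f \leq 1$ and $f_1+y \leq 1$, so $\phi$ is nondecreasing there and hence $\phi(g_1^*) \geq \phi(0) = f_1^{C_f}$. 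Finally, the hypothesis $1/|\log f_1| \geq C_f/|\log f_{I_1}|$ rewrites as $|\log f_{I_1}| \geq C_f|\log f_1|$, i.e.\ $f_{I_1} \leq f_1^{C_f}$. Chaining these gives $G(C_f g_1^*) \geq f_1^{C_f} \geq f_{I_1}$, which closes the argument.
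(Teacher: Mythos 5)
Your proof is correct, and the crucial step is argued by a genuinely different and more elementary route than the paper's. Both proofs begin identically: the unconstrained minimizer $g_i = 1-f_i$ has squared norm at least $C_1V/4 > \varepsilon^2$, so the constraint is active, $\lambda>0$, and the stationarity condition $\log(f_i+g_i^*)+2\lambda g_i^*=0$ holds; monotonicity $g_{I_1}^*\le\cdots\le g_1^*$ and the bound $g_{I_1}^*\le \varepsilon/\sqrt{C_1V}$ then follow in essentially the same way. The divergence is in the ratio bound $g_{I_1}^*/g_1^*\ge C_f$: the paper inverts the map $x\mapsto -\log(f_i+x)/(2x)$ explicitly via the Lambert $W$ function, computes the derivative of the ratio $\sigma_i^{-1}(y)/\sigma_1^{-1}(y)$ in the multiplier $y$, shows it is decreasing, and identifies its limit as $y\to+\infty$ with $\log(f_i)/\log(f_1)\ge C_f$. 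You instead rewrite stationarity as $f_i+g_i^*=e^{-2\lambda g_i^*}$, observe that raising this to the power $C_f$ gives $G(C_fg_1^*)=(f_1+g_1^*)^{C_f}-C_fg_1^*$ exactly, and reduce everything to the elementary monotonicity of $\phi(y)=(f_1+y)^{C_f}-C_fy$ together with the hypothesis in the equivalent form $f_{I_1}\le f_1^{C_f}$. This avoids the Lambert $W$ machinery and the asymptotic/derivative analysis entirely, at no loss of generality; the paper's version does yield the slightly stronger statement $g_i^*/g_1^*\ge C_f$ for every $i\le I_1$, but that also follows from yours since $g_i^*\ge g_{I_1}^*$. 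One small presentational slip: to know $\phi$ is nondecreasing on $[0,g_1^*]$ you need $f_1+g_1^*\le 1$, which your remark about $g_{I_1}^*<1/2$ does not address (wrong index); the correct and immediate justification is that $f_1+g_1^*=e^{-2\lambda g_1^*}\le 1$ because $\lambda>0$ and $g_1^*\ge 0$.
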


\begin{proof}
	The proof utilizes the Karush–Kuhn–Tucker (KKT) sufficient conditions for optimization problems \cite[Chapter 3.5]{nonlinearopt2006}. It is easy to verify that both the objective function and the constraint are continuously differentiable convex functions with respect to $(g_1, \ldots, g_{I_1})^T$. Therefore, if the following conditions hold for $\lambda^* \in \mathbb{R}$ and $\boldg^* = (g_1^*, \ldots, g_{I_1}^*)^T$,
	\begin{equation} \label{kkteq}
		\left\{ 
		\begin{aligned}
			& h'(f_i  + g_i^*) + 2 \lambda^* g_i^* = 0, \qquad \forall \  1 \leq i \leq I_1, \\
			& \sum_{i=1}^{I_1} (g_i^*)^2 \Delta v_i \leq \varepsilon^2, \\
			& \lambda^* \geq 0, \\
			& \lambda^* (\sum_{i=1}^{I_1} (g_i^*)^2 \Delta v_i - \varepsilon^2) = 0,
		\end{aligned}
		\right.
	\end{equation}
	then $\boldg^*$ is the global minimum of the optimization problem.
	
	First, we claim that $\lambda^* \neq 0$, so that
	\begin{equation} \label{eqconstraint}
	    \sum_{i=1}^{I_1} (g_i^*)^2 \Delta v_i = \varepsilon^2
	\end{equation}
	due to the last equation in \cref{kkteq}. If $\lambda^*$ equals $0$, then $h'(f_i + g_i^*) = 0$, which yields $g_i^* = 1 - f_i \geq \frac{1}{2}$. Therefore,
	\begin{displaymath}
		\sum_{i=1}^{I_1} (g_i^*)^2 \Delta v_i \geq \frac{\sum_{i=1}^{I_1} \Delta v_i }{4} \geq \frac{C_1 V}{4} > \varepsilon^2,
	\end{displaymath}
	which contradicts with the second inequality in \cref{kkteq}.
	
	Now we would like to establish the existence and uniqueness of the solution. We first focus on the first equation in \cref{kkteq}. For any $1 \leq i \leq I_1$ and fixed $\lambda^*>0$, there exist one unique $g_i^* \in (0,1)$ satisfying $h'(f_i  + g_i^*) + 2 \lambda^* g_i^* = 0$. This is because the function $\zeta_i(x) := h'(f_i + x) + 2\lambda^* x$ is monotonically increasing, and
	\begin{displaymath}
	    \zeta(0) = \log f_i \leq \log (\frac{1}{2}) < 0, \qquad \zeta(1) \geq 2 \lambda^* >0.
	\end{displaymath}
	Thus it remains to demonstrate that $\lambda^*$ is unique. Inspired by the first equation in \cref{kkteq}, we define
	\begin{displaymath}
	    \sigma(x) = -\frac{h'(f_i + x)}{2x} = -\frac{\log(f_i + x)}{2x}, \qquad x \in (0,1-f_i].
	\end{displaymath}
	Then its inverse function $\sigma_i^{-1}(y)$ satisfies
	\begin{equation} \label{eq:eta_inverse}
	    y = -\frac{\log(f_i + \sigma_i^{-1}(y))}{2\sigma_i^{-1}(y)} \quad \text{and} \quad
	    \sigma_i^{-1}(y) = \frac{W_0(2y e^{2 y f_i}) - 2 y f_i}{2y},
	\end{equation}
	where $W_0(\cdot)$ is the Lambert $W$ function \cite{Corless1996lambertw} satisfying $W_0(x) e^{W_0(x)} = x$. For $\sigma_i(\cdot)$ and $\sigma_i^{-1}(\cdot)$, we have the following properties:
	\begin{enumerate}
    	\item $\sigma_i(x)$ is monotonically decreasing, so is $\sigma_i^{-1}(x)$ (this requires $f_i \leq \frac{1}{2}$);
    	\item $\sigma_i(g_i^*) = \lambda^*$ and $g_i^* = \sigma_i^{-1}(\lambda^*)$;
    	\item $\sigma_i^{-1}(0) = 1 - f_i \geq \frac{1}{2}$ and $\sigma_i^{-1}(y) \rightarrow 0$ as $y \rightarrow +\infty$.
	\end{enumerate}
	Here the limit of $\sigma_i^{-1}(y)$ at $+\infty$ can be obtained by the inequality (see \cite{Hoorfar2008inequalities})
	\begin{displaymath}
	     W_0(x) \leq \log(x) - \log (\log(x)) + \frac{e}{e-1} \frac{\log (\log(x))}{\log(x)}, \qquad \forall x \geq e.
	\end{displaymath}
	Furthermore, if we define
	\begin{displaymath}
	    \Xi(y) = \sum_{i=1}^{I_1} [\sigma_i^{-1}(y)]^2 \Delta v_i, \qquad y \in [0,+\infty),
	\end{displaymath}
	then by the three properties of $\sigma_i$, we have
	\begin{enumerate}
	    \item $\Xi(y)$ is a decreasing function since each $\sigma_i^{-1}(y)$ is monotonically decreasing;
	    \item $\Xi(\lambda^*) = \varepsilon^2$ according to \cref{eqconstraint};
	    \item $\Xi(0) \geq \frac{1}{4} \sum_{i=1}^{I_1} \Delta v_i > \varepsilon^2$, and $\Xi(y) \rightarrow 0$ as $y \rightarrow +\infty$.
	\end{enumerate}
	These properties show the existence and uniqueness of $\lambda^*$.
	
	Next, we will show $g_{I_1}^* \leq \cdots \leq g_1^*$. For any $1 \leq i \leq j \leq I_1$, $f_i \leq f_j$ implies
	\begin{displaymath}
	    \sigma_j(g_j^*) = \lambda^* = \sigma_i(g_i^*) \geq \sigma_j(g_i^*).
	\end{displaymath}
	Using the fact that $\sigma_j(\cdot)$ is decreasing, we see that $g_j^* \leq g_i^*$. To get the bound of $g_1^*$, we need the following two results:
	\begin{itemize}
	    \item By \cref{eq:eta_inverse}, we have
    	\begin{displaymath}
    	    \lim_{y \rightarrow +\infty} \frac{\sigma_i^{-1}(y)}{\sigma_1^{-1}(y)} = 
    	    \lim_{y \rightarrow +\infty} \frac{\log(f_i + \sigma_i^{-1}(y))}{\log(f_1 + \sigma_1^{-1}(y))} = \frac{\log(f_i)}{\log(f_1)} \geq C_f;
    	\end{displaymath}
    	\item By straightforward calculation, we have
    	\begin{displaymath}
            \frac{\mathrm{d}}{\mathrm{d}y} \left(\frac{\sigma_i^{-1}(y)}{\sigma_1^{-1}(y)} \right) = \frac{W_0(2y e^{2 y f_1}) - W_0(2y e^{2 y f_i})}{y(1 + W_0(2y e^{2 y f_1}))(1 + W_0(2y e^{2 y f_i}))}
            \frac{\sigma_i^{-1}(y)}{\sigma_1^{-1}(y)} \leq 0.
    	\end{displaymath}
	\end{itemize}
	These results indicate that
	\begin{displaymath}
	    \frac{g_i^*}{g_1^*} = \frac{\sigma_i^{-1}(\lambda^*)}{\sigma_1^{-1}(\lambda^*)} \geq C_f,
	\end{displaymath}
	and thus
	\begin{displaymath}
	    g_1^* = \varepsilon \left( \sum_{i=1}^{I_1} \frac{(g_i^*)^2}{(g_1^*)^2} \Delta v_i\right)^{-1/2} \leq
	    \varepsilon \left( \sum_{i=1}^{I_1} C_f^2 \Delta v_i \right)^{-1/2} \leq \frac{\varepsilon}{\sqrt{C_1 V} C_f}.
	\end{displaymath}
	This completes the proof.
\end{proof}

One corollary of the above lemma is the extension to a continuous version, with identical optimal solution $\boldg^*$ in the sense of piesewise constant function. For the ease of this extension, we would like to introduce the (partial) sum of first $i$ parameters $\Delta v_i$ as
\begin{equation} \label{eq: partialSum}
   S_0 = 0, \qquad S_i = \sum_{j=1}^i \Delta v_j, \qquad i = 1, \dots, N.
\end{equation}
Then we have the following lemma.
\begin{corollary} \label{lm: optcont}
    Under the condition of \cref{lm: opt}, if a piesewise constant function defined on $(0, S_{I_1}]$ is introduced as
    \begin{displaymath}
       f(v) = f_i, \qquad v \in (S_{i-1}, S_i], \quad i = 1, \dots, I_1,
    \end{displaymath}
    then the solution $g^*(v) \in L^2((0, S_{I_1}])$ of the following optimization problem
    \begin{equation} \label{eq: contOptproblem}
       \argmin_{g \in L^2((0, S_{I_1}])} \int_{0}^{S_{I_1}} h(f(v) + g(v)) \mathrm{d}v, \qquad \text{s.t. } \| g\|_2^2 :=\int_{0}^{S_{I_1}} (g(v))^2 \mathrm{d}v \leq \varepsilon^2,
    \end{equation}
    is equal to a piecewise constant function \text{a.e.} as
    \begin{displaymath}
       g^*(v) = g^*_i, \qquad v \in (S_{i-1}, S_i], \quad i = 1, \dots, I_1,
    \end{displaymath}
    where $g^*_i$ is the component of the optimal solution $\boldg^*$ in \cref{lm: opt}.
\end{corollary}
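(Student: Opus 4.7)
The plan is to reduce the continuous optimization problem \cref{eq: contOptproblem} to the finite-dimensional problem \cref{eq:opt} of \cref{lm: opt} via a standard averaging argument based on Jensen's inequality and Cauchy--Schwarz, and then upgrade equality of optimal values to uniqueness a.e.\ using the strict convexity of $h$.

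First, for any feasible $g \in L^2((0, S_{I_1}])$, I would introduce its piecewise average $\tilde{g}$ defined by $\tilde{g}(v) = \bar{g}_i$ for $v \in (S_{i-1}, S_i]$, where $\bar{g}_i = \Delta v_i^{-1} \int_{S_{i-1}}^{S_i} g(v)\, \mathrm{d}v$. Because $f(v) \equiv f_i$ on each subinterval and $h$ is convex on $[0,\infty)$, Jensen's inequality gives
\begin{displaymath}
\int_{S_{i-1}}^{S_i} h(f_i + g(v))\, \mathrm{d}v \;\geq\; h(f_i + \bar{g}_i)\, \Delta v_i,
\end{displaymath}
so that summing in $i$ shows the objective value at $\tilde{g}$ is no larger than at $g$. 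Simultaneously, Cauchy--Schwarz on each subinterval yields $\bar{g}_i^2\, \Delta v_i \leq \int_{S_{i-1}}^{S_i} g(v)^2\, \mathrm{d}v$, so $\|\tilde{g}\|_2 \leq \|g\|_2 \leq \varepsilon$ and $\tilde{g}$ remains feasible.

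Second, I would observe that the restricted problem of minimizing over piecewise constant $g$ with values $g_i$ on $(S_{i-1}, S_i]$ coincides exactly with the discrete problem \cref{eq:opt}. Its minimizer is $\boldg^* = (g_1^*, \ldots, g_{I_1}^*)^T$, and this minimizer is unique as established through the KKT analysis in the proof of \cref{lm: opt} (the unique Lagrange multiplier $\lambda^*$ determines each $g_i^*$ uniquely via the monotone map $\sigma_i^{-1}$). Combined with the averaging reduction, the piecewise constant function built from $g^*_1, \ldots, g^*_{I_1}$ attains the infimum in \cref{eq: contOptproblem}.

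Finally, to promote this to uniqueness a.e., I would invoke the strict convexity of $h$ on $(0,\infty)$, which follows from $h''(x) = 1/x > 0$. Jensen's inequality above is strict unless $g$ is a.e.\ constant on each $(S_{i-1}, S_i]$. So if an optimal $g$ disagreed with $\tilde g$ on a set of positive measure within some subinterval, replacing $g$ by $\tilde g$ would strictly decrease the objective, contradicting optimality. Combined with the uniqueness from the discrete problem, this forces $g^*(v) = g_i^*$ a.e.\ on each $(S_{i-1}, S_i]$. The only subtlety I anticipate concerns the domain of $h$: one must verify that $f(v) + g(v) \geq 0$ a.e.\ for any sensibly feasible $g$ (so that $h(f+g)$ is integrable and Jensen's hypothesis applies), but this is preserved by the averaging $g \mapsto \tilde g$ since $f_i + \bar g_i$ is the mean of the nonnegative quantity $f_i + g(v)$ on $(S_{i-1}, S_i]$. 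The argument is therefore routine once the averaging step is set up correctly.
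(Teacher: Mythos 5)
Your proposal is correct and follows essentially the same route as the paper's proof: averaging on each subinterval, Jensen's inequality for the objective, the $L^2$ inequality (Cauchy--Schwarz/H\"older) for feasibility, and strict convexity of $h$ to force the minimizer to be a.e.\ constant on each $(S_{i-1},S_i]$, after which the discrete problem \cref{eq:opt} identifies the constants. The paper phrases this as a contradiction argument applied directly to an optimal $g^*$, while you reduce an arbitrary feasible $g$ first, but the content is the same.
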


\begin{proof}
To prove the corollary, it suffices to show that for every $i = 1,\cdots,I_1$, the function $g^*(v)$ is a constant on $(S_{i-1}, S_i]$ except for a set with measure zero, so that the optimization problem \cref{eq: contOptproblem} is essentially equivalent to \cref{eq:opt}. Suppose that $g^*(v)$ is essentially not a constant on $(S_{i-1}, S_i]$ for some $i$. We define the function $\hat{g}(v)$ by
\begin{displaymath}
\hat{g}(v) = \left\{ \begin{array}{ll}
  \frac{1}{\Delta v_i} \int_{S_{i-1}}^{S_i} g^*(v) \,\mathrm{d} v, & \text{if } v \in (S_{i-1}, S_i], \\
  g^*(v), & \text{otherwise}.
\end{array} \right.
\end{displaymath}
By H\"{o}lder's inequality (on $(S_{i-1}, S_i]$), it is easy to find $\|\hat{g}\|_2^2 \leq \|g^*\|_2^2 \leq \varepsilon^2$. Moreover, using Jensen's inequality on convex function $h(f_i + \cdot)$, we obtain
\begin{equation} \label{Jensen}
\int_{S_{i-1}}^{S_i} h(f(v) + \hat{g}(v)) \,\mathrm{d}v = \Delta v_i h(f_i + \hat{g}(v)) \leq
\int_{S_{i-1}}^{S_i} h(f_i + g^*(v)) \,\mathrm{d}v.
\end{equation}
Note that $g^*(\cdot)$ is the optimal solution, implying that the equality must hold for \eqref{Jensen}.
However, since $h(f_i +\cdot)$ is strictly convex, 
the equality holds only when $g^*(v)$ is a constant on $(S_{i-1}, S_i]$, which contradicts our assumption. This completes the proof of the corollary.
\end{proof}

Another important corollary of \cref{lm: opt} is to pick $\beta_1 = O( \| \boldf^{n+1} - \boldf(t_{n+1}) \|_2)$ and construct $\boldf^{n+1,1}$ following \cref{eq:thm2f1}, such that the entropy of $\boldf^{n+1,1}$ is less than the entropy of $\boldf(t_{n+1})$ in the range of $i \leq I_1$.

\begin{corollary} \label{col: H1}
	Let $\varepsilon := \| \boldf^{n+1} - \boldf(t_{n+1}) \|_2$. Suppose $\boldf^{n+1}$ satisfies the condition of \cref{lm: opt} and $\varepsilon < \frac{\sqrt{C_1 V}C_f}{2}$. Let $\beta_1 = \frac{2 \varepsilon}{\sqrt{C_1 V} C_f}$ and
	\begin{equation}
		\boldf^{n+1,1} = \boldf^{n+1} + \beta_1 (\boldone - \boldf^{n+1}).
	\end{equation}
	Then $\boldf^{n+1,1}$ satisfies
	\begin{equation} \label{eq:colopt}
		\sum_{i=1}^{I_1} h(f^{n+1,1}_i) \Delta v_i \leq \sum_{i=1}^{I_1} h(f_i(t_{n+1})) \Delta v_i.
	\end{equation}
\end{corollary}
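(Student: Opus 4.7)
The plan is to interpret the corollary as a direct consequence of \cref{lm: opt}, by viewing both the exact error $\boldf(t_{n+1}) - \boldf^{n+1}$ and the constructed perturbation $\boldf^{n+1,1} - \boldf^{n+1}$ as feasible candidates for the constrained optimization appearing in \cref{lm: opt}, and comparing them to the minimizer. First I would record the two increments $\tilde g_i := f_i(t_{n+1}) - f_i^{n+1}$ and $g_i := \beta_1(1 - f_i^{n+1}) = f_i^{n+1,1} - f_i^{n+1}$ for $i = 1, \ldots, I_1$. The hypothesis $\varepsilon < \sqrt{C_1 V}\, C_f / 2 \leq \sqrt{C_1 V}/2$ (using $C_f \leq 1$) ensures the threshold in \cref{lm: opt} is met, so the minimizer $\boldg^* = (g_1^*, \ldots, g_{I_1}^*)^T$ with minimum value $V^* := \sum_{i=1}^{I_1} h(f_i^{n+1} + g_i^*) \Delta v_i$ is available.

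Next I would observe that the truncated error vector $(\tilde g_1, \ldots, \tilde g_{I_1})^T$ is feasible: $\sum_{i=1}^{I_1} \tilde g_i^2 \Delta v_i \leq \|\boldf(t_{n+1}) - \boldf^{n+1}\|_2^2 = \varepsilon^2$. By optimality of $\boldg^*$ this already gives
\begin{displaymath}
V^* \leq \sum_{i=1}^{I_1} h(f_i^{n+1} + \tilde g_i) \Delta v_i = \sum_{i=1}^{I_1} h(f_i(t_{n+1})) \Delta v_i,
\end{displaymath}
so it remains to show $\sum_{i=1}^{I_1} h(f_i^{n+1,1}) \Delta v_i \leq V^*$.

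For this substantive step I would exploit the $L^\infty$ bound $g_i^* \leq g_1^* \leq (\sqrt{C_1 V}\, C_f)^{-1} \varepsilon$ from \cref{lm: opt} together with the hypothesis $f_i^{n+1} \leq 1/2$ (inherited from \cref{lm: opt}). The prescribed choice $\beta_1 = 2\varepsilon/(\sqrt{C_1 V}\, C_f)$ then yields
\begin{displaymath}
g_i = \beta_1(1 - f_i^{n+1}) \geq \tfrac{1}{2}\beta_1 = \frac{\varepsilon}{\sqrt{C_1 V}\, C_f} \geq g_i^*, \qquad 1 \leq i \leq I_1.
\end{displaymath}
Since $\varepsilon < \sqrt{C_1 V}\, C_f / 2$ forces $\beta_1 < 1$, $f_i^{n+1,1}$ is a convex combination of $f_i^{n+1} \in [0,1/2]$ and $1$, hence lies in $[0,1]$; similarly $f_i^{n+1} + g_i^* \in [0,1]$. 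Because $h'(x) = \log x \leq 0$ on $(0,1]$, $h$ is nonincreasing on $[0,1]$, and $f_i^{n+1,1} \geq f_i^{n+1} + g_i^*$ gives $h(f_i^{n+1,1}) \leq h(f_i^{n+1} + g_i^*)$ for each $i$. Summing with the weights $\Delta v_i$ and chaining with the previous display produces \cref{eq:colopt}.

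The argument is a clean consequence of \cref{lm: opt}, so I do not anticipate a substantive obstacle; the lemma has already done the heavy lifting by supplying the sharp bound on $\|\boldg^*\|_\infty$. The only care needed is in verifying the three quantitative comparisons ($\beta_1 < 1$, $\beta_1(1 - f_i^{n+1}) \geq g_i^*$, and the monotonicity range $[0,1]$ for $h$), all of which reduce to the stated bound on $\varepsilon$ together with the upper bound on $g_1^*$ from \cref{lm: opt}.
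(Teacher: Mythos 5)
Your proposal is correct and follows essentially the same route as the paper's proof: feasibility of the truncated error vector plus optimality of $\boldg^*$ gives the first comparison, and the bound $g_i^* \le g_1^* \le (\sqrt{C_1 V}\,C_f)^{-1}\varepsilon$ from \cref{lm: opt} together with $1 - f_i^{n+1} \ge 1/2$ and the monotonicity of $h$ on $[0,1]$ gives the second. Your explicit check that $C_f \le 1$ guarantees the threshold $\varepsilon < \tfrac{1}{2}\sqrt{C_1 V}$ of \cref{lm: opt} is a small point the paper leaves implicit.
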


\begin{proof}
    Let $g_1^*, \ldots, g_{I_1}^*$ be the solution of the optimization problem \cref{eq:opt}. Since
    \begin{displaymath}
        \sum_{i=1}^{I_1} (f_i^{n+1} - f_i(t_{n+1}))^2 \Delta v_i \leq \| \boldf^{n+1} - \boldf(t_{n+1}) \|_2^2 = \varepsilon^2,
    \end{displaymath}
    it holds that
	\begin{displaymath}
	    \sum_{i=1}^{I_1} h(f^{n+1}_i + g_i^*) \Delta v_i \leq \sum_{i=1}^{I_1} h(f_i(t_{n+1})) \Delta v_i.
	\end{displaymath}
	To prove \cref{eq:colopt}, it suffices to show 
	\begin{equation} \label{eq:bridgeminpro}
	    \sum_{i=1}^{I_1} h(f^{n+1,1}_i) \Delta v_i \leq \sum_{i=1}^{I_1} h(f^{n+1}_i + g_i^*) \Delta v_i.
	\end{equation}
	By the conclusion of \cref{lm: opt},
	\begin{displaymath}
		f_i^{n+1,1} = f_i^{n+1} + \beta_1 (1 - f_i^{n+1}) \geq  f_i^{n+1} + \frac{\beta_1}{2} = f_i^{n+1} + \frac{\varepsilon}{\sqrt{C_1 V} C_f} \geq f_i^{n+1} + g_i^*,
	\end{displaymath}
	for all $1 \leq i \leq I_1$. Noticing that $\beta_1 < 1$ by the constraint $\varepsilon < \frac{\sqrt{C_1 V} C_f}{2}$, we obtain $f_i^{n+1,1} < 1$. Hence, the monotonicity of $h(\cdot)$ yields
	\begin{displaymath}
		h(f_i^{n+1,1}) \leq h(f_i^{n+1} + g_i^*), \qquad \forall i = 1,\ldots, I_1.
	\end{displaymath}
	Multiplying $\Delta v_i$ and summing up the above inequalities for $i$ yields \cref{eq:bridgeminpro}. 
\end{proof}

By \cref{col: H1}, we have performed our first step that reduce the entropy of the smallest part of $\boldf^{n+1}$ (from $f_1^{n+1}$ to $f_{I_1}^{n+1}$) below the entropy of the exact solution in the same section. If the smallest component beyond this section $f_{I_1+1}^{n+1}$ already has the magnitude $O(1)$, for instance, $f_{I_1+1}^{n+1} \geq \frac{1}{2}$, then the remaining part can be processed using the same technique as in \cref{thmlog} and \cref{thm2}. Therefore, below we will only focus on the case where $f_{I_1+1}^{n+1} < 1/2$, and this inspires us to further decompose the remaining components into two parts by introducing $I_2$ such that
\begin{equation} \label{eq:I2}
	f_{I_2}^{n+1} \leq \frac{1}{2}, \qquad f_{I_2+1}^{n+1} > \frac{1}{2}.
\end{equation}
Then we will have $\eta(\boldf) - V = H_1(\boldf) + H_2(\boldf) + H_3(\boldf)$ for any $\boldf \in \mathbb{R}_+^N$, where
\begin{equation} \label{eq:decomH}
	H_1(\boldf) = \sum_{i=1}^{I_1} h(f_i) \Delta v_i,\quad H_2(\boldf) = \sum_{i=I_1+1}^{I_2} h(f_i) \Delta v_i, \quad H_3(\boldf) = \sum_{i=I_2+1}^{N} h(f_i) \Delta v_i.
\end{equation}
Note that this decomposition also includes the case $f_{I_1+1}^{n+1} \geq \frac{1}{2}$, for which we can choose $I_2 = I_1$, so that $H_2(\boldf^{n+1}) = 0$.

\cref{lemma:h2h1} will show some properties of above decomposition. Before that, a quotient $F(x,y,C)$, which will be used in the proof of \cref{lemma:h2h1}, is introduced as
\begin{equation} \label{eq:quotientf}
    F(x,y,C) = \frac{h(x + y) - h(x + C y)}{h(x) - h(x + y)}, 
\end{equation}
where $0 \leq x \leq 1/2$, $C > 1$ and $0 \leq y \leq 1/(2C)$. It is easy to find $F(x,y,C) \geq 0$ in its domain of definition. Furthermore, the following lemma gives the positive lower bound of $F(x,y,C)$ for fixed $C$, where the proof utilizes the (partial) derivatives of $F(x,y,C)$ and its detail is left in \cref{appendix:flowbound}.

\begin{lemma} \label{lemma:twoquotients}
    For any $C_1 \in (0,1]$, there exists $C_2>1$ depending on $C_1$, such that $F(x,y,C_2)$ given in \cref{eq:quotientf} satisfies
    \begin{displaymath}
        F(x,y,C_2) \geq \frac{1}{C_1}, \qquad \forall 0 \leq x \leq 1/2,\  0 \leq y \leq \frac{1}{2C_2}.
    \end{displaymath}
\end{lemma}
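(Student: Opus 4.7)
The plan is to treat $F(\cdot,\cdot,C)$ as a continuous function on the compact rectangle $R_C := [0,1/2] \times [0,1/(2C)]$ and show that $m(C) := \min_{R_C} F(\cdot,\cdot,C)$ tends to $+\infty$ as $C \to \infty$. Given any $C_1 \in (0,1]$, one may then choose $C_2$ large enough that $m(C_2) \geq 1/C_1$, which is exactly the content of the lemma.

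First I would verify that $F$ extends continuously to the edge $y=0$, so the minimum on $R_C$ is actually attained. Writing $N(x,y,C) := h(x+y) - h(x+Cy)$ and $D(x,y) := h(x) - h(x+y)$, a Taylor expansion of $h$ about $x>0$ (using $h'(s) = \log s$) gives $N \sim -(C-1)y\log x$ and $D \sim -y\log x$, whence $\lim_{y\to 0^+} F(x,y,C) = C-1$. At the edge $x=0$ a direct computation based on $h(ay) = ay(\log(ay)-1)$ yields the closed form $F(0,y,C) = (C-1) - \frac{C\log C}{1-\log y}$, which also tends to $C-1$ as $y\to 0^+$. Hence $F$ extends continuously to all of $R_C$.

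Next I would record two easy monotonicity facts. Since only $N$ depends on $C$ and $\partial_C N = -y\log(x+Cy) \geq 0$ (because $x+Cy \leq 1/2+1/2 = 1$), $F$ is non-decreasing in $C$ at each fixed $(x,y)$. Combined with the inclusion $R_{C'} \subseteq R_C$ whenever $C' \geq C$, this implies that the map $C \mapsto m(C)$ is itself non-decreasing, so it suffices to exhibit any unbounded lower bound.

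The core step is a quantitative lower bound. Rescaling via $y = s/(2C)$ with $s\in[0,1]$ turns the domain into the fixed square $[0,1/2]\times[0,1]$, and then $F(x, s/(2C), C)$ has numerator converging pointwise to the positive constant $h(x)-h(x+s/2)$, while its denominator equals $(s/(2C))(1-\log s + \log(2C))$ at $x=0$ and $\sim (s/(2C))(-\log x)$ for $x>0$ — in either case $O(\log C / C)$. This gives pointwise divergence $F\to\infty$ on the open set $s>0$, together with $F=C-1\to\infty$ on $s=0$. To promote this to a uniform lower bound on the compact square, I would use $\partial_x F$ and $\partial_y F$ to show that any interior critical point $(x^\ast,y^\ast)$ of $F(\cdot,\cdot,C)$ satisfies $F(x^\ast,y^\ast,C) \geq c(C)$ for some $c(C)\to\infty$, and examine the four boundary pieces of $R_C$ individually; the edge $x=0$ is handled by the explicit formula above, and the edges $y=0$, $y=1/(2C)$, $x=1/2$ each admit a similar direct estimate giving a bound of order $C/\log C$ at worst.

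The main obstacle is precisely the corner $(x,y) = (0,0)$, where both $N$ and $D$ vanish and $h' = \log$ is unbounded. Naive applications of the mean value theorem lose a logarithmic factor here and would only give $F \geq \text{const}$, which is not enough. Retaining the exact dependence of $D$ on $1-\log y$ (as in the closed form at $x=0$) is what reveals that $F$ stays comparable to $C-1$ near the corner instead of collapsing, and is what ultimately forces the growth rate of $m(C)$ to be sub-linear in $C$. Once this corner is controlled, inverting the resulting explicit bound to choose $C_2$ in terms of $C_1$ completes the argument.
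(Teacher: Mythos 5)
Your overall strategy --- extend $F(\cdot,\cdot,C)$ continuously to the compact rectangle $[0,1/2]\times[0,1/(2C)]$ and show that its minimum $m(C)$ diverges as $C\to\infty$ --- is sound in principle, and several of your preliminary computations are correct and match the paper's corner evaluations: the limit $F\to C-1$ as $y\to 0^+$, the closed form $F(0,y,C)=(C-1)-\frac{C\log C}{1-\log y}$ (which at $y=1/(2C)$ reproduces the paper's $G(0,C)=\frac{C(1+\log 2)}{1+\log(2C)}-1$), and the monotonicity of $F$ in $C$. Your diagnosis that the corner $(0,0)$ is the delicate point is also apt.

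However, the core quantitative step is asserted rather than proved, and that is precisely where all the work lies. You write that you ``would use $\partial_x F$ and $\partial_y F$ to show that any interior critical point $(x^*,y^*)$ satisfies $F(x^*,y^*,C)\geq c(C)$'' and that the remaining edges ``admit a similar direct estimate,'' but you give no indication of how the critical-point equations for this $F$ (a ratio of differences of $h$) would be solved or bounded, and characterizing those critical points is not easier than the original problem. The paper avoids interior analysis entirely by proving two global structural facts: $\partial_y F\leq 0$ on the whole domain (reducing the minimum to the edge $y=1/(2C)$), and unimodality of $x\mapsto F(x,1/(2C),C)$ on $[0,1/2]$ (reducing it further to the two corners $x=0$ and $x=1/2$, which are then evaluated explicitly). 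Each of these requires differentiating twice more, tracking signs via the convexity of $h$, and occupies most of Appendix A. Until you either reproduce such monotonicity results or actually carry out uniform two-sided integral estimates on $N=\int_{x+y}^{x+Cy}(-\log s)\,\mathrm{d}s$ and $D=\int_{x}^{x+y}(-\log s)\,\mathrm{d}s$ covering the regimes $y\gtrless x$ near the corner (which can be made to work, but is not in your write-up), the proposal is a plan with its central estimate missing rather than a proof.
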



\begin{lemma} \label{lemma:h2h1}
	Under the condition of \cref{col: H1} and the decomposition of \cref{eq:decomH}, the following properties are satisfied:
	\begin{enumerate}
		\item $H_2(\boldf^{n+1,1}) - H_2(\boldf(t_{n+1})) \leq \frac{1}{C_1} (H_1(\boldf^{n+1}) - H_1(\boldf^{n+1,1}))$;
		\item There exists a constant $M_1>1$ depending on $C_1$ such that when $\varepsilon \leq \frac{\sqrt{C_1 V} C_f}{2M_1}$,
		the vector
		\begin{equation} \label{eq:beta2}
			\boldf^{n+1,2} = \boldf^{n+1,1} + M_1 \beta_1 ( \boldone - \boldf^{n+1,1})
		\end{equation}
		satisfies $H_1(\boldf^{n+1,1}) - H_1(\boldf^{n+1,2}) \geq \frac{1}{C_1}  (H_1(\boldf^{n+1}) - H_1(\boldf^{n+1,1}))$.
	\end{enumerate}
\end{lemma}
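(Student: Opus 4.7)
The plan is to prove the two items separately: Part~2 follows from a componentwise application of \cref{lemma:twoquotients}, while Part~1 requires a direct mean-value comparison that exploits how small $f_i^{n+1}$ is forced to be on the first block.

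For Part~2, I would fix $i \leq I_1$, set $x = f_i^{n+1}$ and $y = \beta_1(1-x)$ so that $f_i^{n+1,1} = x + y$, and expand \cref{eq:beta2} to obtain
\begin{displaymath}
    f_i^{n+1,2} = x + y\bigl(1 + M_1(1-\beta_1)\bigr).
\end{displaymath}
Let $C_2 > 1$ be the constant produced by \cref{lemma:twoquotients} for the prescribed $C_1$, and choose $M_1 := 2C_2$. The size hypothesis then forces $\beta_1 \leq 1/M_1 = 1/(2C_2)$, which gives $y \leq \beta_1 \leq 1/(2C_2)$ and $1 + M_1(1-\beta_1) \geq C_2$. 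Since $F(x,y,\cdot)$ is monotonically increasing in its third argument and $x \leq f_{I_1}^{n+1} \leq 1/2$ by \cref{eq:I2}, the hypotheses of \cref{lemma:twoquotients} hold and the lemma delivers $h(f_i^{n+1,1}) - h(f_i^{n+1,2}) \geq (1/C_1)\bigl(h(f_i^{n+1}) - h(f_i^{n+1,1})\bigr)$. Multiplying by $\Delta v_i$ and summing over $i=1,\ldots,I_1$ yields the inequality of Part~2.

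For Part~1 I would estimate both sides by the mean value theorem. The right-hand side is lower bounded by the observation that for $i \leq I_1$, $h(f_i^{n+1}) - h(f_i^{n+1,1}) = |\log\xi_i|\,\beta_1(1-f_i^{n+1})$ with $\xi_i\in[f_i^{n+1}, f_i^{n+1,1}]$, whence $|\log\xi_i| \geq |\log f_{I_1}^{n+1,1}|$; summing using $1-f_i^{n+1} \geq 1/2$ and $\sum_{i\leq I_1}\Delta v_i \geq C_1 V$ gives a lower bound of order $\beta_1 C_1 V |\log f_{I_1}^{n+1,1}|$. For the left-hand side I would split
\begin{displaymath}
    H_2(\boldf^{n+1,1}) - H_2(\boldf(t_{n+1})) = \bigl[H_2(\boldf^{n+1,1}) - H_2(\boldf^{n+1})\bigr] + \bigl[H_2(\boldf^{n+1}) - H_2(\boldf(t_{n+1}))\bigr];
\end{displaymath}
the first bracket is nonpositive because for $i\in(I_1,I_2]$ both $f_i^{n+1}$ and $f_i^{n+1,1}$ lie in $[0,1]$ (when $\beta_1 \leq 1$) and $h$ decreases there, while the second bracket is bounded by $K|\log f_{I_1}^{n+1}|\sqrt{V}\,\varepsilon$ via mean value followed by Cauchy--Schwarz, the small set of indices with $f_i(t_{n+1}) \ll f_{I_1}^{n+1}$ contributing only a lower-order term controlled by $\|\boldf^{n+1}-\boldf(t_{n+1})\|_2 \leq \varepsilon$. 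Substituting $\beta_1 = 2\varepsilon/(\sqrt{C_1V}C_f)$ reduces the comparison of the two bounds to showing that $|\log f_{I_1}^{n+1,1}|/|\log f_{I_1}^{n+1}|$ is bounded below by a constant.

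The main obstacle is precisely this last ratio bound: the shifted value $f_{I_1}^{n+1,1} = f_{I_1}^{n+1}+\beta_1(1-f_{I_1}^{n+1})$ can be orders of magnitude larger than $f_{I_1}^{n+1}$ when the latter is tiny, so the logarithmic ratio is not automatic. The smallness hypothesis $\varepsilon\leq\sqrt{C_1V}C_f/(2M_1)$ is what supplies a uniform lower bound on this ratio and closes the argument for Part~1.
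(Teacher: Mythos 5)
Your Part~2 is correct and is essentially the paper's argument: choose $M_1=2C_2$ with $C_2$ from \cref{lemma:twoquotients}, verify $f_i^{n+1,2}\geq f_i^{n+1}+C_2\,\beta_1(1-f_i^{n+1})$ and $f_i^{n+1,1}=f_i^{n+1}+\beta_1(1-f_i^{n+1})$, and conclude componentwise by monotonicity of $h$ on $[0,1]$.

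Part~1, however, has a genuine gap, and it is exactly at the point you flag as ``the main obstacle.'' You reduce the claim to a uniform lower bound on $|\log f_{I_1}^{n+1,1}|/|\log f_{I_1}^{n+1}|$ and assert that the hypothesis $\varepsilon\leq\sqrt{C_1V}C_f/(2M_1)$ supplies it. It does not: that hypothesis constrains $\beta_1$ only, not $f_{I_1}^{n+1}$, and the theorem's assumptions permit $f_{I_1}^{n+1}$ to be arbitrarily small or even zero (condition \cref{eq:maxlogratio} is vacuously satisfied at $f_{I_1}^{n+1}=0$; see \cref{thmex:2}). For fixed $\beta_1>0$ one has $f_{I_1}^{n+1,1}\geq\beta_1/2$, so $|\log f_{I_1}^{n+1,1}|\leq|\log(\beta_1/2)|$ stays bounded while $|\log f_{I_1}^{n+1}|\to\infty$ as $f_{I_1}^{n+1}\to 0$; the ratio tends to $0$ and your upper bound on $H_2(\boldf^{n+1})-H_2(\boldf(t_{n+1}))$ becomes vacuous. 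The deeper issue is that separate mean-value estimates of the two sides are intrinsically too lossy here: when components in the $H_2$ block are tiny, the positive contribution to $H_2(\boldf^{n+1})-H_2(\boldf(t_{n+1}))$ genuinely carries a factor like $|\log f_i^{n+1}|$, and this can only be compensated by the fact that the $H_1$ block components are (by sorting) at least as small, so that the same $L^2$ budget buys at least as much entropy decrease there. That coupling is what the paper's proof exploits: it bounds $H_2(\boldf^{n+1})-H_2(\boldf(t_{n+1}))$ by the optimal value of the constrained minimization of $H_2$ over the $\varepsilon$-ball, transfers each decrement $h(f_i^{n+1})-h(f_i^{n+1}+g_i^{**})$ to the $H_1$ block via convexity of $h$ and the ordering $f_i^{n+1}\geq f_j^{n+1}$ ($i>I_1\geq j$), tiles the $H_2$ block by at most $V/S_{I_1}\leq 1/C_1$ copies of the $H_1$ block, and invokes \cref{lm: opt}, \cref{lm: optcont}\ and \cref{eq:bridgeminpro}\ to dominate each copy by $H_1(\boldf^{n+1})-H_1(\boldf^{n+1,1})$. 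To repair your proof you would need to replace the two independent logarithmic bounds by such a comparison argument.
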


\begin{proof}
	To prove the first statement, we use the convexity of $H_2(\cdot)$ to obtain
	\begin{equation} \label{eq:hmono}
		H_2(\boldf^{n+1,1}) = H_2(\boldf^{n+1}  + \beta_1 (\boldone - \boldf^{n+1})) \leq \max(H_2(\boldf^{n+1}), H_2(\boldone)) = H_2(\boldf^{n+1}).
	\end{equation}
	Therefore,
	\begin{equation}
	\label{eq:H2}
		\begin{aligned}
			& H_2(\boldf^{n+1,1}) - H_2(\boldf(t_{n+1})) \\
			={} & H_2(\boldf^{n+1,1}) - H_2(\boldf^{n+1}) + H_2(\boldf^{n+1}) - H_2(\boldf(t_{n+1})) \\
			\leq{} & H_2(\boldf^{n+1}) - H_2(\boldf(t_{n+1})) 
			\leq H_2(\boldf^{n+1}) - H_2(\boldf^{n+1} + \boldg^{**}),
		\end{aligned}
	\end{equation}
	where $\boldg^{**}=(g^{**}_1, \ldots, g^{**}_N)^T \in \mathbb{R}^N$ is the solution of following minimization problem:
	\begin{displaymath}
		\argmin_{\| \boldg \|_2 \leq \varepsilon} H_2(\boldf^{n+1} + \boldg).
	\end{displaymath}
	The existence of $\boldg^{**}$ is because $H_2(\boldf^{n+1} + \boldg)$ is a continuous function (w.r.t. $\boldg$) defined on a closed set and the constrain $\| \boldg \|_2 \leq \varepsilon$ also gives a closed set for $\boldg$. 
	The solution $\boldg^{**}$ satisfies that $g^{**}_i \geq 0$ for all $I_1 < i \leq I_2$, since replacing any negative component of $\boldg$ by zero will lead to a smaller value for the objective function.
	
	For any $i = I_1+1, \ldots, I_2$ and $j = 1,\ldots,I_1$, the convexity of $h(\cdot)$ implies
	\begin{equation} \label{eq:hitohj}
	h(f_i^{n+1}) - h(f_i^{n+1} + g^{**}_i) \leq h(f^{n+1}_j) - h(f^{n+1}_j + g^{**}_i).
	\end{equation}
	To extend the above inequality to functions defined on $\mathbb{R}_+$ with support in $[0, V]$, which is convenient for our proof in the following step, we would like to follow the notation in \cref{eq: partialSum} and
    represent $\boldf^{n+1}$ and $\boldg^{**}$ by piesewise constant functions $f^{n+1}(v)$ and $g^{**}(v)$ respectively as
	\begin{displaymath}
	   f^{n+1}(v) = f^{n+1}_i, \qquad g^{**}(v) =  g^{**}_i, \qquad v \in (S_{i-1}, S_i],\  i = 1, \dots, I_2,
	\end{displaymath}
	and both $f^{n+1}(v)$ and $g^{**}(v)$ equal zero if $v > S_{I_2}$. 
	Using the functions $f^{n+1}(v)$ and $g^{**}(v)$, the inequality \cref{eq:hitohj} is equivalent to: for any $w \in (S_{I_1}, S_{I_2})$ and $v \in (S_0, S_{I_1})$,
	\begin{displaymath}
	   h(f^{n+1}(w)) - h(f^{n+1}(w) + g^{**}(w)) \leq h(f^{n+1}(v)) - h(f^{n+1}(v) + g^{**}(w)).
	\end{displaymath}
	Since $g^{**}(w)=0$ for $w \geq S_{I_2}$, the above inequality actually holds for any $w \in (S_{I_1}, + \infty)$. Therefore, we choose $w = v + k S_{I_1}$ with $k \geq 1$ to obtain
	\begin{equation}
	\label{eq:H2diff}
		\begin{aligned}
			& H_2(\boldf^{n+1}) - H_2(\boldf^{n+1} + \boldg^{**}) \\
			={} & \sum_{i=I_1+1}^{I_2} \left( h(f^{n+1}_i) - h(f^{n+1}_i + g^{**}_i) \right) \Delta v_i \\
			={} & \int_{S_{I_1}}^{S_{I_2}} \left( h(f^{n+1}(v)) - h(f^{n+1}(v) + g^{**}(v)) \right) \mathrm{d}v \\
			= {} & \sum_{k=1}^{\left\lceil \frac{S_{I_2}-S_{I_1}}{S_{I_1}} \right\rceil} \int_{0}^{S_{I_1}} \left( h(f^{n+1}(v+k S_{I_1})) - h(f^{n+1}(v+k S_{I_1}) + g^{**}(v + k S_{I_1})) \right) \mathrm{d}v \\
			\leq{} & \sum_{k=1}^{\left\lceil \frac{S_{I_2}-S_{I_1}}{S_{I_1}} \right\rceil} \int_{0}^{S_{I_1}} \left( h(f^{n+1}(v)) - h(f^{n+1}(v) + g^{**}(v + k S_{I_1})) \right) \mathrm{d}v.
		\end{aligned}
	\end{equation}
	Since $\| g^{**}\|_2^2 \leq \| \boldg^{**} \|_2^2 \leq \varepsilon^2$, for any $1 \leq k \leq \lceil \frac{S_{I_2}-S_{I_1}}{S_{I_1}} \rceil$, we have
	\begin{equation}
	\label{eq:hineq}
		\begin{aligned}
        	& \int_{0}^{S_{I_1}} h(f^{n+1}(v) + g^{**}(v + k S_{I_1})) \mathrm{d}v \\
        	\geq {} & \int_{0}^{S_{I_1}} h(f^{n+1}(v) + g^{*}(v)) \mathrm{d}v  \\
        	= {} & \sum_{j=1}^{I_1} h(f^{n+1}_j + g_j^*) \Delta v_j \geq
        	\sum_{j=1}^{I_1} h(f^{n+1,1}_j) \Delta v_j,
    	\end{aligned}
	\end{equation}
	where $g^*(v)$ and $g_i^*$ stand for the solutions of the optimization problem \cref{eq: contOptproblem} and \cref{eq:opt}, respectively; the equality is the conclusion of \cref{lm: optcont}, and the last ``$\geq$'' comes from the inequality \cref{eq:bridgeminpro}. Inserting \cref{eq:hineq} into \cref{eq:H2diff} yields
	\begin{equation}
	\label{eq:H2est}
	    \begin{aligned}
	        H_2(\boldf^{n+1}) - H_2(\boldf^{n+1} + \boldg^{**}) &\leq \sum_{k=1}^{\lceil \frac{S_{I_2} - S_{I_1}}{S_{I_1}} \rceil} \sum_{j=1}^{I_1} \left( h(f^{n+1}_j) - h(f^{n+1,1}_j) \right) \Delta v_j \\
	        & \leq \frac{V}{S_{I_1}} (H_1(\boldf^{n+1}) - H_1(\boldf^{n+1,1})).
	    \end{aligned}
	\end{equation}
	Since the definition of $I_1$ implies $S_{I_1} \geq C_1 V$, concatenating \cref{eq:H2} and \eqref{eq:H2est} proves the first statement.
	
	The second statement will be proved componentwisely. We set $M_1 = 2C_2$, where $C_2$ is determined by \cref{lemma:twoquotients} with $C_1$ being chosen as the constant $C_1$ appearing in the first statement.
	Then, for any $1 \leq i \leq I_1$, it holds that
	\begin{displaymath}
		f_i^{n+1,1} = f_i^{n+1} + \beta_1 (1 - f^{n+1}_i) \leq f_i^{n+1} + \beta_1.
	\end{displaymath}
	Moreover, when $\varepsilon \leq \frac{\sqrt{C_1 V} C_f}{2M_1}$, it could be found that $\beta_1 \leq 1 / M_1$ and
	\begin{displaymath}
	\begin{aligned}
		f_i^{n+1,2} &= f_i^{n+1} + (\beta_1 + M_1 \beta_1 - M_1 \beta_1^2) (1 - f^{n+1}_i) \\
		& \geq f_i^{n+1} + M_1 \beta_1(1 - f^{n+1}_i) \geq f_i^{n+1} + C_2 \beta_1,
	\end{aligned}
	\end{displaymath}
	where we have used $f_i^{n+1} \leq \frac{1}{2}$ and $M_1 = 2 C_2$. Therefore, the monotonicity of $h(\cdot)$ in the interval of $[0,1]$ implies
	\begin{displaymath}
		\frac{h(f_i^{n+1,1}) - h(f_i^{n+1,2})}{h(f_i^{n+1}) - h(f_i^{n+1,1})} \geq \frac{h(f_i^{n+1} + \beta_1) - h(f_i^{n+1} + C_2 \beta_1)}{h(f_i^{n+1}) - h(f_i^{n+1}+ \beta_1)} = F(f_i^{n+1}, \beta_1, C_2) \geq \frac{1}{C_1},
	\end{displaymath}
	where the function $F(\cdot,\cdot,\cdot)$ is defined in \cref{eq:quotientf} and the last inequality is due to \cref{lemma:twoquotients}.
	By noticing $h(f_i^{n+1}) - h(f_i^{n+1,1}) \geq 0$, the second statement can then be easily derived.
\end{proof}

With the preparation of \cref{lemma:h2h1}, we can start to prove \cref{thm3}.

\begin{proof}[Proof of \cref{thm3}]
	
	If $f_{I_1}^{n+1} \geq \frac{1}{2}$, \cref{eq:maxlogratio} implies $\log(f_1^{n+1}) \geq -\frac{1}{C_f} \log(2)$, which means $f_1^{n+1} \geq 2^{-1/C_f}$. Then, from \cref{thm1}, we get $\| \beta  ( \boldone - \boldf^{n+1})\|_2 \leq M \| \boldf^{n+1} - \boldf(t_{n+1}) \|_{2}$ where $M>0$ depends on $2^{-1/C_f}$, $\| \boldf^{n+1} \|_{\infty}$ and $\| \boldf(t_{n+1})\|_{\infty}$. This completes the proof.
	
	Otherwise, if $f_{I_1}^{n+1} < \frac{1}{2}$, we would like to introduce $I_2$ and decompose $\eta(\boldf)$ following \cref{eq:I2} and \cref{eq:decomH}. After that, we construct $\boldf^{n+1,1}$ and $\boldf^{n+1,2}$ from \cref{eq:thm2f1} with $\beta_1 = \| \boldf(t_{n+1}) - \boldf^{n+1} \|_{2} / (\sqrt{C_1 V} C_f)$ and \cref{eq:beta2} with $\beta_2 = M_1 \beta_1$, respectively, where the $M_1$ is the constant in \cref{lemma:h2h1}.
	Then we set $\delta = \sqrt{C_1 V}C_f/2$, and if $\| \boldf^{n+1} - \boldf(t_{n+1}) \| < \delta$, it holds that
	\begin{displaymath}
 	    \begin{aligned}
 			& H_1(\boldf^{n+1,2}) + H_2(\boldf^{n+1,2}) - H_1(\boldf(t_{n+1})) - H_2(\boldf(t_{n+1})) \\
 			={} & \left( H_1(\boldf^{n+1,2}) - H_1(\boldf^{n+1,1}) \right) + \left( H_1(\boldf^{n+1,1}) -  H_1(\boldf(t_{n+1})) \right) \\
 			& + \left( H_2(\boldf^{n+1,2}) - H_2(\boldf^{n+1,1}) \right) + \left( H_2(\boldf^{n+1,1}) -  H_2(\boldf(t_{n+1})) \right) \\
 			\leq{} & \left( H_1(\boldf^{n+1,2}) - H_1(\boldf^{n+1,1}) \right) + 0 & \text{(\cref{col: H1})} \\
 		    & + \left( - \frac{H_1(\boldf^{n+1}) - H_1(\boldf^{n+1,1})}{C_1} \right) + \left(  \frac{H_1(\boldf^{n+1}) - H_1(\boldf^{n+1,1})}{C_1} \right) & \text{(\cref{lemma:h2h1})} \\
 			={} &  H_1(\boldf^{n+1,2}) - H_1(\boldf^{n+1,1}) \leq 0,
    	\end{aligned}
 	\end{displaymath}
    where the last inequality is similar to \cref{eq:hmono} which utilizes the convexity of $H_1(\cdot)$.
	
    Therefore, by the decomposition in \cref{eq:decomH},
	\begin{equation}
	\label{eq:Hdiff}
		\begin{aligned}
			\eta(\boldf^{n+1,2}) - \eta(\boldf(t_{n+1})) & \leq H_3(\boldf^{n+1,2}) - 	H_3(\boldf(t_{n+1})) \\
			& = \sum_{f_i^{n+1}> \frac{1}{2} } \left( h(f_i^{n+1,2}) - h(f_i(t_{n+1}))\right) \Delta v_i.
		\end{aligned}
	\end{equation}
	From the construction of $\boldf^{n+1,2}$, we know $f_i^{n+1,2}$ is a convex combination of $1$ and $f_i^{n+1}$, so $f_i^{n+1}> \frac{1}{2}$ implies $f_i^{n+1,2}> \frac{1}{2}$. Therefore \cref{eq:Hdiff} can be further extended as
	\begin{equation}
	\label{eq:Hdiff1}
		\eta(\boldf^{n+1,2}) - \eta(\boldf(t_{n+1})) \leq \sum_{f_i^{n+1,2}> \frac{1}{2} } \left( h(f_i^{n+1,2}) - h(f_i(t_{n+1}))\right) \Delta v_i.
	\end{equation}
	
	The remaining part of the proof is similar to the proof of \cref{thm2}. If $\eta(\boldf^{n+1,2}) \leq \eta(\boldf(t_{n+1}))$, the proof is done. Otherwise, we have $\eta(\boldf^{n+1,2}) > \eta(\boldf(t_{n+1}))$, and we can continue to find $\hat{\boldf}^{n+1}$ and $\beta_3$ such that
	\begin{displaymath}
		\hat{\boldf}^{n+1} = \boldf^{n+1,2} + \beta_3 ( \boldone - \boldf^{n+1,2}),
	\end{displaymath}
	and $\eta(\hat{\boldf}^{n+1}) = \eta(\boldf(t_{n+1}))$. Due to the inequality \cref{eq:Hdiff1}, we can follow the proof of \cref{lm:hdiff} (case (ii)) and \cref{thm1} to show
	\begin{displaymath}
		\| \beta_3  ( \boldone - \boldf^{n+1,2})\|_2 \leq M_2 \| \boldf(t_{n+1}) - \boldf^{n+1,2} \|_2,
	\end{displaymath}
	where $M_2>0$ is a constant depending on $\| \boldf^{n+1} \|_{\infty}$ (because $\| \boldf^{n+1,2} \|_{\infty} \leq \| \boldf^{n+1} \|_{\infty}$) and $\| \boldf(t_{n+1})\|_{\infty}$. Therefore, $\eta(\hat{\boldf}^{n+1}) \leq \eta(\boldf(t_{n+1}))$, and
	\begin{align*}
			& \| \hat{\boldf}^{n+1} - \boldf^{n+1} \|_2 \\
			\leq{} &  \| \hat{\boldf}^{n+1} - \boldf^{n+1,2} \|_2 + \| \boldf^{n+1,2} - \boldf^{n+1} \|_2 \\
			\leq{} & M_2 \| \boldf(t_{n+1}) - \boldf^{n+1,2} \|_{2} + \| \boldf^{n+1,2} - \boldf^{n+1} \|_2 \\
			\leq{} & M_2 \| \boldf(t_{n+1}) - \boldf^{n+1} \|_{2} + (1 + M_2)\| \boldf^{n+1,2} - \boldf^{n+1} \|_2 \\
			={} & M_2 \| \boldf(t_{n+1}) - \boldf^{n+1} \|_{2} + (1 + M_2)(\beta_1 + \beta_2 - \beta_1 \beta_2)\| \boldone - \boldf^{n+1} \|_2 \\
			\leq{} & \left(M_2 + \frac{(1 + M_1) (1 + M_2) \| \boldf^{n+1} \|_{\infty}}{\sqrt{C_1} C_f} \right) \| \boldf(t_{n+1}) - \boldf^{n+1} \|_{2},
	\end{align*}
	where the last inequality utilizes $(\beta_1 + \beta_2 - \beta_1 \beta_2) \leq \beta_1 + \beta_2$ and $\| \boldone - \boldf^{n+1} \|_2 \leq \sqrt{V} \| \boldf^{n+1} \|_{\infty}$. If we denote the constant in front of $\| \boldf(t_{n+1}) - \boldf^{n+1} \|_{2}$ as $M$, we have proved the constructed 
	\begin{align*}
			\hat{\boldf}^{n+1} & = \boldf^{n+1,2} + \beta_3 ( \boldone - \boldf^{n+1,2}) \\
			& = \boldf^{n+1} + (\beta_1 + \beta_2 + \beta_3 - \beta_1 \beta_2 - \beta_2 \beta_3 - \beta_1 \beta_3 + \beta_1 \beta_2 \beta_3 ) (\boldone - \boldf^{n+1}),
	\end{align*}
	such that $\eta(\hat{\boldf}^{n+1}) \leq \eta(\boldf(t_{n+1}))$ and $\| \hat{\boldf}^{n+1} - \boldf^{n+1} \|_2 \leq M \| \boldf(t_{n+1}) - \boldf^{n+1} \|_{2}$. Due to the monotonicity of $H(\boldf^{n+1} + \beta(\boldone- \boldf^{n+1}))$ w.r.t. $\beta$, if we construct $\beta$ from \cref{eq:enfix},
	\begin{displaymath}
		\| \beta  ( \boldone - \boldf^{n+1})\|_2 \leq \| \hat{\boldf}^{n+1} - \boldf^{n+1} \|_2  \leq M \| \boldf^{n+1} - \boldf(t_{n+1}) \|_{2}.
	\end{displaymath}
\end{proof}

\section{Numerical examples}
\label{sec:experiments}
We now present two numerical examples to show the effect of our entropy fix. In order to construct cases where the numerical scheme frequently violates the entropy inequality, we deliberately select highly oscillatory initial data. We would like to remark that such an entropy fix may only need to be applied occasionally in many applications.

\subsection{Linear Fokker-Planck equation}
In this example, we consider the one-dimensional linear Fokker-Planck equation (also known as the drift-diffusion equation):
\begin{equation}\label{eq:fokker}
f_t=f_{xx}+(V'(x)f)_x, \qquad t>0, x \in (0,1),
\end{equation}
with periodic boundary condition $f(t,0) = f(t,1)$ and potential function
\begin{displaymath}
   V(x) = \frac{1}{2 \pi} \cos \left( 20 \pi x \right).
\end{displaymath}
Let $M(x)=\exp(-V(x))$, then \cref{eq:fokker} can be written equivalently as
\begin{equation} \label{FP1}
    f_t=\left(M \left(\frac{f}{M}\right)_x\right)_x.
\end{equation}
If we further define $g(t,x)=f(t,x) / M(x)$, then (\ref{FP1}) becomes
\begin{equation}\label{eq:fokkervarg}
    g_t=\frac{1}{M}\left(M g_x\right)_x, \qquad t>0, x \in (0,1).
\end{equation}

We will focus on the discretization of \cref{eq:fokkervarg}. Initial condition is taken as
\begin{displaymath}
g(0,x) = 1.2 + \sum_{j=1}^{20} \frac{j}{210} \sin \left( 2 j \pi x \right).
\end{displaymath}
Note that $\sum_{j=1}^{20} j=210$, so $0.2 \leq g(0,x) \leq 2.2$. We partition $[0,1]$ into $N = 64$ grids uniformly with mesh size $\Delta x = 1/N$ and take central difference for spatial discretization. Denote $g_j = g(t, j \Delta x)$, $M_j = M(j \Delta x)$ and $M_{j+1/2} = M( (j+1/2)\Delta x)$ for $j= 0, \dots, N-1$, \cref{eq:fokkervarg} can be approximated by
\begin{equation} \label{eq:fokkergspatial}
   \frac{\mathrm{d} g_j}{\mathrm{d} t} = \frac{1}{M_j} \frac{M_{j+1/2}(g_{j+1}-g_j)-M_{j-1/2}(g_j-g_{j-1})}{(\Delta x )^2}.
\end{equation}
The exact solution of \cref{eq:fokkergspatial} can be calculated by evaluating the eigenvalues and eigenvectors of the right-hand side of \cref{eq:fokkergspatial}.

The semi-discrete scheme \cref{eq:fokkergspatial} (time is kept continuous) satisfies the conservation of mass and the monotonicity of entropy with weight $M_j$. In fact, it is easy to verify $\sum_{j=0}^{N-1} M_j g_j$ remains as constant. For the entropy, we have
\begin{equation}
\begin{split}
  \frac{\mathrm{d} \left( \sum_{j=0}^{N-1} M_j g_j\log g_j\right)}{\mathrm{d} t}&=\frac{1}{(\Delta x)^2}\sum_{j=0}^{N-1}\left(M_{j+1/2}(g_{j+1}-g_j)-M_{j-1/2}(g_j-g_{j-1})\right)\log g_j\\
  &=-\frac{1}{(\Delta x)^2}\sum_{j=0}^{N-1} M_{j-1/2}(g_j-g_{j-1})(\log g_j - \log g_{j-1}) \leq 0.
  \end{split}
\end{equation}

We now discretize \cref{eq:fokkergspatial} by the implicit midpoint (i.e., Crank–Nicolson) method. This time discretization still conserves the mass. However, there is no guarantee that the entropy will decay monotonically in time (in fact, it does not). In \cref{fig:fokker}, we report the time evolution of the entropy with and without the entropy fix. Two different time steps $\Delta t = 1/512$ and $\Delta t = 1/1024$ are considered. In both cases, it is clear that the entropy decreases monotonically with the help of the entropy fix. Meanwhile, the $L^2$ error of the solution remains almost the same with and without the entropy fix. It is interesting to note that when $\Delta t = 1/512$, the entropy fix is only needed at the first few time steps. On the other hand, when $\Delta t = 1/1024$, the entropy fix is required only after $t=0.02$.


\begin{figure}[htbp]
	\centering
\subfigure[Entropy vs $t$. $\Delta t = 1 / 512$.]
	{\includegraphics[width=.45\textwidth]{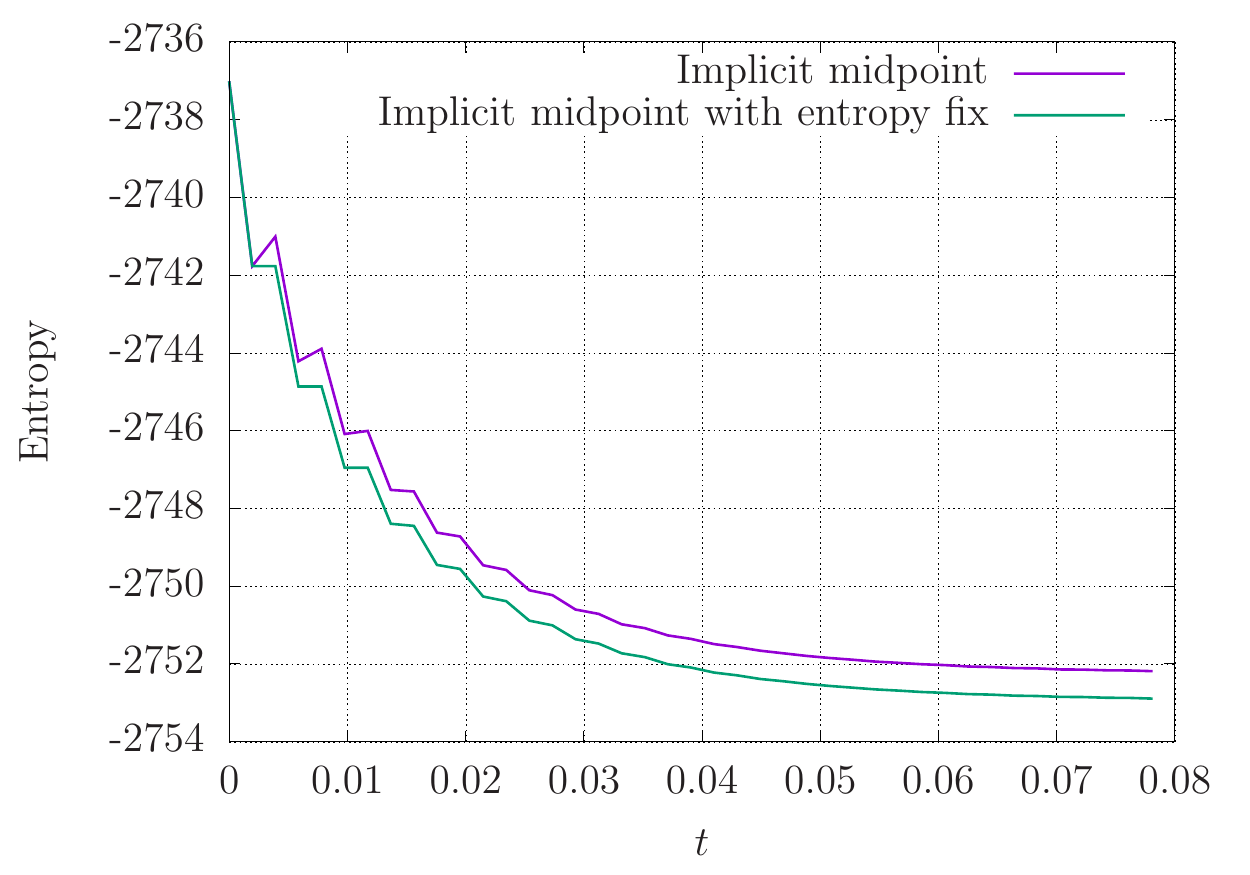}}
\subfigure[$L^2$ relative error vs $t$. $\Delta t = 1 / 512$.]
	{\includegraphics[width=.45\textwidth]{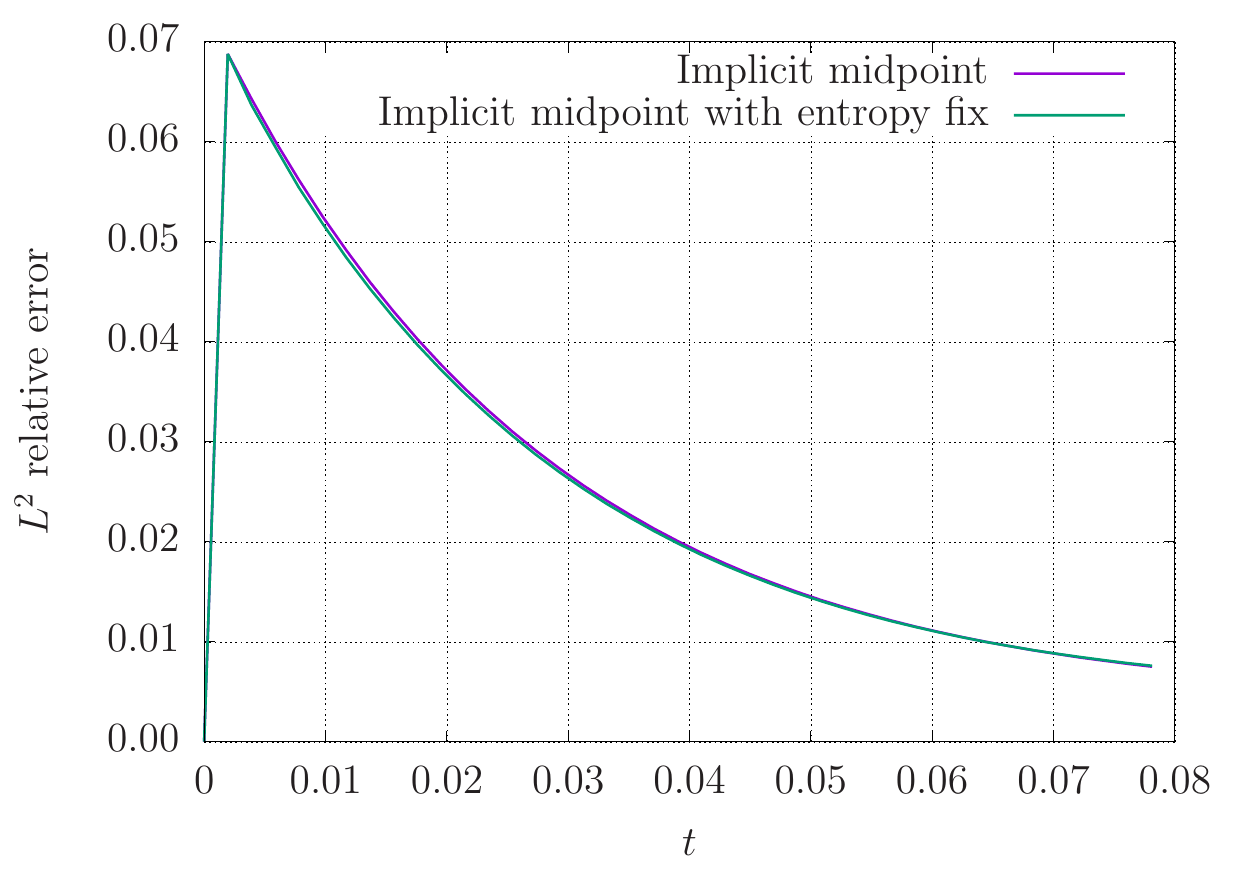}}
\subfigure[Entropy vs $t$. $\Delta t = 1 / 1024$.]
	{\includegraphics[width=.45\textwidth]{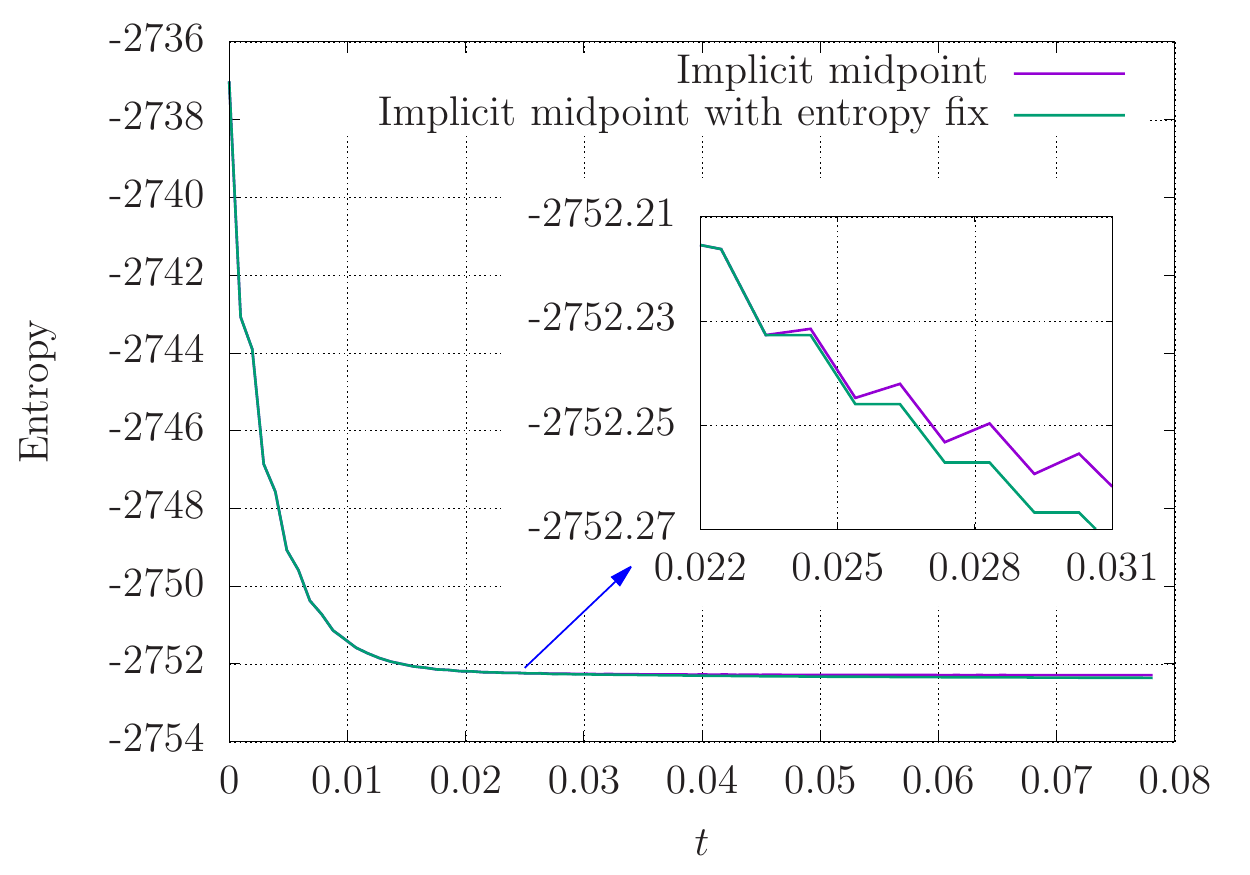}}
\subfigure[$L^2$ relative error vs $t$. $\Delta t = 1 / 1024$.]
	{\includegraphics[width=.45\textwidth]{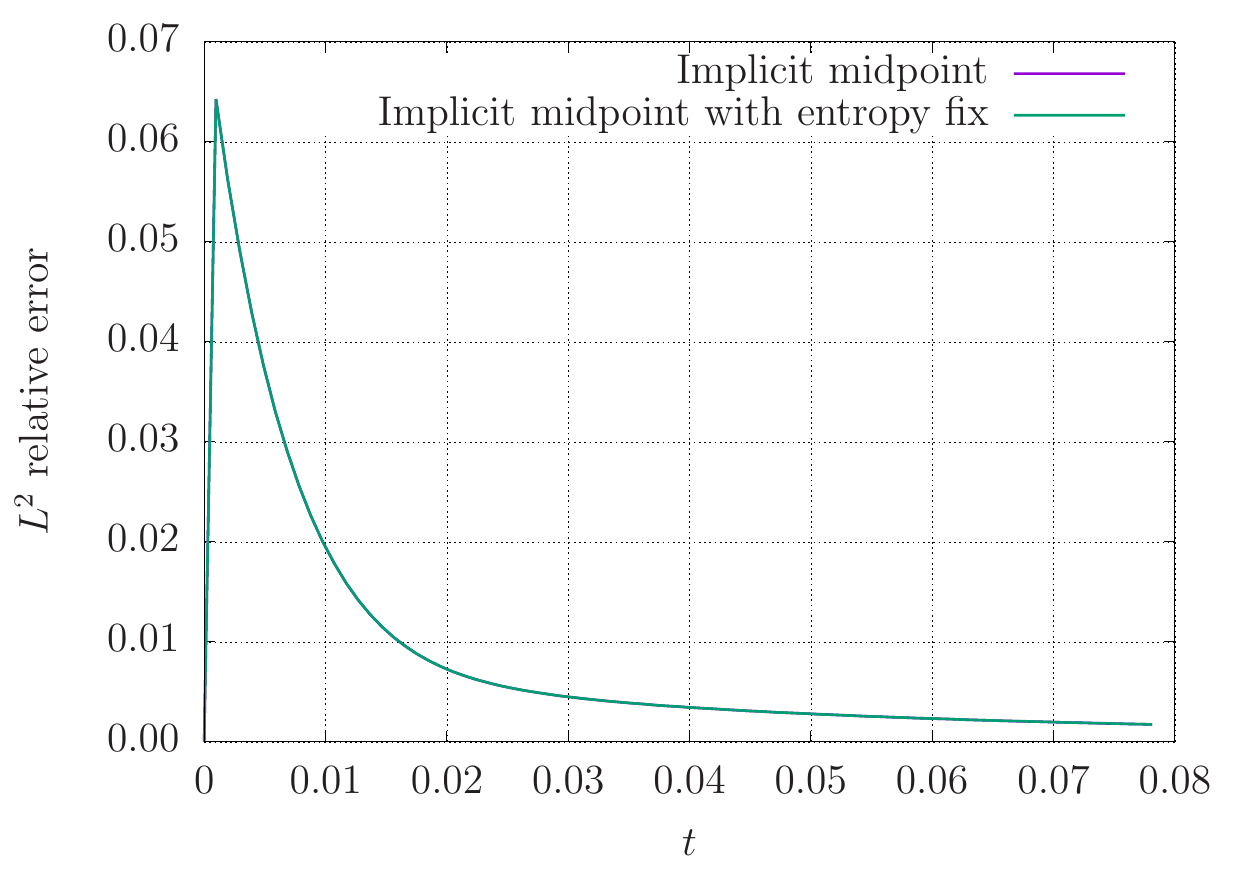}}
	
    \caption{Example of the linear Fokker-Planck equation. Time evolution of the entropy $H(\boldg)  = \sum_{j=0}^{N-1} (g_j \log g_j - g_j)M_j \Delta x $ and the $L^2$ relative error $\| \boldg - \boldg_{\rm exact} \|_2 / \| \boldg_{\rm exact} \|_2 = ( \sum_{j=0}^{N-1} ( g_j - g_{{\rm exact},j} )^2 M_j \Delta x )^{1/2} / ( \sum_{j=0}^{N-1} ( g_{{\rm exact},j} )^2 M_j \Delta x )^{1/2}$, where $\Delta x=1/64$, $\Delta t=1/512$ in the top two figures and $\Delta t=1/1024$ in the bottom two figures.
    \label{fig:fokker}}
\end{figure}

\subsection{Nonlinear Boltzmann equation} \label{example:boltzmann}
In this example, we consider a nonlinear model introduced in \cite{Cai2018entropic}, which results from a Fourier method for the spatially homogeneous Boltzmann equation. The governing equation reads
\begin{equation} \label{eq:Boltzmann}
    \frac{\mathrm{d} f_{r}(t)}{\mathrm{d} t}=\sum_{p, q, s \in \mathcal{X}} A_{p q}^{r s}\left(f_{p}(t) f_{q}(t)-f_{r}(t) f_{s}(t)\right), \quad r \in \mathcal{X},
\end{equation}
where $f_r$ represents the approximation  of the distribution function on a uniform 3D lattice index set $\mathcal{X} = \{(r_1, r_2, r_3) \mid r_i = 0,\dots,M-1 \text{ for } i = 1,2,3\}$.
In \cite{Cai2018entropic}, the coefficients $A_{p q}^{r s}$ are determined in such a way that the semi-discrete scheme (\ref{eq:Boltzmann}) decays the entropy. However, this property may not hold when the time is discretized.


In our experiment, we choose $M = 17$, and the values of $A_{pq}^{rs}$ are given in \cref{appendix:coeff}. The initial condition is taken as 
\begin{equation*}
    f_r(0) = 3.2 + \sum_{j=1}^{10} \frac{j}{55}\left[\sin \Big(j\pi \big(\frac{r_1}{M} - \frac{1}{2}\big)\Big)+\sin \Big(j\pi \big(\frac{r_2}{M} - \frac{1}{2}\big)\Big)+\sin \Big(j\pi \big(\frac{r_3}{M} - \frac{1}{2}\big)\Big)\right].
\end{equation*}
We solve \cref{eq:Boltzmann} by the forward Euler method with time step $\Delta t = 0.0007$. The results are displayed in \cref{fig:bkw3d}, from which we can see that the entropy fix method guarantees the monotonicity of the entropy. The numerical error is computed by comparison with the numerical solution computed with a smaller time step $\Delta t = 0.000175$, with and without the entropy fix. It can be seen that the two error curves almost coincide with each other, meaning that the entropy fix does not ruin the numerical accuracy.
\begin{figure}[htbp]
	\centering
	\subfigure[Entropy vs $t$.]
	{\includegraphics[width=.45\textwidth]{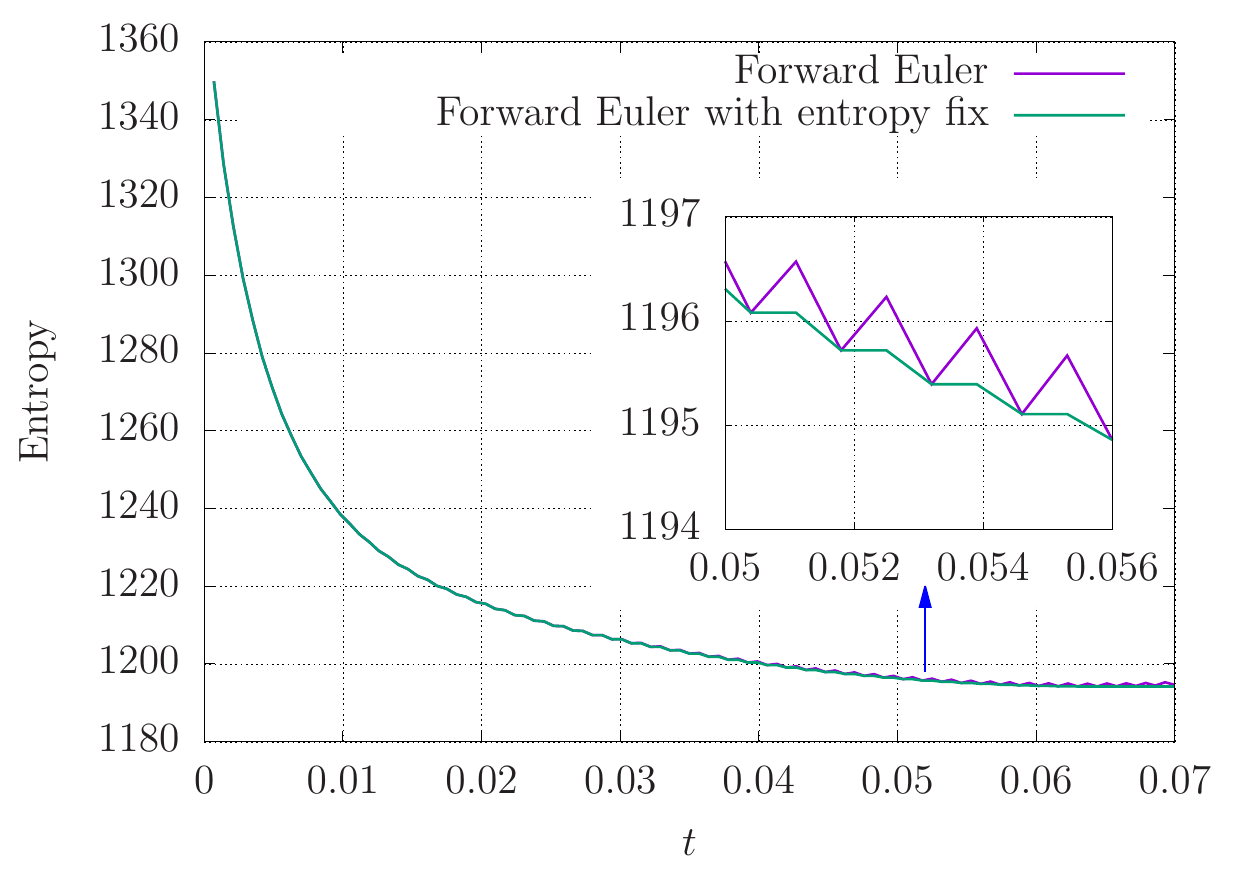}}
\subfigure[$L^2$ relative error vs $t$.]
	{\includegraphics[width=.45\textwidth]{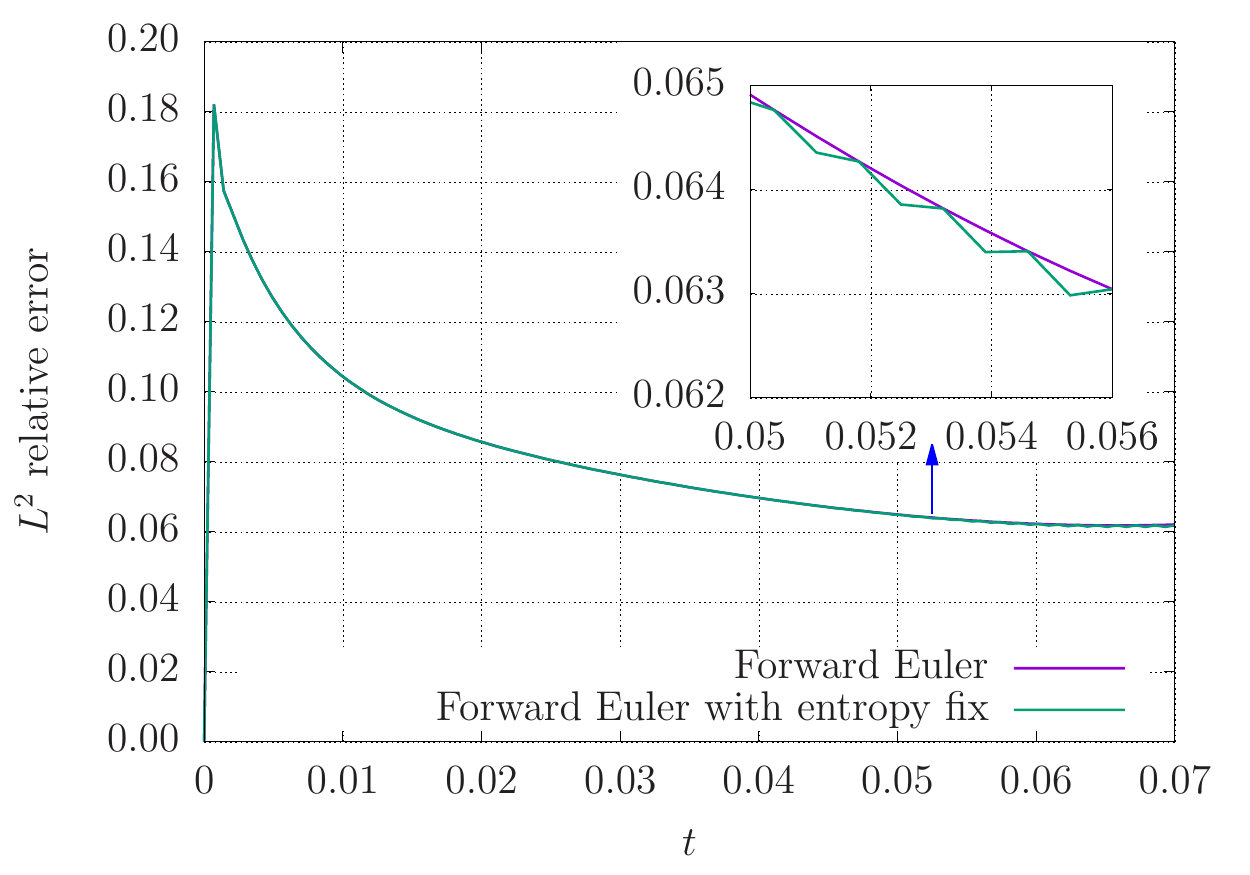}}
	
        \caption{Example of the nonlinear Boltzmann equation. Time evolution of the entropy $H(\boldf)  = \sum_{r \in \mathcal{X} } (f_r \log f_r - f_r) \Delta v $ and the $L^2$ relative error $\| \boldf - \boldf_{\rm exact} \|_2 / \| \boldf_{\rm exact} \|_2 = ( \sum_{r \in \mathcal{X}} ( f_r - f_{{\rm exact},r} )^2 \Delta v )^{1/2} / ( \sum_{r \in \mathcal{X}} ( f_{{\rm exact},r} )^2 \Delta v )^{1/2}$, where $\Delta v = ( 3(3+\sqrt{2}) / 17 )^3$ and $\Delta t=0.0007$. $\boldf_{\rm exact}$ is the numerical solution evaluated with time step $\Delta t = 0.000175$.
        \label{fig:bkw3d}
        }
\end{figure}


\section{Conclusions}
\label{sec:conclusions}

This paper focuses on the entropic method for a conservative and positive system of ordinary differential equations. When the numerical solution at the next time step violates the monotonicity of entropy, our entropic method revises it by a linear interpolation to the constant state. The resulting scheme decays the entropy monotonically, while the order of local truncation error has a slight reduction in general. However, in some special cases, the numerical order is proved to be retained after entropic revision. Numerical experiments validate our results. Future work includes the extension of the entropic method to spatially inhomogeneous kinetic equations such as the Boltzmann equation and the radiative transfer equations.


\appendix
\section{Proof of \cref{lemma:twoquotients}} \label{appendix:flowbound}
    This proof is composed of three steps:
    \begin{enumerate}
        \item $F(x,y,C) \geq F(x, \frac{1}{2C}, C)$ for $0 \leq x \leq \frac{1}{2}$, $C>1$ and $0 \leq y \leq \frac{1}{2C}$;
        \item $F(x, \frac{1}{2C}, C) \geq \min(F(0, \frac{1}{2C}, C), F(\frac{1}{2}, \frac{1}{2C}, C))$ for $0 \leq x \leq \frac{1}{2}$ and $C>1$;
        \item for any $C_1 \in (0,1]$, there is $C_2 > 1$ depending on $C_1$ such that $F(0, \frac{1}{2C_2}, C_2) \geq \frac{1}{C_1}$ and $F(\frac{1}{2}, \frac{1}{2C_2}, C_2) \geq \frac{1}{C_1}$.
    \end{enumerate}
\subsection{First step}
    It is sufficient to show $\frac{\partial F(x, y, C)}{\partial y} \leq 0$ for $y \geq 0$, from which $F(x,y,C) \geq F(x, \frac{1}{2C}, C)$ for $0 \leq y \leq \frac{1}{2C}$. By the expression of $F(x, y, C)$ in \cref{eq:quotientf}, it could be calculated that
	\begin{equation} \label{eq:fbeta1}
		\frac{\partial F(x, y, C)}{\partial y} = \frac{F_1(x, y, C)}{\left(h(x) - h(x+ y)\right)^2},
	\end{equation}
	where
	\begin{align*}
			F_1(x, y, C) & = x \log \left(y+x\right) \left(\log \left(x\right)-\log \left(y C+x\right)\right) \\
			& +  C x \log \left(y C+x\right) \left(\log \left(y+x\right)-\log \left(x\right)\right) \\
			& +y C \left(\log \left(y+x\right)-\log \left(y C+x\right)\right).
	\end{align*}
	Then we take the derivative of $F_1(x, y, C)$ with respect to $y$,
	\begin{equation} \label{eq:f1beta1}
		\frac{\partial F_1(x, y, C)}{\partial y} = \frac{F_2(x, y, C)}{ \left(y+x \right) \left(y C+x \right)},
	\end{equation}
	where
	\begin{align*}
			F_2(x, y, C) & = x^2 \left(C^2 \left(\log \left(y+x\right)-\log \left(x\right)\right)-\log \left(y C+x\right)+\log \left(x\right)\right) \\
			& + y C x \left(-2 \log \left(y C+x\right)-C \left(-2 \log \left(y+x\right)+\log \left(x\right)+1\right) \right)  \\
			& + y C x \left(\log \left(x\right)+1 \right) + y^2 C^2 \left(\log \left(y+x\right)-\log \left(y C+x\right)\right).
	\end{align*}
	We continue to take the derivative of $F_2(x, y, C)$ w.r.t. $y$,
	\begin{align*}
			\frac{\partial F_2(x, y, C)}{\partial y} =  C (C-1) x - C \left(-2 C h(x+y)+2 h\left( x + C y \right)+\left(C-1\right) h(x) \right).
	\end{align*}
	When $C>1$, the convexity of $h(\cdot)$ implies
	\begin{displaymath}
		h(x+y) \leq \left( 1 - \frac{1}{C} \right) h(x) + \frac{1}{C} h(x + C y).
	\end{displaymath}
	Therefore, 
	\begin{displaymath}
		-2 C h(x+y)+2 h\left( x + C y \right)+\left(C-1\right) h(x) \geq (1-C)h(x).
	\end{displaymath}
	As a result,
	\begin{displaymath}
		\frac{\partial F_2(x, y, C)}{\partial y} \leq C(C-1) (h(x) + x) \leq 0,
	\end{displaymath}
	where the last inequality utilizes $h(x) + x =x \log (x) \leq 0$ when $x \leq \frac{1}{2}$.
	
	$\frac{\partial F_2(x, y, C)}{\partial y} \leq 0$ implies $ F_2(x, y, C)$ is decreasing with respect to $y$ for fixed $x$ and $C$. At the same time, it is easy to verify that $F_2(x, 0, C)=0$. Therefore, $F_2(x, y, C) \leq F_2(x, 0, C)=0$ for $y \geq 0$. 
	
	From \cref{eq:f1beta1} and $F_2(x, y, C) \leq 0$, it is easy to get $\frac{\partial F_1(x, y, C)}{\partial y} \leq 0$, which means $F_1(x, y, C)$ is decreasing with respect to $y$ for fixed $x$ and $C$. Combining with $F_1(x, y, C) \mid _{y=0} = 0$, we could find $F_1(x, y, C) \leq 0$ for $y \geq 0$.
	
	Finally, plugging $F_1(x, y, C) \leq 0$ into \cref{eq:fbeta1}, we could conclude that $\frac{\partial F(x, y, C)}{\partial y} \leq 0$ for $y \geq 0$.

\subsection{Second step}
	For simplicity, We would like to introduce $G(x,C)$ to denote $F(x, \frac{1}{2C}, C)$ as
	\begin{equation} \label{eq:ftog}
	    G(x,C) = F(x, \frac{1}{2C}, C) = \frac{h(x + 1/(2C)) - h(x + 1/2)}{h(x) - h(x + 1/(2C))},
	\end{equation}
	where the second equality is achieved by plugging $y = 1/(2C)$ into \cref{eq:quotientf}. We will show that for fixed $C >1$, $G(x,C)$ is increasing and then decreasing for $0 < x \leq \frac{1}{2}$, from which it is easy to see $G(x,C) \geq \min(G(0,C), G(\frac{1}{2}, C))$. The idea is similar to the first step, which utilizes the sign of derivative.

    By the expression of $G(x,C)$ in \cref{eq:ftog}, a direct calculation shows
	\begin{equation} \label{eq:ffi}
		\frac{\partial G(x, C)}{\partial x} = \frac{G_1(x, C)}{2 C \left(h\left(x\right)-h\left(x + \frac{1}{2 C} \right)\right)^2},
	\end{equation}
	where
	\begin{align*}
			G_1(x, C) & = C \left(\log \left(x+\frac{1}{2}\right)-1\right) \left(\log \left(x\right)-\log \left(x + \frac{1}{2 C}\right)\right) \\
			& -\left(\log \left(x\right)-\log \left(x+\frac{1}{2}\right)\right) \left(\log \left(x + \frac{1}{2 C}\right)-1\right).
	\end{align*}
	Again, we taken the derivative of $G_1(x, C)$ w.r.t. $x$,
	\begin{equation} \label{eq: f1fi}
		\frac{\partial G_1(x, C)}{\partial x} = \frac{G_2(x, C)}{x \left(2 x+1\right) \left(2 C x+1\right)},
	\end{equation}
	where
	\begin{align*}
			G_2(x, C) & = 4 C^2 x^2 \left(\log \left(x\right)-\log \left(x + \frac{1}{2 C}\right)\right) -\log \left(x + \frac{1}{2 C}\right) \\
			& + 4 C x \left(\log \left(x+\frac{1}{2}\right)-\log \left(x + \frac{1}{2 C}\right)\right)  +1 - C \\
			&+ C \left( - 4 x^2 \left(\log \left(x\right)-\log \left(x+\frac{1}{2}\right)\right)+\log \left(x+\frac{1}{2}\right) \right).
	\end{align*}
	We continue to take the derivative of $G_2(x, C)$ w.r.t. $x$,
	\begin{displaymath}
			\frac{\partial G_2(x, C)}{\partial x} = 8 C \left(- C h\left(x + \frac{1}{2 C}\right)+ \left(C-1\right) h(x)+ h\left(x+\frac{1}{2}\right)\right).
	\end{displaymath}
	The convexity of $h(\cdot)$ and $C>1$ implies
	\begin{displaymath}
		h\left(x + \frac{1}{2 C}\right) \leq \left( 1 - \frac{1}{C} \right) h(x) + \frac{1}{C}  h\left(x+\frac{1}{2}\right),
	\end{displaymath}
	which means
	\begin{displaymath}
		- C h\left(x + \frac{1}{2 C}\right)+ \left(C-1\right) h(x)+ h\left(x+\frac{1}{2}\right) \geq 0.
	\end{displaymath}
	Therefore, $\frac{\partial G_2(x, C)}{\partial x} \geq 0$ for $0< x \leq \frac{1}{2}$, meaning $G_2(x, C)$ is increasing w.r.t. $x$ for fixed $C$. On the other hand, 
	\begin{align*}
			\lim_{x \to 0} G_2(x, C) & = -\log \left(\frac{1}{C}\right)-C (1+\log (2))+1+\log (2) \\
			& \leq C -1 -C (1+\log (2))+1+\log (2) = - \log(2) (C -1) < 0,
	\end{align*}
	and
	\begin{displaymath}
		G_2(\frac{1}{2}, C) = -C - \left(C+1\right)^2 \log \left(\frac{1}{2 C}+\frac{1}{2}\right)-C \left(C-1\right) \log (2)+1 \geq 0.
	\end{displaymath}
	Therefore, for fixed $C$, there exists $0 < G_2^0 \leq \frac{1}{2}$, such that $G_2(x, C) \leq 0$ for $x \leq G_2^0$ and $G_2(x, C) \geq 0$ for $x \geq G_2^0$. The reason for $G_2(\frac{1}{2}, C) \geq 0$ can be revealed from taking derivatives, i.e., 
	\begin{align*}
			& \frac{\mathrm{d} G_2(\frac{1}{2}, C)}{\mathrm{d} C} = \frac{1}{C} + 3 \log(2) - 2 \log \left( \left( \frac{C+1}{C} \right)^{(C + 1)} \right), \\
			& \frac{\mathrm{d}^2 G_2(\frac{1}{2}, C)}{\mathrm{d} (C)^2} = \frac{2}{C}-\frac{1}{C^2}-2 \log \left(\frac{1}{C}+1\right), \\
			& \frac{\mathrm{d}^3 G_2(\frac{1}{2}, C)}{\mathrm{d} (C)^3} = \frac{2}{C^4+C^3} > 0.
	\end{align*}
	$\frac{\mathrm{d}^3 G_2(\frac{1}{2}, C)}{\mathrm{d} (C)^3} > 0$ implies $\frac{\mathrm{d}^2 G_2(\frac{1}{2}, C)}{\mathrm{d} (C)^2}$ is increasing, which gives
	\begin{displaymath}
		\frac{\mathrm{d}^2 G_2(\frac{1}{2}, C)}{\mathrm{d} (C)^2} \leq \lim_{C \to \infty} \frac{\mathrm{d}^2 G_2(\frac{1}{2}, C)}{\mathrm{d} (C)^2} = 0.
	\end{displaymath} 
	Therefore, $\frac{\mathrm{d} G_2(\frac{1}{2}, C)}{\mathrm{d} C}$ is decreasing, 
	\begin{displaymath}
		\frac{\mathrm{d} G_2(\frac{1}{2}, C)}{\mathrm{d} C} \geq \lim_{C \to \infty} \frac{\mathrm{d} G_2(\frac{1}{2}, C)}{\mathrm{d} C} = 3 \log (2) - 2 > 0.
	\end{displaymath}
	As a result, $G_2(\frac{1}{2}, C)$ is increasing for $C > 1$ and $G_2(\frac{1}{2}, C) \geq G_2(\frac{1}{2}, 1) = 0$.
	
	Since $G_2(x, C) \leq 0$ for $x \leq G_2^0$ and $G_2(x, C) \geq 0$ for $x \geq G_2^0$, we could find $G_1(x, C)$ is decreasing on $(0, G_2^0]$ and increasing on $[G_2^0, \frac{1}{2}]$ from \cref{eq: f1fi}. On the other hand, due to $C>1$ and $\log(C) \leq C -1$,
	\begin{displaymath}
		\lim_{x \to 0} G_1(x, C) = \lim_{x \to 0}  \left((1-C)\log(2) + \log(C)+1-C \right)\log(x) = \infty.
	\end{displaymath}
	Together with
	\begin{displaymath}
		G_1(\frac{1}{2}, C) = \left(C+\log (2)\right) \log \left(\frac{1}{C}+1\right)-\log (2) (1+\log (2)) \leq 0,
	\end{displaymath}
	we could get for fixed $C$, there exists $0 < G_1^0 \leq \frac{1}{2}$, such that $G_1(x, C) \geq 0$ for $x \leq G_1^0$ and $G_1(x, C) \leq 0$ for $x \geq G_1^0$. Similar to $G_2(\frac{1}{2}, C)$, the reason for $G_1(\frac{1}{2}, C) \leq 0$ can be revealed from taking derivatives.
	\begin{align*}
			& \frac{\mathrm{d} G_1(\frac{1}{2}, C)}{\mathrm{d} C} = \log \left(\frac{1}{C}+1\right)-\frac{C+\log (2)}{C(1+C)}, \\
			& \frac{\mathrm{d}^2 G_1(\frac{1}{2}, C)}{\mathrm{d} (C)^2} = \frac{C (\log (4)-1)+\log (2)}{C^2 \left(C+1\right){}^2} > 0,
	\end{align*}
	which means $\frac{\mathrm{d} G_1(\frac{1}{2}, C)}{\mathrm{d} C}$ is increasing w.r.t. $C$. Therefore, 
	\begin{displaymath}
		\frac{\mathrm{d} G_1(\frac{1}{2}, C)}{\mathrm{d} C} \leq \lim_{C \to \infty} \frac{\mathrm{d} G_1(\frac{1}{2}, C)}{\mathrm{d} C} = 0,
	\end{displaymath}
	which implies $G_1(\frac{1}{2}, C)$ is decreasing for $C>1$. Hence, $G_1(\frac{1}{2}, C) \leq G_1(\frac{1}{2}, 1) = 0$.
	
	Using \cref{eq:ffi}, together with $G_1(x, C) \geq 0$ for $x \leq G_1^0$ and $G_1(x, C) \leq 0$ for $x \geq G_1^0$, we could get $G(x,C)$ is increasing on $(0, G_1^0]$ and then decreasing on $[G_1^0, \frac{1}{2}]$ with respect to $x$.

\subsection{Third step}
    With the notation in \cref{eq:ftog}, we would like to evaluate $G(0,C)$ and $ G(1/2, C))$ one by one.
	
	On the one hand, for $G(0, C)$, since $\log (2 C) \leq 2 \sqrt{C} - 1$ for $C \geq 1$ (which can be proved by the monotonicity of $\log (2 C) - 2\sqrt{C} +1 $), it holds that
	\begin{displaymath}
		G(0, C) = \frac{h(\frac{1}{2C}) - h(\frac{1}{2})}{h(0) - h(\frac{1}{2C})} = \frac{C (1+\log (2))}{\log \left(2 C\right)+1}-1 \geq \frac{1+\log (2)}{2}\sqrt{C} -1.
	\end{displaymath}
	Therefore, for any $C_1 \in (0,1]$, we could take $C_2 = \left( \frac{2(1+C_1)}{C_1(1+\log(2))} \right)^2$, which gives $G(0, C_2) \geq \frac{1}{C_1}$. Furthermore, it is easy to find $C_2 = \left( \frac{2}{(1+\log(2))} \right)^2 \left( \frac{1+C_1}{C_1} \right)^2 \geq \frac{16}{(1+\log(2))^2}$ since $\frac{1+C_1}{C_1} \geq 2$ for $0<C_1 \leq 1$. 
	
	On the other hand, for $G(\frac{1}{2}, C)$,
	\begin{displaymath}
		G(\frac{1}{2}, C) = \frac{h(\frac{1}{2} + \frac{1}{2C}) - h(1)}{h(\frac{1}{2}) - h(\frac{1}{2} + \frac{1}{2C})} = \frac{C+\left(C+1\right) \left( \log \left(\frac{1}{C}+1\right) -\log(2) \right)-1}{-\left(C+1\right) \log \left(\frac{1}{C}+1\right)+1+\log (2)}.
	\end{displaymath}
	Since $\left(C+1\right) \log \left(\frac{1}{C} + 1\right) \geq 1$, it holds that when $C \geq \frac{16}{(1+\log(2))^2}$, the numerator
	\begin{displaymath}
		C+\left(C+1\right) \left( \log \left(\frac{1}{C}+1\right) -\log(2) \right)-1 \geq (1 - \log(2)) C -\log(2) > 0.
	\end{displaymath}
	Then, we could utilize $\left(C+1\right) \log \left(\frac{1}{C} + 1\right) \geq 1$ in the denominator of $G(\frac{1}{2}, C)$ and get
	\begin{displaymath}
		G(\frac{1}{2}, C) \geq \frac{(1 - \log(2)) C -\log(2)}{\log (2)}.
	\end{displaymath}
	Therefore, we could take $C_2 = \max(\frac{16}{(1+\log(2))^2}, \frac{(C_1 + 1)\log (2)}{C_1(1 - \log(2))})$ to get $G(\frac{1}{2}, C_2) \geq \frac{1}{C_1}$.
	
	Combining the results of $G(0, C_2)$ and $G(\frac{1}{2}, C_2)$, we could conclude that for any $C_1 \in (0,1]$, there exists $C_2 = \max \Big( \left( \frac{2(1+C_1)}{C_1(1+\log(2))} \right)^2, \frac{(C_1 + 1)\log (2)}{C_1(1 - \log(2))} \Big)$ such that $G(0,C_2) \geq \frac{1}{C_1}$ and $G(\frac{1}{2}, C_2) \geq \frac{1}{C_1}$. In fact, for $C_1 \in (0,1]$, $ \left( \frac{2(1+C_1)}{C_1(1+\log(2))} \right)^2 \geq \frac{(C_1 + 1)\log (2)}{C_1(1 - \log(2))}$. The derivative of their difference is
	\begin{align*}
			& \frac{\mathrm{d}}{\mathrm{d}C_1} \left( \left( \frac{2(1+C_1)}{C_1(1+\log(2))} \right)^2 - \frac{(C_1 + 1)\log (2)}{C_1(1 - \log(2))} \right) \\
			= & \frac{-C_1 \left(-8+\log ^3(2)+2 \log ^2(2)+\log (512)\right)+8-8 \log (2)}{C_1^3 (\log (2)-1) (1+\log (2))^2}.
	\end{align*}
	Since $\left(-8+\log ^3(2)+2 \log ^2(2)+\log (512)\right) < 0$, the above numerator is greater than $8 - 8 \log(2)$ for $0< C_1 \leq 1$, which is positive. Combining with the negative denominator, the above derivative is negative, therefore, 
	\begin{displaymath}
		\left( \frac{2(1+C_1)}{C_1(1+\log(2))} \right)^2 - \frac{(C_1 + 1)\log (2)}{C_1(1 - \log(2))} \geq \left( \frac{4}{1+\log(2)} \right)^2 - \frac{2 \log (2)}{1 - \log(2)} > 0.
	\end{displaymath}
    As a result, $\max \Big( \left( \frac{2(1+C_1)}{C_1(1+\log(2))} \right)^2, \frac{(C_1 + 1)\log (2)}{C_1(1 - \log(2))} \Big) = \left( \frac{2(1+C_1)}{C_1(1+\log(2))} \right)^2$, and the third step is proved with $C_2 = \left( \frac{2(1+C_1)}{C_1(1+\log(2))} \right)^2$.

\section{Coefficients in Eq. \cref{eq:Boltzmann}} \label{appendix:coeff}
The values of $A_{pq}^{rs}$ are given by
\begin{equation}
    \label{eq:coeffApq}
    A_{pq}^{rs} =  \frac{1}{M^{9}} \sum_{l, h, k \in K}  \hat{B}_{M}^{\sigma}(h-k, l-k) E_{-l}(p-s) E_{-h}(q-s) E_{k}(r-s),
\end{equation}
where $K$ is defined as $K=\{ k \mid k =(k_1, k_2, k_3), -m \leq k_1, k_2, k_3 \leq m\}$ with $M = 2m+1$, and $E_k(v) = \exp(\frac{\mathbf{i}\pi}{T} k\cdot v)$ is the Fourier basis on the period $[-T, T]^3$. The kernel function $\hat{B}_{M}^{\sigma}(\cdot,\cdot)$ are defined by
\begin{displaymath}
    \hat{B}_{M}^{\sigma} (i, j):=\hat{B} (i \bmod M, j \bmod M) \sigma_M(i\bmod M)\sigma_M(j\bmod M),
\end{displaymath}
where $\bmod$ is the symmetric modulo function such that each component of $i \bmod M$ ranges from $-m$ to $m$, and $\sigma_M(i)=\tilde{\sigma}_M(i_1)\tilde{\sigma}_M(i_2)\tilde{\sigma}_M(i_3)$ where $\tilde{\sigma}_M(\beta)$ is the one-dimensional modified Jackson filter \cite{Filter2006} given by 
\begin{displaymath}
   \tilde{\sigma}_M(\beta) = \frac{(m+1 - |\beta|)\cos \left( \frac{\pi |\beta|}{m+1} \right) + \sin \left( \frac{\pi |\beta|}{m+1} \right) \cot \left( \frac{\pi}{m+1} \right)}{m+1}.
\end{displaymath}
In the example in \cref{example:boltzmann}, we adopt the kernel modes for the case of the Maxwell molecules presented in \cite{pareschi2000numerical} with
\begin{displaymath}
   \hat{B}(k,l) := \int_{0}^{1} r^{2} \operatorname{Sinc}(\xi r) \operatorname{Sinc}(\eta r) \,\mathrm{d} r  = \frac{(\xi + \eta) \sin (\xi - \eta) - (\xi - \eta) \sin (\xi + \eta) }{2 \xi \eta (\xi^2 - \eta^2)},
\end{displaymath} 
where $\xi=|k+l| \lambda \pi, \eta=|k-l| \lambda \pi$, and $\lambda = 2/(3+\sqrt{2}$). In the numerical simulation, we take $M=17$ and $T = 3/\lambda$. 
%

\bibliographystyle{siamplain}
\bibliography{references,hu_bibtex}
\end{document}